\newcommand{\PP}{{\mathbb P}}
\newcommand{\LL}{{\mathcal L}}
\newcommand{\II}{{\mathcal I}}
\newcommand{\Bs}{{\rm{Bs}}}
\newcommand{\ls}{{\mathcal{L}}}
\DeclareMathOperator{\Pic}{Pic}
\DeclareMathOperator{\Cr}{Cr}
\DeclareMathOperator{\J}{J}
\DeclareMathOperator{\sldim}{\sigma ldim}
\DeclareMathOperator{\Exc}{Exc}
\newcommand{\paper}{: \begin{it}}
\newcommand{\jour }{, \end{it}}
\newtheorem{theorem}{Theorem}[section]
\newtheorem{lemma}[theorem]{Lemma}
\newtheorem{proposition}[theorem]{Proposition}
\newtheorem{corollary}[theorem]{Corollary}
\newtheorem{conjecture}[theorem]{Conjecture}
\newtheorem{question}[theorem]{Question}
\theoremstyle{definition}
\newtheorem{definition}[theorem]{Definition}
\newtheorem{notation}[theorem]{Notation}
\newtheorem{remark}[theorem]{Remark}
\newtheorem{example}[theorem]{Example}
\theoremstyle{remark}
\numberwithin{equation}{section}
\definecolor{mygreen}{rgb}{0.13, 0.55, 0.13}
\title{Positivity of divisors on blown-up projective spaces, II}
\author{Olivia Dumitrescu}
\address{
Olivia Dumitrescu:
Central Michigan University\\
Pearce Hall 209\\
Mt. Pleasant, Michigan 48859, US\\}
\address{and Simion Stoilow Institute of Mathematics\\
Romanian Academy\\
21 Calea Grivitei Street\\
010702 Bucharest, Romania}
\email{dumit1om@cmich.edu}
\thanks{The first author is supported by the Simons Foundation travel grant 524545}
\author{Elisa Postinghel}
\address{Elisa Postinghel:
Department of Mathematical Sciences, Loughborough University, LE11 3TU, UK}
\email{E.Postinghel@lboro.ac.uk}
\keywords{globally generated divisors, abundance conjecture, F-conjecture}
\subjclass[2010]{Primary: 14C20 Secondary:  14C17, 14J70}
\begin{document}

\begin{abstract} 
We construct log resolutions of pairs on the blow-up of the projective space in an arbitrary number of general points and we discuss the semi-ampleness of the strict transforms. 
As an application we give an explicit proof that the abundance conjecture holds for an infinite family of such pairs.

For $n+2$ points, these strict transforms are F-nef divisors on the moduli space $\overline{\mathcal{M}}_{0,n+3}$
in a Kapranov's model: we show that all of them are nef.
\end{abstract}

\maketitle
{ \hypersetup{linkcolor=black} \tableofcontents}
\setcounter{section}{0}

\section*{Introduction}

 This paper studies positivity questions for divisors on   blow-ups of projective spaces of arbitrary dimension in points in  general position and along the linear cycles these points span.

 Let $X_{s,(0)}$ denote the blow-up of $\PP^n$ at $s$ points in general position.
An interesting problem is the \emph{dimensionality problem} that consists in the computation of dimension of the space of global sections of the sheaves associated to effective divisors on $X_{s,(0)}$. This is related to the \emph{polynomial interpolation problem} for homogeneous polynomials of fixed degree and multiplicity at a collection of $s$ general points.
This problem is longstanding and is related to the Nagata's conjecture and the Segre-Harbourne-Gimigliano-Hirschowitz conjecture for $n=2$, see \cite{Ciliberto} for an account, and to Fr\"oberg-Iarrobino conjectures for ideals generated by forms. 
 In \cite{bdp1} the authors solved this problem for divisors on $X_{s,(0)}$
for  $s\le n+2$ and in  \cite{bdp3} they gave a conjectural formula for the dimension 
of linear systems on  $X_{n+3,(0)}$ that takes into account the contributions given 
by the rational normal curve and the joins of its secants with linear subspaces.
In \cite{dp-pos1} further positivity properties such as basepoint freeness and, more generally, $l$-very ampleness of linear series on $X_{s,(0)}$, for arbitrary $s$, are studied: these have "large" degree with respect to the multiplicities of the points.

This manuscript analyses all other linear series, i.e. those with low degree whose general member has singularities. In particular we construct resolutions of these singularities and we prove that proper transform of divisors are basepoint free.

 Let $X_{s,(n-2)}$ denote the subsequent blow-up of $X_{s,(0)}$  along all the linear subspaces of $\PP^n$ spanned by the $s$ points in increasing dimension. Of particular interest are the cases $s=n+1$ and $s=n+2$ that correspond to the compactifications of the \emph{Losev-Manin moduli space} (see \cite{LosevManin}) and of the {\it moduli space of stable rational marked curves} $\overline{\mathcal{M}}_{0,n+3}$ (see \cite{Ka}) respectively.
In this article we pose a series of questions about vanishing cohomology of strict transforms on $X_{s,(n-2)}$  of
divisors on $X_{s,(0)}$ of which a positive answer would
imply a solution to the dimensionality problem, see Subsection \ref{section vanishing cohomology n+3}. 

We recall that a divisor $D$ on $X_{s,(0)}$ is said to be \emph{only linearly obstructed} if the dimension of the corresponding linear system equals its \emph{linear expected dimension} introduced in \cite{bdp1}. 
In \cite{bdp1,dp} it was shown that all effective as well as certain non effective divisors $D$, that are only linearly obstructed, can be birationally modified,  by blowing-up their linear base locus and subtracting the linear divisorial components; this produced divisors $\tilde{D}$, the strict transforms of $D$, whose  higher cohomology groups  vanish. An application of these vanishing theorems that will be developed in this paper is the description of {\it globally generated} and  {\it semi-ample} divisors (Theorem \ref{gg theorem Xr}).
Furthermore  we extend this result to a class of divisors on $X_{n+3,(0)}$ that can have non-linear base locus, by taking the blow-up along the joins between linear cycles and secant varieties to the unique rational normal curve of degree $n$ through the $s$ points (Theorem \ref{base point free tilde-sigma-D}).

The varieties $X_{n+3,(0)}$ were very much studied recently as a source of interesting and explicit examples of Fano type manifolds in higher dimension. 
If  $s\le n+3$,  $X_{s,(0)}$  is log Fano, hence in particular a \emph{Mori dream space} (see for example \cite{am,CT}). 
The space $X_{n+3,(0)}$ is of particular interest because it is  the
 {\it moduli space of parabolic vector bundles}  of rank $2$ over $\PP^1$ (see \cite{bau}, \cite{Mu,Moon}). 
The Cox ring $X_{n+3,(0)}$, end therefore extremal rays of the effective cone of divisors, was given in \cite{CT}.
Equations for the facets of the effective cone and the movable cone of divisors
 were computed by the two authors of this manuscript together with Brambilla in  \cite{bdp3}.
These were found based on a \emph{base locus lemma} for  joins between linear subspaces 
spanned by the points and the {\it secant varieties} of the unique \emph{rational normal curve} of degree $n$ determined by the collection of 
$n+3$ points (Lemma \ref{base locus lemma II}). 
For $n$ even, $X_{n+3,(0)}$ is isomorphic in codimension one to the 
$n$-dimensional Fano variety of $(\frac{n}{2}-1)$-planes 
in a smooth complete intersection of two quadrics in $\PP^{n+2}$,
studied recently in \cite{AC}. 
Araujo and Massarenti \cite{am} recently gave an explicit log Fano structure to the blow-up of $\PP^n$ 
in up to $n+3$ points in general position;  they do so by studying the blow-up $X^\sigma_{n+3,(n-2)}$
of $\PP^n$ along the joins mentioned above.

In this article, we prove a number of results in birational geometry.

The \emph{log abundance} conjecture is one of the main open questions in the birational classification of higher dimensional algebraic varieties. 
It predicts that for every pair given by a projective variety $X$ and an effective divisor $\Delta$ on $X$ with log canonical singularities, if the adjoint bundle $K_X+\Delta$  is nef, then it is semi-ample (cf. Conjecture \ref{abundance}). It is known to hold for surfaces and threefolds (see \cite{KMM}) and for fourfolds with
positive Kodaira dimension (see \cite{Fk}). Moreover, it was recently proved to hold for rationally connected varieties (see 
\cite{gon} and \cite{HMX}), otherwise very little is known in higher dimension.
A precise classification of divisors   on the blow-up of $\PP^n$ in collections of points in general position, for which abundance holds, is not easily deduced from the work contained in these references.
In this paper we construct explicitly infinite families of log pairs $(X_{s,(0)},D)$, with $D$ effective, for which  abundance holds, see Theorems \ref{abundance Q} and \ref{abundance Q n+3}.

As explained in \cite[Subsection 6.3]{bdp1}, the {\it F-conjecture} predicting the nef 
cone of $\overline{\mathcal{M}}_{0,n}$ was the original motivation for the study  of the vanishing theorems. 
 A divisor on $\overline{\mathcal{M}}_{0,n}$ intersecting non-negatively the one
dimensional strata of $\overline{\mathcal{M}}_{0,n}$ is called $F$-nef. The F-conjecture proposed by Fulton, states that a 
divisor on $\overline{\mathcal{M}}_{0,n}$ is nef if and only if is F-nef.

 As noted above, the iterated blow-up $X_{n+2,(n-2)}$ of the projective space is identified  with the moduli space $\overline{\mathcal{M}}_{0,n+3}$.
Using vanishing theorems we prove that the conjecture holds for strict transforms of
linear systems in $\PP^n$ interpolating multiple points in general position, under the blow-up
 of their linear base locus, Theorem \ref{fulton nef}.

This paper is organized as follows.

In part \ref{part 1} we study properties of divisors on $X_{s,(0)}$, that have only linear base locus, and of their strict transforms in further blow-ups $X_{s,(n-2)}$. In Part \ref{part 2} we study all effective divisors on $X_{n+3,(0)}$, that contain linear and non-liner obstructions, and we study properties of their strict transform in $X^\sigma_{n+3,(n-2)}$, the iterated blow-up of all cycles of the base locus. 

In Section \ref{notations} we introduce the
 general construction, notation and some preliminary facts. 

Section \ref{gg section} studies semi-ampleness of divisors on 
$X_{s,(n-2)}$, the main result is Theorem \ref{gg theorem Xr}. 
Subsection \ref{vanishing question} contains a complete description of the base locus of only linearly obstructed linear systems in $\PP^n$ interpolating multiple points, Theorem \ref{base locus theorem}.

In Sections \ref{abundance section}  we prove that the (log) abundance conjecture
 holds for log pairs on $X_{s,(0)}$ that have only linear base locus, by taking  log resolutions  $X_{s,(n-2)}$, Theorem \ref{abundance Q}
 
In Section \ref{fulton}, as an application of the results contained in Section \ref{gg section}, 
we establish the F-conjecture  for a particular class of divisors, Theorem \ref{fulton nef}.

In Section \ref{notation n+3} we construct log resolutions $X^\sigma_{n+3,(n-2)}$ of effective divisors on $X_{n+3,(0)}$. 
In Subsection \ref{section vanishing cohomology n+3} we pose a number of questions 
about the cohomology of the strict transform of these divisors.

In  Section \ref{section abundance n+3} we prove that log abundance conjecture holds for pairs on $X_{n+3,(0)}$ that are not only linearly obstructed, by taking log resolutions  $X^\sigma_{n+3,(n-2)}$.


 \part[I]{Linearly obstructed divisors}\label{part 1}

\section{Notation, known results and conjectures}\label{notations}

Let $K$ be an algebraically closed field of characteristic zero.
 Let $\mathcal{S}=\{p_1,\dots,p_s\}$ be a collection of $s$ distinct points in $\PP^n_K$ and
let $S$ be the set of indices parametrizing $\mathcal{S}$, with $|S|=s$.
\begin{notation} (Interpolation problems)
Let \begin{equation}\label{linear system}
\ls:=\ls_{n,d}(m_1,\dots,m_s)
\end{equation}
denote the linear system of degree-$d$ hypersurfaces of $\PP^n$ with multiplicity at least $m_i$ at $p_i$, for $i=1,\dots,s$.
\end{notation}

\begin{notation}\label{blow-up in points}
 We denote by $X_{s,(0)}$ the blow-up of $\PP^n$ in the points $\mathcal{S}$ and by $E_i$ the exceptional divisor of $p_i$, for all $i$.
 The index $(0)$ indicates that the space $\PP^n$ is blown-up in $0$-dimensional cycles. The Picard group of $X_{s,(0)}$ is spanned by the class of a general hyperplane, $H$, and the classes of the
 exceptional divisors $E_{i}$, $i=1,\dots,s$. 
 \end{notation}

\begin{notation}\label{generality definition}
Fix positive integers $d, m_1, \ldots, m_s$ and define the following divisor on $X_{s,(0)}$:
  \begin{equation}\label{def D}
  dH-\sum_{i=1}^{s} m_i E_i\in\textrm{Pic}\left(X_{s,(0)}\right).
  \end{equation}
Let $D$ be a \textit{general divisor} in $|dH-\sum_{i=1}^{s} m_i E_i|$. 

For every $I\subset\mathcal{S}$, we set $|I|=:\rho+1$ and we define the integer
\begin{equation}\label{mult k} 
k_{I}=k_I(D):=\max \{0,m_{i_1}+\cdots+m_{i_{\rho+1}}-\rho d\}.
\end{equation}
that computes the multiplicity of containment of $L_I$  in $\Bs(|D|)$, see
{\cite[Proposition 4.2]{dp}}.
  \end{notation}

\begin{notation}\label{higher blow ups} 
For $-1\le r\leq \min\{s,n\}-1$, we denote by  $X_{s,(r)}$ the iterated
 blow-up of $\PP^n$ along the strict transform of the linear cycles $L_I$ 
of dimension less than or equal to $r$ spanned by sets of points $I\subset\mathcal{S}$, 
$|I|\le r+1$, with  $k_{I}>0$, ordered by increasing dimension. 
We further denote by $E_I$ the (strict transform of) the exceptional divisor in $X_{s,(r)}$ of the linear space $L_I$, for every such an $I$.

When $r=0$,  $X_{s,(0)}$ is the blow-up introduced in Notation \ref{blow-up in points}. If $r=-1$ then we adopt the convention
$X_{s,(-1)}:=\mathbb{P}^n$.
\end{notation}

We point out that if $r\ge 1$ the space $X_{s,(r)}$ depends on the divisor $D$, precisely on the integers 
$k_I$, but for the sake of simplicity we decided to omit this dependency from the notation.
If  $D$ is an effective divisor on $X_{s,(0)}$, the birational morphism $$X_{s,(r)}\rightarrow X_{s,(0)},$$ obtained as composition of blow-ups,
resolves the linear base locus of $D$
(see \cite[Section 4]{bdp1} and \cite[Section 2]{dp}).

\begin{remark}\label{linear cycles}
Abusing notation, we will denote by $L_I$ the linear subspace of $X_{s,(-1)}$ 
spanned by the set of points parametrized by $I$ as well as its strict transform in the
 blown-up spaces $X_{s,(r)}$, for every $r\ge0$. 
\begin{enumerate}
\item  For $|I|\geq r+2$, $L_I$ on $X_{s,(r)}$ represents a blown-up projective space of dimension $|I|-1$ along linear cycles of dimension at most $r$. 
\item If $|I|\leq r+1$, the strict transform of $L_I$ will be the exceptional divisor $E_I$ that is a product of blown-up projective spaces. The full description of this space and its intersection theory is explicitly given in \cite[Section 2]{dp}.
\end{enumerate} 
\end{remark}

\begin{notation}\label{D_r}
Denote by $D_{(r)}$ the strict transform of the divisor $D$ on $X_{s, (r)}$. One has
\begin{equation}\label{strict transform}
D_{(r)}:=dH - \sum_{\substack{I\subset\{1,\dots,s\}:\\0\le |I|\le r+1}} k_{I} E_{I},
\end{equation}
where the integers $k_I$ are defined in \eqref{mult k}.
\end{notation}

\begin{remark}\label{remark divisorial subtraction}
If $r=n-1$, then $D_{(n-1)}$ is the strict transform of  $D_{(n-2)}$ via $X_{s,(n-2)}\dashrightarrow X_{s,(n-2)}$, that is the divisor on $X_{s,(n-2)}$ obtained from $D_{(n-2)}$  by subtracting $k_{I(n-1)}$ times the  strict transform of the fixed 
hyperplane spanned by the points parametrized by $I(n-1)$, for any $I(n-1)\subset\{1,\dots,s\}$, namely those that are fixed components 
of $D_{(n-2)}$.
\end{remark}

\begin{notation}\label{bar r}
Set $\bar{r}$ to be the maximal dimension of the linear cycles of the
 base locus of $D$. We will also use $\tilde{D}:=D_{(\bar{r})}$ to denote the strict transform of $D$ under the blow-up of all its linear base locus (including subtraction of
hyperplanes for $\bar{r}=n-1$, as in Remark \ref{remark divisorial subtraction}). 
 \end{notation}

 \subsection{The cohomology of divisors on $X_{s,(r)}$}

In this section we discuss the cohomology spaces $H^i(X_{s,(r)},\mathcal{O}_{X_{s,(r)}}(D_{(r)}))$.
 To simplify notation we will abbreviate $\dim H^i(X_{s,(r)},\mathcal{O}_{X_{s,(r)}}(D_{(r)}))$ by $h^i(D_{(r)})$. 

 Let $s(d)$ denote the number of points with multiplicity $d$ of $D$, namely the number of $i$'s such that $m_i=d$ in \eqref{def D}.

\begin{theorem}[{\cite[Theorem 5.3]{bdp1}, \cite[Theorem 5.12]{dp}}]\label{extended vanishing} Let $\mathcal{S}$ be a collection of points of $\PP^n$ in  general position. Let $D_{(r)}$ be as in Notation \ref{D_r}. 
Assume that
\begin{equation}\label{cond vanishing}
\begin{split}
&0\leq m_i \ \forall i\in\{1,\dots,s\},\\
& m_i+m_j\leq d + 1, \ \forall i,j\in\{1,\dots,s\},\ i\ne j\ (\textrm{if } s\ge 2)\\ 
&\sum_{i=1}^{s} m_i\le nd+\left\{ \begin{array}{ll}
n & \textrm{ if } s\le n+1 \textrm{ and } d\ge2\\
1 & \textrm{ if } s\le n+1 \textrm{ and } d=1\\
1 & \textrm{ if } s= n+2\\
\min\{n-s(d),s-n-2\} & \textrm{ if }  s\ge n+3\end{array}\right..
\end{split}
\end{equation}
Then  $h^i(D_{(r)})=0$, for every $i\ne 0,r+1$.
Moreover, $h^i(\tilde{D})=0$ for every $i\geq 1$.
\end{theorem}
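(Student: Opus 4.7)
The strategy is induction on $r$, with the base case $r=-1$ reducing to the vanishing $h^i(\mathbb{P}^n, \mathcal{O}(d)) = 0$ for $i > 0$, which is Bott vanishing for $d \geq 0$ (guaranteed by the first condition in (\ref{cond vanishing})). For the inductive step, I would decompose the birational morphism $X_{s,(r)} \to X_{s,(r-1)}$ into a sequence of blow-ups, one for each linear cycle $\tilde{L}_J$ with $|J|=r+1$ and $k_J>0$, and at each such blow-up iteratively peel off $E_J$ using the short exact sequence
\[
0 \to \mathcal{O}_{Y'}(D' - (j+1)E_J) \to \mathcal{O}_{Y'}(D' - jE_J) \to \mathcal{O}_{E_J}\!\bigl((D' - jE_J)|_{E_J}\bigr) \to 0,
\]
for $0 \le j < k_J$, together with the induced long exact sequence in cohomology.

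The heart of the argument lies in computing $h^i$ of the restriction $(D' - jE_J)|_{E_J}$. The exceptional divisor $E_J$ is a projective bundle of relative dimension $n-r-1$ over the strict transform of $L_J$ in the intermediate blow-up, and this strict transform is itself a blow-up of $\mathbb{P}^r$ along linear cycles of dimension less than $r$. Via the Leray spectral sequence for the bundle projection $E_J \to \tilde{L}_J$, the cohomology in question reduces to cohomology on a blow-up of $\mathbb{P}^r$ of a divisor essentially obtained by restricting $D$ to $L_J$; this is a lower-dimensional instance of the same statement, to be handled by an auxiliary induction on $n$. One must verify that the numerical conditions in (\ref{cond vanishing}) restrict consistently to the linear system $\mathcal{L}_{r,d}((m_j)_{j \in J})$, which follows from the definition of $k_J$ together with the ordering of the blow-ups by increasing dimension.

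The main obstacle is combinatorial and numerical: after iterating this procedure over all cycles $J$ and all multiplicities $j < k_J$, one must correctly track which cohomology groups can survive and show that they are concentrated in degree $r+1$. This depends delicately on the ordering of the blow-ups by increasing dimension, and on the fact that the normal bundle of the strict transform of $L_J$ in $X_{s,(r-1)}$ has already been modified by the earlier blow-ups of lower-dimensional subcycles, which affects the Chern classes entering the pushforward $(\pi_J)_*\mathcal{O}(-jE_J)$. For the \emph{moreover} statement $h^i(\tilde{D}) = 0$ for all $i \geq 1$, one takes $r = \bar{r}$ and observes that the potentially surviving cohomology $h^{\bar{r}+1}$ is supported on the linear base components of top dimension; these are precisely removed at the final stage, including the subtraction of fixed hyperplanes when $\bar{r} = n-1$ as in Remark \ref{remark divisorial subtraction}, which kills the last potentially nonzero group and yields the full higher vanishing.
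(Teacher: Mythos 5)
First, a point of reference: this paper does not prove Theorem \ref{extended vanishing} at all --- it is quoted from \cite[Theorem 5.3]{bdp1} and \cite[Theorem 5.12]{dp} --- so your sketch can only be measured against the arguments there and against the same machinery as it reappears in this paper (Lemma \ref{vanishing prelim} and the proof of Theorem \ref{gg theorem Xr}). Your overall skeleton is the right one: induction, peeling off the exceptional divisors $E_J$ one multiplicity at a time via the displayed short exact sequences, and reducing to cohomology on the $E_J$'s. The treatment of the ``moreover'' clause is also correct in spirit: for $r=\bar r$ there are no $(\bar r+1)$-dimensional cycles with $k_I>0$, so the source of the surviving $h^{\bar r+1}$ disappears.

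The genuine gap is in the key step, the identification of $(D'-jE_J)|_{E_J}$. You assert that, via Leray for the bundle $E_J\to \tilde L_J$, the restriction is ``essentially the divisor obtained by restricting $D$ to $L_J$,'' i.e.\ a lower-dimensional instance of the same statement for $\mathcal{L}_{r,d}((m_j)_{j\in J})$. That is not what happens, and it cannot be: a lower-dimensional instance of the same statement has cohomology in degrees $0$ and $1$ only (plus one exceptional degree), and no bookkeeping of such terms can produce a contribution concentrated precisely in degree $r+1$. In fact $E_J$ is a \emph{product} of blown-up projective spaces of dimensions $|J|-1$ and $n-|J|$; the restriction of the strict transform itself is trivial on the first factor and equals a new divisor $F=k_Jh-\sum k_{J'}e_{J'|J}$ (built from the base-locus multiplicities $k$, not from the $m_i$) on the second factor --- this is exactly claims (1) and (2) in the proof of Lemma \ref{vanishing prelim} --- while each unit of $E_J$ that is peeled off contributes a multiple of the Cremona class $\Cr(h)$ on the \emph{first} factor, cf.\ $E_J|_{E_J}=(-\Cr(h),*)$ in the proof of Theorem \ref{gg theorem Xr}. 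The non-vanishing cohomology of these Cremona-type classes sits in the top degree $|J|-1$ of that factor, and by K\"unneth this is precisely the mechanism forcing the surviving group into degree $r+1$; its vanishing range is a separate theorem (\cite[Theorem 3.1]{dp}), not an instance of the statement being proved. Relatedly, your argument never actually invokes the hypotheses \eqref{cond vanishing}: the bounds $m_i+m_j\le d+1$ and $\sum_i m_i\le nd+\{\cdots\}$ are exactly what guarantees that the induced divisors $F$ on the fiber factors again satisfy the hypotheses of the induction and, for $s\ge n+3$, that the rational normal curve and its secant joins do not enter the base locus and contribute extra cohomology. A proof in which these inequalities play no role cannot be complete, since the conclusion fails without them.
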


Theorem \ref{extended vanishing} states that if $D$ is a \emph{special} divisor on $X_{s,(0)}$, i.e. one for which the dimension of the 
first cohomology group does not vanish, then as long as it satisfies the  bounds on the coefficients
 \eqref{cond vanishing}, its  strict transform $\tilde{D}$ obtained after resolving the 
linear base locus is no more special, i.e.  it has vanishing higher cohomology groups.
In \cite{bdp1} the following question was posed, namely whether a similar statement   is true for all cycles $-$linear and not linear$-$ of $\Bs(|D|)$.

\begin{question}[{\cite[Question 1.1]{bdp1}}]\label{question}
Consider any effective divisor $D$ in the blown-up $\PP^{n}$ at general points. 
Let $\widetilde{X}$ be the smooth composition of blow-ups of $\PP^n$ along the 
(strict transforms of the) 
cycles of the base locus of $|D|$, ordered in increasing dimension. 
Denote by $\widetilde{{\mathcal{D}}}$ the strict transform of the general divisor of $\LL$  in $\widetilde{X}$.
  Does $h^i(\widetilde{X},\mathcal{O}_{\widetilde{X}}(\widetilde{\mathcal{D}}))$ vanish for all $i\ge 1$?
\end{question}

Notice that we adopt the notation  $\tilde{D}$ for the strict transform after 
the blow-up of the linear base locus and $\widetilde{\mathcal{D}}$
 for the strict transform after the blow-up of the whole base locus.  
An affirmative answer to Question \ref{question} would imply that 
$h^0(D)=\chi(\widetilde{X}, \mathcal{O}_{\widetilde{X}}(\widetilde{\mathcal{D}}))$, the Euler characteristic.
 This would give the dimension of the linear system $\ls$, answering the corresponding interpolation problem. 

We also pose the following slightly different question. 
\begin{question}\label{question2}
Let $X$ be  the blown-up $\PP^{n}$ at general points and let $D$ be an effective divisor on $X$. 
Let $(\widetilde{X},\widetilde{D})$ be obtained via a log resolution of the pair $(X,D)$.
   Does $h^i(\widetilde{X},\mathcal{O}_{\widetilde{X}}(\widetilde{\mathcal{D}}))$ vanish for all $i\ge 1$?
\end{question}

In Section \ref{gg section} for $s\leq n+2$ and $D$ effective, or for $s\geq n+3$ and $D$ effective having only linear base locus, Theorem \ref{question answer} establishes that $\widetilde{\mathcal{D}}$ and $\tilde{D}$ coincide, answering positively Question \ref{question} and Question \ref{question2}.


\section{Globally generated divisors on $X_{s,(r)}$}\label{gg section}

Recall that $D$ denotes a divisor on $X_{s,(0)}$ of the form \eqref{def D} and $D_{(r)}$, as in \eqref{strict transform}, is its strict transform in $X_{s,(r)}$. See Notations \ref{generality definition}, \ref{D_r} and \ref{bar r} .
If $\bar{r}$ is the dimension of the linear base locus of $D$, then we set $\tilde{D}=D_{(\bar{r})}$.

In this section we establish  the following result, that classifies globally generated divisors of the form $D_{(r)}$, for every $r\ge 0$.

For any $s\geq n+3$  we introduce the following integer (see also \cite[Theorem 2.2]{dp-pos1}):
\begin{equation}\label{b for l-va}
\begin{split}
& b_0:=\left\{ \begin{array}{ll}
\min\{n-1, s-n-2\}-1 & \textrm{ if } m_1=d-1 \textrm{ and }m_i= 1, i\ge 2 \\
 \min\{n,s-n-2\}-1 & \textrm{otherwise,}\\\end{array}\right.
\end{split}
\end{equation}
while for $s\leq n+2$, we define $b_0:=-1$

\begin{theorem}\label{gg theorem Xr}
Assume that $\mathcal{S}\subset\PP^n$ is a collection of points in  general position.
 Assume that 
 $s\le n+1$ or that $s\ge n+2$ and 
 $d$ is large enough, namely 
\begin{equation}\label{gg equations Xr large s}
\sum_{i=1}^s m_i -nd \le  b_0.
\end{equation}
Then for any $0\leq r\leq n-1$ the divisor $D_{(r)}$ on $X_{s,(r)}$ is globally generated  if and only if

\begin{equation}\label{gg equations Xr}
\begin{split}
&0\le m_i \le d, \ \forall i\in\{1,\dots,s\},\\  
&0\le (r+1)d-\sum_{i\in I} m_i,\ \forall I\subseteq\{1,\dots,s\},\ |I|=r+2.\\  
\end{split}
\end{equation}
\end{theorem}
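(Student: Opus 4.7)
The plan is to prove both implications of Theorem \ref{gg theorem Xr} by leveraging the base locus description in Theorem \ref{base locus theorem}. For necessity, both inequalities in \eqref{gg equations Xr} reflect clear geometric obstructions. The bound $m_i \le d$ is needed for $|D|$ to be non-empty: a polynomial of degree $d$ vanishing to order greater than $d$ at a point must be identically zero, so $m_i > d$ would force $H^0(X_{s,(r)}, D_{(r)}) = 0$, incompatible with global generation. For the second inequality, suppose $\sum_{i \in I} m_i > (r + 1) d$ for some $I$ with $|I| = r + 2$; then $k_I > 0$, and by \cite[Proposition 4.2]{dp} the cycle $L_I$, of dimension $r + 1$, lies in $\Bs(|D|)$ with multiplicity $k_I$. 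Since $X_{s,(r)}$ blows up only linear cycles of dimension at most $r$ (Notation \ref{higher blow ups}), the strict transform of $L_I$ in $X_{s,(r)}$ remains a positive-dimensional subscheme of $\Bs(|D_{(r)}|)$, contradicting global generation.

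For sufficiency, first observe that under \eqref{gg equations Xr} one has $\bar{r} \le r$. For any $J \subseteq \{1,\ldots,s\}$ with $|J| \ge r + 3$, pick any $I \subset J$ of size $r + 2$; using $\sum_{i \in I} m_i \le (r + 1) d$ and $m_j \le d$ for $j \in J \setminus I$, both from \eqref{gg equations Xr}, one finds
$$\sum_{j \in J} m_j \le (r + 1) d + (|J| - r - 2) d = (|J| - 1) d,$$
so $k_J = 0$. Hence every positive-dimensional linear cycle in the base locus of $|D|$ has dimension at most $r$, which forces $X_{s,(r)} = X_{s,(\bar{r})}$ and $D_{(r)} = \tilde{D}$. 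Theorem \ref{base locus theorem} then applies: under the hypothesis ``$s \le n + 1$, or $s \ge n + 2$ and \eqref{gg equations Xr large s}'', it asserts that $\Bs(|D|)$ is exactly the union of the linear cycles $L_I$ with $k_I > 0$, each appearing with its prescribed multiplicity $k_I$. Blowing up precisely these cycles, as is carried out in $X_{s,(\bar{r})}$, therefore resolves $\Bs(|D|)$ completely, giving $\Bs(|\tilde{D}|) = \emptyset$; equivalently, $D_{(r)} = \tilde{D}$ is globally generated.

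The main obstacle is the appeal to Theorem \ref{base locus theorem}: the hypothesis restricting $(s, d)$ is indispensable here, since for $s \ge n + 3$ outside the range \eqref{gg equations Xr large s} the base locus of $|D|$ can acquire non-linear components, notably the rational normal curve through the $s$ points and its secant varieties, which are treated only in Part \ref{part 2}. Granted the base locus description, the remainder of the argument is a clean combinatorial match between the inequalities \eqref{gg equations Xr} and the data of the cycles resolved by $X_{s,(r)}$.
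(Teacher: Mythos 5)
Your sufficiency argument is circular. You invoke Theorem \ref{base locus theorem} to conclude that $\Bs(|D|)$ is \emph{exactly} the union of the linear cycles $L_I$ with $k_I>0$, but in the paper that theorem is proved \emph{as a consequence of} Theorem \ref{gg theorem Xr}: its proof reads ``By Theorem \ref{gg theorem Xr}, the strict transform $D_{(r)}$ of an element of $\ls$ is base point free as soon as no higher dimensional cycle \dots is contained in the base locus.'' What is available independently is only Lemma \ref{base locus lemma II} (equivalently \cite[Lemma 4.1]{bdp3}), which gives the \emph{containment} of each $L_I$ with its multiplicity $k_I$; it does not assert that nothing else, and no infinitesimal data, sits in the base locus. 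That assertion is precisely the hard content of the theorem you are trying to prove, so your ``clean combinatorial match'' assumes the conclusion.

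Even granting the set-theoretic identification of $\Bs(|D|)$ downstairs, the step ``blowing up these cycles resolves the base locus completely, hence $\Bs(|\tilde{D}|)=\emptyset$'' does not follow: after blowing up, base points can survive as \emph{infinitely near} points on the exceptional divisors $E_I$. The paper's proof is built around exactly this issue: it shows $h^1(D_{(r)}\otimes\II_q)=0$ for every point $q$ of $X_{s,(r)}$, treating separately the case $q\in E_I$ (via Lemma \ref{vanishing prelim}, the restriction of $D_{(r)}$ to the product structure of $E_I$, Cremona classes, and induction on $n$) and the case where $q$ lies off the exceptional locus (via restriction to suitable hyperplanes and the vanishing theorems of \cite{bdp1,dp}). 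Your proposal has no substitute for this cohomological machinery. The necessity direction and the computation showing $k_J=0$ for $|J|\ge r+3$ are fine (though you omit the case $m_i\le -1$, where $E_i$ itself becomes a fixed component), but the core of the theorem is missing.
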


\begin{remark}
Notice that if $s\le \left\lfloor\frac{r+2}{r+1}n\right\rfloor$, then
 the condition on the degree \eqref{gg equations Xr large s} is always satisfied. 
Hence in this range, Theorem \ref{gg theorem Xr} provides 
a complete classification of divisors $D$ on $X_{s,(0)}$ whose strict transform $D_{(r)}$  in $X_{s,(0)}$ is  globally generated.
\end{remark}

\begin{remark}\label{gg for Dn-1}
If $\bar{r}=n-1$, then $\tilde{D}$ is the strict transform of  $D_{(n-2)}$ 
obtained by subtraction of strict transforms of hyperplanes $L_{I(n)}\subset\PP^n$, 
$X_{s,(n-2)}\dashrightarrow X_{s,(n-2)}$, cfr. Remark \ref{remark divisorial subtraction}.
Therefore in the same assumptions of Theorem \ref{gg theorem Xr}, we have that $\tilde{D}$
is globally generated if and only if
\begin{equation}\label{gg equations Xn-1}
\begin{split}
&0\le m_i \le d, \ \forall i\in\{1,\dots,s\},\\  
&0\le nd-\sum_{i\in I} m_i,\ \forall I\subseteq\{1,\dots,s\},\ |I|=n+1.\\  
\end{split}
\end{equation}
\end{remark}

\vskip.3cm 

In order to prove the main result of this section, Theorem \ref{gg theorem Xr}, we need the following result. 

\begin{lemma}\label{vanishing prelim} 
In the above notation, assume 
\begin{equation}\label{gg equations Xr large}
\begin{split}
&\sum_{i=1}^{s} m_i\le nd+\left\{ \begin{array}{ll}
0 & \textrm{ if } s\le n+2,\\
\min\{n-s(d),s-n-2\} & \textrm{ if }  s\ge n+3.\end{array}\right. 
\end{split}
\end{equation}
Then for every exceptional divisor $E_I$ and any infinitely near point $q\in E_I$, we have
$$h^1(D_{(r)}\otimes\II_q)\leq h^1(D_{(r)}-E_I).$$
\end{lemma}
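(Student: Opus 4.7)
The plan is to reduce the comparison of the two $h^1$'s to a vanishing statement on the exceptional divisor $E_I$ itself. Since $q\in E_I$, one has an inclusion of ideal sheaves $\OO(-E_I)=\II_{E_I}\hookrightarrow \II_q$, and twisting by $\OO(D_{(r)})$ yields the short exact sequence
\begin{equation*}
0\to \OO(D_{(r)}-E_I)\to \OO(D_{(r)})\otimes\II_q\to \OO(D_{(r)})|_{E_I}\otimes\II_{q,E_I}\to 0,
\end{equation*}
where $\II_{q,E_I}$ denotes the ideal sheaf of $q$ as a point of $E_I$. From the associated long exact sequence, the desired inequality $h^1(D_{(r)}\otimes\II_q)\le h^1(D_{(r)}-E_I)$ follows at once as soon as one establishes the vanishing
\begin{equation*}
H^1\bigl(E_I,\;\OO(D_{(r)})|_{E_I}\otimes\II_{q,E_I}\bigr)=0.
\end{equation*}

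To prove this vanishing I would use the point-residue sequence $0\to \OO(D_{(r)})|_{E_I}\otimes\II_{q,E_I}\to \OO(D_{(r)})|_{E_I}\to \OO_q\to 0$ on $E_I$. Its cohomology sequence shows that the required vanishing is equivalent to the conjunction of two statements: first, $H^1(E_I,\OO(D_{(r)})|_{E_I})=0$, and second, the restriction $\OO(D_{(r)})|_{E_I}$ has a global section not vanishing at $q$. By Remark \ref{linear cycles} and the explicit description in \cite[Section 2]{dp}, the exceptional divisor $E_I$ is a product of blown-up projective spaces at general points, and the restriction $D_{(r)}|_{E_I}$ decomposes as a sum of pullbacks from the factors. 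By the K\"unneth formula both items then reduce to the vanishing of the first cohomology group, and to global generation at the projection of $q$, for certain linearly obstructed divisors on the factors, to which Theorem \ref{extended vanishing} and the already established global generation results in lower dimension apply.

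The main obstacle will be the bookkeeping of the restriction of $D_{(r)}$ to $E_I$: using the intersection theory of \cite[Section 2]{dp} one must write $D_{(r)}|_{E_I}$ explicitly, tracking the contributions of all exceptional divisors $E_J$ with $J\supsetneq I$ (which meet $E_I$ along further exceptional loci) and $J\subsetneq I$ (which appear through the iterated blow-up structure on $E_I$), and then verify that the bound \eqref{gg equations Xr large} on $\sum m_i-nd$ translates into the coefficient bounds \eqref{cond vanishing} required to invoke Theorem \ref{extended vanishing} on each K\"unneth factor. The fact that \eqref{gg equations Xr large} is strictly tighter than \eqref{cond vanishing} (namely the summand $0$ in place of $n$ or $1$ for $s\le n+2$, and the same min for $s\ge n+3$) is precisely the slack consumed by passing to the restriction along $E_I$, so the numerics must match exactly, which is the technical heart of the argument.
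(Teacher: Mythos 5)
Your proposal follows essentially the same route as the paper: the same short exact sequence with kernel $D_{(r)}-E_I$, the same reduction to the vanishing of $H^1$ of $\mathcal{O}(D_{(r)})\otimes\II_q$ restricted to $E_I$, and the same use of the product structure of $E_I$ to identify that restriction with a divisor of the same type on a lower-dimensional blown-up projective space, with the preservation of the numerical bound \eqref{gg equations Xr large} as the technical core. The only (minor) deviation is that you split the twisted vanishing on $E_I$ via the point-residue sequence into $h^1=0$ plus global generation at the point, whereas the paper keeps the ideal-sheaf twist intact and closes the argument by induction on $n$; both devices consume the same inductive input.
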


\begin{proof}
We prove the statement by induction on $n$. We refer to \cite[Theorem 2.11]{dp-pos1} for the proof of the statement for $n=2$.
 We will assume $n\ge 3$. Consider the short exact sequence
\begin{equation}\label{sequence}
0\stackrel{}{\rightarrow}D_{(r)}-E_I\stackrel{}{\rightarrow}D_{(r)}\otimes\II_q\stackrel{}{\rightarrow}D_{(r)}\otimes\II_q|_{E_I}\stackrel{}{\rightarrow}0.
\end{equation}
We claim that 
\begin{equation}\label{claim 0}
h^1(D_{(r)}\otimes\II_q|_{E_I})=0.
\end{equation} 
The statement will follow from this, by looking at the long exact sequence in cohomology associated with \eqref{sequence}.

We now prove the claim, namely that \eqref{claim 0} holds. In order to do this, we set $\rho+1:=|I|$ and
we introduce  the positive integer 
\begin{equation}\label{alpha}
\alpha(I):=|\{I(\rho+1): k_{I(\rho+1)}\geq 1,\  I(\rho+1)\supset I\}|,
\end{equation}
where $I(\rho+1)$ denotes an index set contained in $\{1,\dots, s\}$ of cardinality $\rho+2$ and the integer 
$k_{I(\rho+1)}$ is defined as in \eqref{expanded}.

We recall that $E_I$ is a product, 
whose  second factor is isomorphic to $X^{n-\rho-1}_{\alpha(I),(r-\rho-1)}$, the blown-up projective space of dimension $n-\rho-1$ along linear cycles up to dimension $r-\rho-1$ spanned by $\alpha(I)$ points. We refer to \cite[Lemma 2.5]{dp} for details.
 Moreover, let $F$ 
be a divisor on the blow-up of $\PP^{n-\rho-1}$ at $\alpha(I)$  points in linearly general position, $X^{n-\rho-1}_{ \alpha(I),(0)}$, of the following form:
\begin{equation}\label{definition F}
F:=k_{I}h -\sum_{\substack { I(\rho+1)\supset I:\\k_{I(\rho+1)}\geq 1}} k_{I(\rho+1)}\cdot e_{I(\rho+1)|{I}}.
\end{equation}
We claim that the following holds.
\begin{enumerate}
\item The restriction of $D_{(r)}$ to $E_I$ is
$D_{(r)}|_{E_I}=(0, F_{(r-\rho-1)})$, where $F_{(r-\rho-1)}$ denotes the strict transform of $F$ in $X^{n-\rho-1}_{\alpha(I),(r-\rho-1)}$.
\item If $D$ satisfies the bound \eqref{gg equations Xr large}, so does $F$.
\end{enumerate}
The proof of \eqref{claim 0} follows from these two claims. Indeed
(1) implies that
$$\mathcal{O}_{X_{(r)}}(D_{(r)})\otimes\II_q|_{E_I}\cong \mathcal{O}_{X^{n-\rho-1}_{\alpha(I),(r-\rho-1)}} (F_{(r-\rho-1)})\otimes\II_{q_b},$$
where $q=(q_b,q_f)\in E_I$. This has vanishing first cohomology group, by induction on $n$, since $F$ satisfies \eqref{gg equations Xr large}, by (2).

\vskip.3cm

We first prove (1). The fact that the first factor of the restriction $D_{(r)}|_{E_I}$ is zero follows from the computation of normal bundles of the $E_I$'s and from intersection theory on $X_{s,(r)}$, see \cite[Section 4]{bdp1}  or \cite[Section 2]{dp}.
We now compute the second factor.  We have
\begin{align*}
E_I|_{E_I}& =(*, h)\\
 E_I|_{E_{I(j)}}& =(*, e_{{J}|I}), \textrm{ for all }J\supset I,
\end{align*}
where $*$ denotes the appropriate divisor on the first factor; here
we are only interested in the second factor.
The classes of the divisors $h$ and $e_{J|I}$ generate the Picard group of $X^{n-\rho-1}_{\alpha(I),(r-\rho-1)}$.
We can compute
$$\ \ \ \ \ D_{(r)}|_{E_I}=\left(0, k_{I}h -\sum_{ J\supset I} k_{J}e_{J|{I}}\right).$$
To conclude, we need to prove that the second factor on the right hand side of the above expression equals the strict transform $F_{(r-i-1)}$.
Notice first of all that for any integer $\tau\ge 1$, one has $|I(\tau+\rho+1)\setminus I|=\tau+1$. Hence it is enough to prove that 
$$\sum_{\substack {I(\rho+1)\supset I:\\
I(\rho+1)\subset I(\tau+\rho+1)}} k_{I(\rho+1)}-\tau k_I=k_{I(\tau+\rho+1)}.$$
By definition, the left hand side equals
\begin{equation*}\label{}
\begin{split}
&\sum_{\substack {I\subset I(\rho+1)\subset I(\tau+\rho+1)}} k_I+\sum_{j\in I(\tau+\rho+1)\setminus I}m_j-(\tau+1)d -\tau k_I=\\
&=(1+\tau)k_I+\sum_{j\in I(\tau+\rho+1)\setminus I}m_j-(\tau+1)d-\tau k_I\\
&=k_I+\sum_{j\in I(\tau+\rho+1)\setminus I}m_j-(\tau+1)d\\
&=k_{I(\tau+\rho+1)}.\\
\end{split}
\end{equation*}

\vskip.3cm 

We now  prove (2). We shall assume that $m_i\ge1$ for all $i\in \mathcal{S}$.
Notice  that for any index set $I(\rho+1)=I\cup\{j\}$, with $k_{I\cup \{j\}}\ge1$, we can write 
\begin{equation}\label{eq kI}k_{I(\rho+1)}-k_{I}=m_j-d.\end{equation}
Set  $A(I)=\{j\in S\setminus I: k_{I\cup \{j\}}\ge 1\}$. We have 
\begin{equation}\label{eq alpha}
\alpha:=|A(I)|=\alpha(I)\le s-\rho-1.
\end{equation}
We denote by $s_F(k_I)$ the number of sets $I(\rho+1)=I\cup\{j\}$ such that $k_{I(\rho+1)}=k_{I}$.
The divisor $F$ satisfies \eqref{gg equations Xr large} if and only if 
\begin{equation}\label{ineq for 2}
\begin{split}
\sum_{ j\in A(I)}  k_{I\cup\{j\}}\le &(n-\rho-1)k_I \\ 
  &+\left\{ \begin{array}{ll}
0 & \textrm{if } \alpha\le n-\rho+1,\\
\min\{n-\rho-1-s_F(k_I),\alpha-n+\rho-1\} & \textrm{if }  \alpha\ge n-\rho+2.\end{array} \right. \\
\end{split}
\end{equation}
Using \eqref{eq kI}, we can see that \eqref{ineq for 2} is equivalent to 
\begin{equation}\label{ineq for 2, 2}
\begin{split}
\sum_{ j\in A(I)}  m_j-\alpha d\le &(n-\rho-1-\alpha)k_I \\ 
  &+\left\{ \begin{array}{ll}
0 & \textrm{if } \alpha\le n-\rho+1,\\
\min\{n-\rho-1-s_F(k_I),\alpha-n+\rho-1\} & \textrm{if }  \alpha\ge n-\rho+2.\end{array} \right. \\
\end{split}
\end{equation}

For $\alpha\le n-\rho-1$, this  holds since $\alpha=|A(I)|$ and $m_j\le d$ so that the left hand side of \eqref{ineq for 2, 2} is a non-positive integer, while the right hand side is  non-negative.

For $\alpha\ge n-\rho$, the left hand side of \eqref{ineq for 2, 2} equals
$$
\sum_{j\in S} m_j - nd  - k_I-\sum_{j\in S\setminus (I\cup A(I))}m_j - (\alpha-n+\rho)d,
$$
therefore \eqref{ineq for 2, 2} is equivalent to 
\begin{equation}\label{ineq for 2, 3}
\begin{split}
&(\alpha-n+\rho)(k_I-d)-\sum_{j\in S\setminus (I\cup A(I))}m_j+\left(\sum_{j\in S} m_j-nd\right)
 \\
&\le 
\left\{ \begin{array}{ll}
0 & \textrm{if } \alpha\le n-\rho+1,\\
\min\{n-\rho-1-s_F(k_I),\alpha-n+\rho-1\} & \textrm{if }  \alpha\ge n-\rho+2.\end{array} \right.
\end{split}
\end{equation}
Notice that $| S\setminus (I\cup A(I))|=s-\rho-1-\alpha$. Using this, $k_I-d\le 0$ and $m_i\ge1$,
we obtain that the left hand side of \eqref{ineq for 2, 3} is bounded above by
\begin{equation}\label{ineq for 2, 4}
-(s-\rho-1-\alpha)+
\left\{\begin{array}{ll}
0 & \textrm{if } s\le n+2,\\
\min\{n-s(d),s-n-2\} & \textrm{if }  s\ge n+3.\end{array}
 \right.
\end{equation}
To conclude, it is enough to show that \eqref{ineq for 2, 4} is  bounded above by the expression in the right hand side of 
\eqref{ineq for 2, 3}. We do so in the following cases. 

Case (a) $s\le n+2$. The inequality \eqref{eq alpha} implies that $\alpha\le n-\rho+1$. 
The right  hand side of \eqref{ineq for 2, 3} is zero.  Moreover \eqref{ineq for 2, 4} equals
$-(s-\rho-1-\alpha)$ which is non-positive, therefore we conclude in this case.

Case (b) $s\ge n+3$ and $\alpha\le n-\rho+1$. As above the right  hand side of \eqref{ineq for 2, 3} is zero. 
Moreover \eqref{ineq for 2, 4} is bounded above by 
$-(s-\rho-1-\alpha)+s-n-2$ which is non-positive, so we conclude.

Case (c) $s\ge n+3$ and $\alpha\ge n-\rho+2$. We need to prove that 
\begin{equation}\label{ineq for 2, 5}
-(s-\rho-1-\alpha)+\min\{n-s(d),s-n-2\} \le
\min\{n-\rho-1-s_F(k_I),\alpha-n+\rho-1\}.
\end{equation}

Case (c.1) Assume that $k_I=d$. Notice that $m_i=d$ for all $i\in I$ and $s_F(k_I)$ corresponds
to the cardinality of the set $\{i\in \mathcal{S}\setminus I: m_i=d\}$. Therefore
$s(d)=s_F(k_I)+\rho+1$. Moreover, using $m_i\ge 1$ we also obtain $\alpha=s-\rho-1$.
Therefore $\min\{n-s(d), s-n-2\}= \min\{(n-\rho-1)-s_F(k_I),\alpha-n+\rho-1\}$ and this concludes the proof.

Case (c.2) Assume that $k_I-d\le -1$. Since $\alpha-n+\rho\geq 2$ then the left hand side of 
 \eqref{ineq for 2, 3} is bounded above by
\begin{align*}
 -(\alpha-n+\rho)-(s-\rho-1-\alpha)+\min\{n-s(d),s-n-2\}\le & \\
 -(\alpha-n+\rho)-(s-\rho-1-\alpha)+(s-n-2)=&-1.
\end{align*}
Therefore we conclude because the right hand side of \eqref{ineq for 2, 3} is positive by definition.

\end{proof}

\begin{proof}[Proof of Theorem \ref{gg theorem Xr}]
The case $r=0$ is proved in \cite[Corollary 2.4]{dp-pos1}. We will assume $r\ge 1$.
In order to prove that $D_{(r)}$ is globally generated, we will prove that
 $h^1(D_{(r)}\otimes\II_q)=0$, for all points $q\in X_{s,(r)}$.  
We distinguish the following two cases:
\begin{enumerate}
\item $q$ lies on some exceptional divisor $E_I$,
\item $q$ is the pull-back of a point outside of the union $\sum_{I\subseteq\{1,\dots,s\}, |I|\le r+1} L_I\subset\PP^n$.
\end{enumerate}

Case (1). Assume $q\in E_I$ and  write $\rho+1:=|I|$, $\rho\le r$. 
We want to prove that $h^1(D_{(r)}-E_I)=0$.  Then the conclusion will follow from  Theorem \ref{vanishing prelim}.

Reordering the points if necessary, we may assume that $1\in I$. 
Define the divisor $D'=D-E_1$ in $X_{s,(0)}$. 
 One can easily check that $D'$ satisfies the hypotheses of  Theorem
 \ref{extended vanishing}, therefore $h^1(D'_{(r)})=0$. 

Consider the set $\mathcal{J}$ of all indices $J$ of cardinality $1\le |J|\le r+1$ such that $1\in J$ and such that $\sum_{i\in J}m_i-|J|d\ge0$. 
We consider on $\mathcal{J}$ the \emph{graded lexicographical order}, namely   if $|J_1|<|J_2|$  then $J_1\prec J_2$, while if  $|J_1|=|J_2|$ we use the lexicographical order.

Notice that $I\in \mathcal{J}$ and that $D'_{(r)}=D_{(r)}-\sum_{J\in\mathcal{J}}E_J$.
In other words, $$D_{(r)}-E_I=D'_{(r)}+\sum_{\substack{J\in\mathcal{J}:\\ J\ne I}}E_J.$$
We can obtain $D'_{(r)}$ as the residual of iterative applications of short exact sequences starting from $D'_{(r)}+\sum_{J\in\mathcal{J}, J\ne I}E_J$ by restrictions to the exceptional divisors $E_J$, with $J\in\mathcal{J}$, $J\ne I$, following the order on $\mathcal{J}$. 
 More precisely, we start from $D'_{(r)}+\sum_{J\in\mathcal{J}, J\ne I}E_J$ and we restrict it to $E_1$,  
then we take the kernel of the so obtained exact sequence and we restrict, iteratively, to all $E_J$'s, with $J\in \mathcal{J}\setminus\{I\}$, $|J|=2$, then to all $E_J$'s, with $J\in \mathcal{J}\setminus\{I\}$, $|J|=3$, etc.
We claim that each restricted divisor has vanishing first cohomology group. This gives a proof of the statement, since the last kernel, that is $D'_{(r)}$, has vanishing first cohomology too, by Theorem \ref{extended vanishing}.

To prove the claim, recall that each exceptional divisor $E_J$ is the product of two blown-up projective spaces of dimension $|J|-1$ and $n-|J|$ respectively, see \cite[Lemma 2.6]{dp}. In particular the first component is isomorphic to $\PP^{|J|-1}$ blown-up along linear cycles (see Remark \ref{linear cycles}). Let us denote by $h$ the class of a general hyperplane on the first component and by $e_{J'}$ the class of the restriction of the exceptional divisors $E_{J'}$, namely $E_{J'}|_{E_J}=(e_{J'},0)$, for all $J'\subset J$. Let
$\Cr(h)$ be the proper transform of the \emph{standard Cremona transformation}
 of the hyperplane class $h$ on $\PP^{|J|-1}$, i.e. 
\begin{equation}\label{cremona}
\Cr(h)=(|J|-1)h-\sum_{\substack{J'\subset J:\\ |J'|<|J|-1}}(|J|-|J'|-1)e_{J'}.
\end{equation}
We have
\begin{equation*}
\begin{split}
D'_{(r)}|_{E_{J}}&=(0,*)  \textrm{ for every } E_{J},\\
{E_{J}}|_{E_{J}}&=(- \Cr(h),*), \\ 
{E_{J'}}|_{E_{J}}&=(0,*)  \textrm{ for all } J'\supset J,
\end{split}
\end{equation*}
where we use $*$ to denote the appropriate
divisor on the second factor. See \cite[Sect. 2,3]{dp} for details.

Therefore each of the above restrictions is 
$$\left(D'_{(r)}+\sum_{\substack{J'\in\mathcal{J}:\\ J'\ne I, J\prec J'}}E_{J'}\right)|_{E_{J}}=(-\Cr(h), *).$$ It  has vanishing first cohomology group by \cite[Theorem 3.1]{dp}. 
This concludes the proof of Case (1).

\vskip.3cm 

Case (2). In this case $q$ is the pull-back of a point $q'\in\PP^n\setminus\bigcup_{I\subset\{1,\dots,s\}, |I|\le r}L_I$. 
We prove the claim by induction on $n$. The case $n=1$ is obvious. Assume $n\ge2$.
We distinguish two subcases.

\vskip.3cm 

Case (2.a). 
Let us assume first that the points $p_1, \ldots, p_s, q'$ are not in linearly general position. If $s\ge n$, $q'$ lies on a hyperplane $H$ of $\PP^n$ spanned by $n$ points of 
$\mathcal{S}$. Reordering the points if necessary, assume that $q'\in H:=\langle p_1,\dots,p_n\rangle$.
If $s< n$, let $H$ be any hyperplane containing $\mathcal{S}\cup\{q'\}$.
Let $\bar{H}$ denote the pull-back of $H$ on $X_{s,(r)}$. It is isomorphic to the space $\PP^{n-1}$ blow-up along linear cycles of dimension up to $\min\{r,n-2\}$, 
spanned by $\bar{s}:=\min\{s,n\}$ points in general position, that we may denote by $\bar{H}\cong X^{n-1}_{\bar{s},(r)}$ as in Remark \ref{linear cycles}, part (1). As a divisor, we have 
\begin{equation}\label{bar H r}
\bar{H}=H-\sum_{i=1}^{\bar{s}-1} E_i-\sum_{\substack {I\subset \{1,\ldots, \bar{s}\}:, \\ 1\leq |I|\leq \min\{r,n-2\}}}E_{I}.
\end{equation}

Consider the restriction exact sequence of line bundles
\begin{equation}\label{restriction hyperplane r}
0\to D_{(r)}-\bar{H}\to D_{(r)}\otimes \II_{q} \to (D_{(r)}\otimes \II_{q})|_{\bar{H}}\to0.
\end{equation}
The restriction, $D_{(r)}|_{\bar{H}}\otimes \II_{q}$, is a \textit{toric divisor} on the blown-up space $\bar{H}\cong X^{n-1}_{\bar{s},(r)}$, with a point, $q$, in possible special linear configuration with the other points (see \cite[Section 5.3]{dp} for a short introduction on toric divisors on blown-up projective spaces). 
We conclude that it has vanishing first cohomology group by induction on $n$.
The kernel also has vanishing first cohomology because its base locus possibly contains linear cycles with multiplicity one, see \cite[Theorem 1.5]{dp}.
We  conclude that $h^1(D_{(r)}\otimes \II_{q})=0.$

\vskip.3cm 

Case (2.b). 
Let us assume now that the points $p_1, \ldots, p_s, q'$ are in linearly general position.
If $s\ge n-1$, let $H$ denote the hyperplane $\langle p_1,\dots,p_{n-1},q'\rangle$.  If $s< n-1$, let $H$ be any hyperplane containing
 $\mathcal{S}\cup\{q'\}$. In both cases such an $H$ exists by the assumption that
 $\mathcal{S}$ is a set of points in  general position. Let $\bar{H}$ denote the pull-back of $H$ on $X_{s,(r)}$, as in \eqref{bar H r} and consider the corresponding restriction sequence as in \eqref{restriction hyperplane r}.

As in Case (2.a), we conclude by induction on $n$ and by noticing that the kernel has only possibly simple linear obstructions.

\vskip.3cm 

Assume now that one of the inequalities in \eqref{gg equations Xr} does not hold.  We claim that $D_{(r)}$ is not globally generated. Indeed, if $m_i\geq d+1$ then the divisor $D$ is not effective therefore $D_{(r)}$ is not globally generated.
If $m_i\leq -1$ then the divisor $E_i$ is in the base locus of $D_{(r)}$. If $k_I\geq1$ 
for some $I$ such that $|I|=r+2$ and $r\leq n-1$, then the divisor $D_{(r)}$ contains in its base locus the 
strict transform of the linear cycle $L_{I}$ by Lemma \ref{base locus lemma II}, therefore is not globally generated.

\end{proof}

\begin{remark}
The strict transform $\tilde{D}=D_{(\bar{r})}$ of $\ls$ is base point free if \eqref{gg equations Xr} is
 satisfied with $r=\bar{r}$. 
\end{remark}

\subsection{Vanishing cohomology of strict transforms}\label{vanishing question}
In this section we will determine the base locus and their intersection multiplicity for divisors satisfying condition \eqref{gg equations Xr large}. Furthermore, Theorem \ref{question answer} answers Question 1.1 posed in \cite{bdp1} in this range.

Recall that a line bundle is globally generated if and only if the associated linear system is base point free. 
We are now ready to prove that Theorem \ref{gg theorem Xr} implies a complete description of the 
base locus of all non-empty linear systems in 
$\PP^n$ of the form $\ls=\ls_{n,d}(m_1,\dots, m_s)$ \eqref{linear system} 
that are only linearly obstructed.

\begin{theorem}[Base locus of linear systems]\label{base locus theorem}
The base locus of the divisor $D_{(r)}$ on $X_{s,(r)}$, strict transform of 
$D=dH-\sum_{i=1}^{s}m_iE_i$ satisfying \eqref{gg equations Xr large s}, namely
$$
\sum_{i=1}^s m_i\le nd+b_0,
$$
 is the formal sum \begin{equation}\label{formal sum linear base locus}
\sum_{\substack{I\subseteq\{1,\dots,s\}:\\ |I|\ge r+1}}k_I L_I\in A^\ast(X_{s,(r)}).
\end{equation}
In particular, if $s\le n+2$, the sum \eqref{formal sum linear base locus} with $r=-1$, describes the base locus of all non-empty linear systems $\ls_{n,d}(m_1,\dots,m_s)$, while if $s\geq n+3$ the sum \eqref{formal sum linear base locus} with $r=-1$, describes the base locus of non-empty linear systems satisfying the bound 
\eqref{gg equations Xr large s}.
\end{theorem}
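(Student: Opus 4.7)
My strategy is to sandwich $\Bs(|D_{(r)}|)$ between an explicit lower bound furnished by \cite[Proposition 4.2]{dp} and a matching upper bound obtained by invoking Theorem~\ref{gg theorem Xr} at the top model $X_{s,(\bar{r})}$. For the lower bound, \cite[Proposition 4.2]{dp} tells us that each integer $k_I$ records the multiplicity of containment of the linear cycle $L_I$ in $\Bs(|D|)$, and the same statement persists on $X_{s,(r)}$ for every $L_I$ whose strict transform is still a proper subvariety, that is for every $|I|\ge r+2$. Hence the formal cycle $\sum_{|I|\ge r+2} k_I L_I$ is automatically contained in $\Bs(|D_{(r)}|)$.

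For the matching upper bound I will apply Theorem~\ref{gg theorem Xr} to $\tilde{D}=D_{(\bar{r})}$ on $X_{s,(\bar{r})}$. The degree condition~\eqref{gg equations Xr large s} is precisely the standing hypothesis of the present theorem, while the combinatorial inequalities~\eqref{gg equations Xr} at level $r=\bar{r}$ reduce to $k_I=0$ for every $I$ with $|I|=\bar{r}+2$, which is the very definition of $\bar{r}$, together with the harmless $0\le m_i\le d$ (otherwise $D$ is not effective, or $E_i$ is a fixed divisorial component, contradicting our setup). Thus $\tilde{D}$ is globally generated, equivalently base-point-free, on $X_{s,(\bar{r})}$.

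Let $\pi\colon X_{s,(\bar{r})}\to X_{s,(r)}$ be the composition of the intermediate blow-ups of the linear cycles of dimensions $r+1,\dots,\bar{r}$. Then $\pi^{\ast} D_{(r)}=\tilde{D}+\sum_{r+2\le |I|\le \bar{r}+1} k_I E_I$, and base-point-freeness of $\tilde{D}$ forces $\Bs(|\pi^{\ast} D_{(r)}|)$ to coincide scheme-theoretically with the divisorial term $\sum k_I E_I$. Pushing forward along $\pi$ identifies $\Bs(|D_{(r)}|)$ with $\sum_{r+2\le |I|\le \bar{r}+1} k_I L_I$ on $X_{s,(r)}$, which agrees with the claimed formal cycle $\sum_{|I|\ge r+2} k_I L_I$ because $k_I=0$ for $|I|\ge \bar{r}+2$ by definition of $\bar{r}$. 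The ``in particular'' part of the theorem is then a direct specialization to $r=-1$: the bound~\eqref{gg equations Xr large s} is automatic for $s\le n+1$; it is satisfied by every non-empty $\ls_{n,d}(m_1,\dots,m_{n+2})$ through the dimension formulae of \cite{bdp1} when $s=n+2$; and it is the explicit hypothesis when $s\ge n+3$.

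The step I expect to require the most care is the scheme-theoretic identification of $\Bs(|\pi^{\ast} D_{(r)}|)$ with $\sum k_I E_I$ and its behaviour under $\pi_{\ast}$: set-theoretic base-point-freeness of $\tilde{D}$ would not suffice, and one must invoke the full strength of Theorem~\ref{gg theorem Xr}, namely global generation of the line bundle $\mathcal{O}_{X_{s,(\bar{r})}}(\tilde{D})$, to guarantee that no embedded components appear along the exceptional divisors that could survive pushforward and alter the multiplicities $k_I$.
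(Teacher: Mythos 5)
Your proof is essentially the paper's own argument: the lower bound comes from the exact multiplicities $k_I$ supplied by the base locus lemma, the upper bound from global generation of $\tilde{D}=D_{(\bar{r})}$ via Theorem~\ref{gg theorem Xr}, and the two are matched through the total-transform identity $\pi^{\ast}D_{(r)}=\tilde{D}+\sum k_I E_I$. The one quibble is your justification of the $s=n+2$ case of the ``in particular'' clause: since $b_0=-1$ there, the bound $\sum m_i\le nd-1$ is \emph{not} automatic for every non-empty system (e.g.\ $\ls_{2,2}(1,1,1,1)$ violates it), so that clause instead rests on the fact from \cite{bdp1} that for $s\le n+2$ all obstructions are linear --- a point the paper's proof also leaves implicit.
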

\begin{proof}
By {\cite[Lemma 4.1]{bdp3}} (cf. also Lemma \ref{base locus lemma II} below),  each linear subspace $L_{I}$ with $k_{I}>0$ is a base locus cycle for $\ls$ and  $k_{I}$ is its exact multiplicity of containment. Therefore 
$$ \bigcup_{\substack{I\subseteq\{1,\dots,s\}: \\|I|\ge r+1}}k_I L_I\subset \Bs(|D_{(r)}|).$$
By Theorem \ref{gg theorem Xr}, the strict transform $D_{(r)}$ of an element of $\ls$ is base point free as soon as no higher dimensional cycle, i.e. no $(r+1)$-plane, is contained in the base locus, namely when $k_{I}\le0$ for all $I\subseteq\{1,\dots,s\}$ of cardinality $r+1$. In particular, if $\bar{r}$ is the dimension of the linear base locus of $D$, then  $\tilde{D}=D_{(\bar{r})}$ is base point free. 
Since the total transform of $D_{(r)}$ in $X_{s,(\bar{r})}$ equals 
$$\tilde{D}+ \sum_{\substack{I\subseteq\{1,\dots,s\}: \\|I|\ge r+1}} k_ I E_I,$$ 
one concludes that the base locus of $D$ is supported only along linear cycles
$$\Bs(|D_{(r)}|)\subset \bigcup_{\substack{I\subseteq\{1,\dots,s\}: \\|I|\ge r+1}}k_I L_I.$$
\end{proof}

\begin{remark}
The unique rational normal curve $C$ of degree $n$  through $n+3$ points of $\PP^n$ and its secant varieties 
$\sigma_t(C)$ were studied in \cite{bdp3} as
 cycles of the base locus of non-empty linear systems $\ls$. 
By {\cite[Lemma 4.1]{bdp3}}, condition \eqref{gg equations Xr large s} for $s=n+3$ says that neither $C$ nor $\sigma_t(C)$ are contained in the base locus of $\ls$ (cf. also Lemma \ref{base locus lemma II} below). Hence Theorem \ref{base locus theorem} states that  if $C$ is not in the base locus of $\ls$, then nothing else is, besides the linear cycles. 
\end{remark}

\begin{remark}\label{stable base locus}
Recall that for a line bundle $D_{(r)}$ as above, the \emph{stable base locus} is defined as $$\mathbb{B}(D_{(r)})=\cap_{m\in\mathbb{N}}\Bs(|mD_{(r)}|).$$ 
The obvious equality $k_I(mD)=mk_{I}(D)$, for any integer $m\ge1$, and Theorem \ref{base locus theorem} show that $\Bs(|mD_{(r)}|)=m\cdot\Bs(|D_{(r)}|)$. Therefore the base locus of $D_{(r)}$ is stable, namely $$\mathbb{B}(D_{(r)})=\Bs(|D_{(r)}|).$$
Because the stable base locus is invariant under taking multiples, we can extend this definition to the case of  $\mathbb{R}$-divisors, see \cite[Lemma 3.5.3]{BCHM}. In particular we can consider the stable base locus of $\epsilon D_{(r)}$, for any $\epsilon \in\mathbb{R},$ and obtain 
$$\mathbb{B}(\epsilon D_{(r)})= \mathbb{B}(  D_{(r)})= \Bs(|D_{(r)}|).$$

\end{remark}

We now can prove that the strict transform of $D$ in the iterated blow-up along 
its base locus has vanishing cohomology groups. This answers affirmatively Question \ref{question} .
\begin{theorem}\label{question answer}
Let $\mathcal{S}$ be a collection of points in  general position and $D$
 be any effective divisor on $X_{s, {(0)}}$. Assume the following holds:
\begin{equation}
\sum_{i=1}^{s}m_i-n d\leq
\left\{\begin{array}{ll}
0 & \textrm{if } s\le n+2,\\
\min\{n-s(d),s-n-2\} & \textrm{if }  s\ge n+3.\end{array}
 \right.
\end{equation}
Then Question \ref{question} has affirmative answer.
\end{theorem}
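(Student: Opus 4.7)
The strategy is to reduce Question \ref{question} to the already-established vanishing for strict transforms along the linear base locus given by Theorem \ref{extended vanishing}. The key observation is that, under the numerical hypothesis on $\sum m_i - nd$, the base locus of $D$ is entirely linear, so that the log resolution $\widetilde{X}$ of the pair agrees with the iterated blow-up $X_{s,(\bar{r})}$ along the linear cycles $L_I$ with $k_I > 0$, and consequently $\widetilde{\mathcal{D}} = \tilde{D}$.

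First, I would invoke Theorem \ref{base locus theorem} to describe $\Bs(|D|)$ as the formal sum $\sum_{k_I>0} k_I L_I$, ruling out any non-linear contribution such as the rational normal curve through $n+3$ points or its secant varieties (cf.\ the remark following Theorem \ref{base locus theorem}). This identifies $\widetilde{X}$ with $X_{s,(\bar{r})}$ and the strict transform $\widetilde{\mathcal{D}}$ appearing in Question \ref{question} with the divisor $\tilde{D}$ of Notation \ref{bar r}.

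Second, I would apply Theorem \ref{extended vanishing} to $\tilde{D}$ to obtain $h^i(\tilde{D}) = h^i(\widetilde{X}, \mathcal{O}_{\widetilde{X}}(\widetilde{\mathcal{D}})) = 0$ for every $i \geq 1$. To do this, I need to check the conditions \eqref{cond vanishing}: the sum bound in the hypothesis of Theorem \ref{question answer} matches (or is implied by) the sum bound in \eqref{cond vanishing}, and $m_i \geq 0$ is automatic from the effectiveness of $D$. The pairwise bound $m_i + m_j \leq d+1$ requires attention: if it fails, then the line $L_{ij}$ sits in the base locus with multiplicity $k_{ij} \geq 2$, and one can peel off the corresponding divisorial contribution from $\tilde{D}$ via restriction sequences on the exceptional divisors $E_{ij}$ in the style of the proof of Theorem \ref{gg theorem Xr}, reducing to a divisor that falls within the scope of the vanishing theorem.

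The main obstacle is bridging the small gap between the numerical regimes of Theorems \ref{base locus theorem}, \ref{extended vanishing}, and \ref{question answer}, and handling the pairwise bound $m_i + m_j \leq d+1$ uniformly. If this bound is not automatic in the required range, a careful inductive argument combining Lemma \ref{vanishing prelim} with restriction sequences to the exceptional divisors $E_I$ (as already used in Case~(1) of the proof of Theorem \ref{gg theorem Xr}) will control the higher cohomology and close the gap.
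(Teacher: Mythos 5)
Your proposal follows exactly the paper's proof: Theorem \ref{base locus theorem} shows that the base locus of $D$ is purely linear, so $\widetilde{\mathcal{D}}=\tilde{D}$, and Theorem \ref{extended vanishing} then gives the vanishing. You are in fact more scrupulous than the paper, whose proof is two lines and silently applies Theorem \ref{extended vanishing} without addressing the pairwise bound $m_i+m_j\le d+1$ in \eqref{cond vanishing} or the small mismatch between the hypothesis here and \eqref{gg equations Xr large s} for $s\le n+2$ --- the gaps you flag are real features of the paper's own argument rather than defects of yours.
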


\begin{proof}
Since all cycles of the base locus of the divisors $D$ are linear by Theorem \ref{base locus theorem}, we conclude that $\widetilde{\mathcal{D}}$ equals $\tilde{D}$. The claims follow from Theorem \ref{extended vanishing}.
\end{proof}

\subsection{Log resolutions for divisors on blown-up projective spaces in points}

Let $D$ be any divisor on $X_{s, {(0)}}$ in the hypothesis of Theorem \ref{question answer} and let $\bar{r}$ be the maximum dimension of its linear base locus. 
 If the divisor $D$ has $\bar{r}<n-1$, Theorem \ref{question answer} implies that the map $X_{s, (\bar{r})}\stackrel{}{\rightarrow} X_{s, (0)}$ is a resolution of singularities of $D$.

Moreover, we recall that a \emph{log resolution} of the pair $(X_{s,(0)},D)$ is a birational morphism
$\pi:Y\to X_{s,(0)}$ such that the pair $(Y, \tilde{D})$ is \emph{log smooth}, where 
$\tilde{D}=\pi_\ast^{-1}D$ is the strict transform of $D$, namely such that 
$Y$ is smooth and the sum $\tilde{D}+\textrm{Exc}(\pi)$, where 
$\textrm{Exc}(\pi)$ is the sum of exceptional divisors of $\pi$, is simple normal crossing. 
For $D$ a general divisor of the linear system $|dH-\sum_{i=1}^sm_i E_i|$ as in
 Notation \ref{generality definition}, we obtain the following result.

\begin{corollary}\label{log resolutions}

If $D$ satisfies relation \eqref{gg equations Xr large s}, then
the pair $(X_{s,(\bar{r})},D_{(\bar{r})})$ is log smooth and the birational morphism 
$X_{s, (\bar{r})}\stackrel{}{\rightarrow} X_{s, (0)}$
is a \emph{log resolution} of the singularities of the pair $(X_{s,(0)},D)$.
\end{corollary}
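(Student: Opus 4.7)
Write $\pi\colon X_{s,(\bar r)}\to X_{s,(0)}$ for the composition of blow-ups described in Notation \ref{higher blow ups}, so that $\Exc(\pi)=\sum_{I}E_I$ where the sum runs over those $I\subseteq\{1,\dots,s\}$, $|I|\le\bar r+1$, with $k_I>0$. Log smoothness of the pair $(X_{s,(\bar r)},D_{(\bar r)})$ amounts to three things: smoothness of $X_{s,(\bar r)}$, SNC-ness of $\Exc(\pi)$, and SNC-ness of $D_{(\bar r)}+\Exc(\pi)$.

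Smoothness of $X_{s,(\bar r)}$ and SNC-ness of $\Exc(\pi)$ are built into the construction. Indeed, the plan is to argue inductively: at each stage one blows up the strict transform of a linear cycle $L_I$, ordered by increasing dimension. Because the points of $\mathcal{S}$ are in general position, this strict transform is smooth (it is itself isomorphic to an iterated blow-up of $\PP^{|I|-1}$ along linear cycles, cf. Remark \ref{linear cycles}) and meets the already-constructed exceptional divisors transversally, as can be read off from the explicit product description of each $E_I$ given in \cite[Section 2]{dp}. Hence the standard lemma \emph{``blowing up an SNC pair along a smooth center meeting the boundary transversally produces a new SNC pair''} propagates the SNC property from the empty boundary on $\PP^n$ through all intermediate stages to $X_{s,(\bar r)}$.

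To add $D_{(\bar r)}$ to the picture, I would apply Theorem \ref{gg theorem Xr} with $r=\bar r$. By the definition of $\bar r$ in Notation \ref{bar r} one has $k_I\le 0$ for every $|I|\ge\bar r+2$, so the inequalities \eqref{gg equations Xr} are automatically satisfied at level $r=\bar r$; together with the hypothesis \eqref{gg equations Xr large s}, Theorem \ref{gg theorem Xr} gives that $D_{(\bar r)}$ is globally generated. Since $D$ is a general element of $|dH-\sum m_iE_i|$, the Bertini theorem in characteristic zero ensures that $D_{(\bar r)}$ is smooth and meets the fixed SNC divisor $\Exc(\pi)$ transversally. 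Therefore $D_{(\bar r)}+\Exc(\pi)$ is SNC, and the morphism $\pi$ is a log resolution of $(X_{s,(0)},D)$.

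The main obstacle is the verification, inside the inductive step above, that the strict transform of $L_I$ meets the current exceptional configuration transversally. This is essentially a local-coordinate check at points lying on several $E_J$'s; I expect it to reduce cleanly to the product structure $E_J\cong \PP^{|J|-1}\text{-blow-up}\times \PP^{n-|J|}\text{-blow-up}$ recalled in the proof of Lemma \ref{vanishing prelim}, but it is the only step not entirely formal from what has been set up.
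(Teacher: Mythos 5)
Your proposal is correct and follows essentially the same route as the paper's own proof: smoothness of $X_{s,(\bar r)}$ and SNC-ness of $\Exc(\pi)$ come from the centers of the successive blow-ups being disjoint unions of smooth (strict transforms of) linear cycles, and the divisor part is handled by base-point-freeness of $D_{(\bar r)}$ (Theorem \ref{gg theorem Xr}, which the paper invokes through Theorem \ref{base locus theorem}) combined with generality and Bertini. The transversality check you flag as the one non-formal step is exactly the point the paper also delegates to the explicit product description of the exceptional divisors in \cite[Section 2]{dp}, so nothing is missing.
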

\begin{proof}
The variety  $X_{s,(\bar{r})}$ is smooth.
Indeed at each step of the blow-up of $X$, 
$X_{s,(r)}\to X_{s,(r-1)}$, $r\le\bar{r}$, the center of the blow-up is a disjoint union of smooth 
subvarieties, namely the strict transforms of the liner cycles $L_{I(r)}$ with $k_{I(r)}>0$
 (see \cite[Subsection 4.2]{dp} for details). This, together with the fact that each exceptional divisor is smooth, also proves that the sum $\Exc(\pi)=\sum_{r=1}^{n-2}\sum_{I(r)}E_{I(r)}$ is simple normal crossing. 
 Moreover $D_{(\bar{r})}$ is base point free and its  support 
intersects transversally all exceptional divisors $E_{I(r)}$, by  Theorem \ref{base locus theorem}. Finally since $D$ is general, then $D_{(\bar{r})}$ is general too hence it is smooth by Bertini's theorem. 

\end{proof}

\section{Log abundance for linearly obstructed divisors on $X_{s,(0)}$}\label{abundance section}

In this section we  construct an infinite family of log canonical pairs  given 
by effective divisors on the blow-up of $\PP^n$ at $s$ points in general position, 
$X_{s,(0)}$, with arbitrary $s$. Moreover
we prove that the (log) abundance conjecture holds for these pairs, when $D$ is 
\emph{only linearly obstructed}, namely $D$ has only linear base locus.

In the case $s\le n+3$, the blown-up spaces $X_{s,(0)}$ are Mori dream spaces (see \cite{am,CT, Mukai} 
 for an explicit proof), 
therefore it is a well-known fact that all nef divisors are semi-ample. 
Moreover this statement holds for $s\le 2n$, see \cite[Theorem 3.2]{dp-pos1} and \cite[Proposition 1.4]{am}.
For $s\ge n+4$, the result is new (Section \ref{section abundance n+2}).

\begin{notation}

(\cite{KM})
Let $(X,\Delta)$ be a log pair, with $X$ a normal variety and 
$\Delta=\sum_j a_j\Delta_j$ a formal
$\mathbb{Q}$-linear combination of prime divisors. 
Let $\pi:Y\to X$ be a log resolution of $(X,\Delta)$, denote by
 $\tilde{\Delta}:=\pi_*^{-1}\Delta$ the strict transform of $\Delta$ and by 
$E_i$ the exceptional divisors.
Write 
$$K_Y+\tilde{\Delta}=\pi^*(K_X+\Delta)+\sum_i a(E_{i},X,\Delta) E_i,$$
 with $a(E_{i},X,\Delta)\in\mathbb{Q}.$
If $a_j\le 1$, then the \emph{discrepancy} of the pair
 $(X,\Delta)$ can be computed as
\begin{equation}\label{discrepancy}
\textrm{discrep}(X,\Delta)=\min_i\left\{a(E_{i},X,\Delta), \min_j\{1-a_j\},1\right\},
\end{equation}
see \cite[Corollary 2.32]{KM}.
The pair $(X,\Delta)$ is  said to be \emph{log canonical} (\emph{lc}) 
if  $\textrm{discrep}(X,\Delta)\ge-1$.

\end{notation}

\begin{conjecture}[{\cite[Conjecture 3.12]{KM}}]\label{abundance}
Let $(X,\Delta)$ be lc, $\Delta$ effective. Then $K_X+\Delta$ 
is nef if and only if it is semi-ample.
\end{conjecture}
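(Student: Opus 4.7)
The statement is the full log abundance conjecture of Koll\'ar--Mori, one of the central open problems in higher-dimensional birational geometry. As noted in the excerpt, it is established in dimension up to three (Kawamata, Keel--Matsuki--McKernan), for fourfolds of positive Kodaira dimension (Fukuda), and for rationally connected varieties (Gongyo, Hacon--McKernan--Xu); no complete proof in arbitrary dimension is available, so what I describe below is the standard strategy one would attempt rather than a full argument.

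The ``if'' direction is routine: semi-ampleness of $K_X+\Delta$ provides, by definition, a morphism $f:X\to Z$ and an ample $\mathbb{Q}$-divisor $A$ on $Z$ with $m(K_X+\Delta)\sim f^{*}(mA)$ for some $m>0$, and the pull-back of an ample divisor is nef. Thus the content is entirely in the converse.

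For the converse, the plan is to run the Minimal Model Program. After passing to a dlt modification of $(X,\Delta)$, which preserves both the log canonical condition and the nefness of $K_X+\Delta$, and perturbing the boundary to reduce to the klt setting in the sense of BCHM, one attempts to establish three ingredients in sequence: \emph{non-vanishing}, namely $\kappa(X,K_X+\Delta)\ge 0$; construction of a \emph{good minimal model}, i.e.\ realization of the Iitaka fibration $f:X\to Z$ as an honest morphism with $K_X+\Delta\sim_{\mathbb{Q}} f^{*}H$ for some big $\mathbb{Q}$-divisor $H$ on $Z$; and finally \emph{semi-ampleness}, obtained by applying a suitable Kawamata--Shokurov base-point-free theorem, relative to $f$, to promote the non-zero sections of $m(K_X+\Delta)$ produced by non-vanishing into base-point freeness of a further multiple. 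The ``if and only if'' then follows once semi-ampleness of the suitable multiple of $K_X+\Delta$ has been achieved.

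The main obstruction is the first ingredient: non-vanishing is essentially equivalent to abundance modulo the rest of the MMP framework, and outside of geometrically special settings (positive Kodaira dimension, uniruledness or rational connectedness, or an explicit log resolution on which the relevant cohomology can be controlled) no general mechanism is known to produce the required section of $m(K_X+\Delta)$. This is why one typically retreats, as the present paper does in Theorems \ref{abundance Q} and \ref{abundance Q n+3}, to explicit families of pairs where a concrete log resolution and the vanishing theorems of Section \ref{gg section} allow one to verify non-vanishing, construct the Iitaka fibration by hand, and deduce semi-ampleness directly.
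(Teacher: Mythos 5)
This statement is labeled as a \emph{conjecture} in the paper (quoted from Koll\'ar--Mori) and the paper offers no proof of it; its actual results are the special cases in Theorems \ref{abundance Q} and \ref{abundance Q n+3} together with Corollary \ref{abundance A}, established via explicit log resolutions and the base-point-freeness results of Section \ref{gg section}. You have correctly recognized this: your proof of the ``if'' direction (pull-back of an ample divisor under the semi-ampleness morphism is nef) is complete and standard, your account of the converse as an open problem reducible to non-vanishing within the MMP framework is accurate, and you rightly identify that the paper's contribution is to verify the conjecture for concrete families rather than in general. There is nothing to compare against, and no gap to report beyond the one you have already honestly flagged.
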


As an application of the results of this paper, we prove that the log abundance conjecture 
holds for $(X_{s,(0)},\Delta)$, where $\Delta:=\epsilon D$ with
$0\le \epsilon\le1$ and $D\ge 0$ effective divisor on $X_{s,(0)}$,
by explicitly constructing  a log resolution of the pair.

We recall here that if $s\le n+1$ or $s\ge n+2$ and  \eqref{gg equations Xr large s} holds, the divisor $D$  is effective, by \cite[Theorem 5.3]{bdp1}. 
For a small number of points, $s\leq n+2$, or an arbitrary number
 of points with a bound on the coefficients of the divisors, we construct an infinite family of log canonical pairs  given by  effective divisors $D$
on $X_{s,(0)}$.

\begin{theorem}\label{abundance Q}
  Fix integers $n> 3$, arbitrary $s$. Let 
$D=dH-\sum_{i=1}^sm_iE_i$ be a general effective 
divisor on $X=X_{s,(0)}$ with $s\le n+2$  or with $s\ge n+3$  and  satisfying
\eqref{gg equations Xr large s}. 
For every $\epsilon\in\mathbb{Q}^{\ge0}$, 
such that \begin{equation}\label{abundance condition}
\begin{split}
\ & \epsilon m_i \ge n-1,\ \forall i\in\{1,\dots,s\}\\
\ & \epsilon (m_i+m_j-d)\le n-3, \ \forall i,j\in\{1,\dots,s\}, \ i\neq j,
\end{split}
\end{equation}
the  pair $(X,\Delta)$ is lc.
\end{theorem}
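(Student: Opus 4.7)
My plan is to invoke the explicit log resolution $\pi \colon Y = X_{s,(\bar{r})} \to X_{s,(0)}$ provided by Corollary \ref{log resolutions}, write the discrepancy of $(X_{s,(0)}, \epsilon D)$ at each exceptional divisor in closed form, and verify directly that $\mathrm{discrep}(X_{s,(0)}, \epsilon D) \geq -1$ using the bounds \eqref{abundance condition}.

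The first step is to compute the relative canonical divisor and the pullback of $D$. The morphism $\pi$ factors as a tower of blow-ups $\pi_r \colon X_{s,(r)} \to X_{s,(r-1)}$ whose center at step $r$ is the disjoint union of the smooth, codimension-$(n-|I|+1)$ strict transforms of those $L_I$ with $|I| = r+1$ and $k_I > 0$. Because these centers are disjoint from, and not contained in, any exceptional $E_J$ with $|J|<r+1$ (a consequence of the general-position hypothesis on $\mathcal{S}$), each previously-created $E_J$ pulls back to itself under every subsequent $\pi_r$. Iterating the standard formula for the canonical class under a smooth blow-up then yields
\begin{equation*}
K_Y = \pi^* K_{X_{s,(0)}} + \sum_{|I|\geq 2,\, k_I > 0} (n-|I|)\, E_I.
\end{equation*}
A parallel argument, combined with the fact recalled in \eqref{mult k} that the multiplicity of $D_{(r-1)}$ along the strict transform of $L_I$ is precisely $k_I$, gives
\begin{equation*}
\pi^* D = \tilde{D} + \sum_{|I|\geq 2,\, k_I > 0} k_I\, E_I,
\end{equation*}
and hence the discrepancy identity
\begin{equation*}
K_Y + \epsilon \tilde{D} = \pi^*\bigl(K_{X_{s,(0)}} + \epsilon D\bigr) + \sum_{|I|\geq 2,\, k_I > 0} \bigl(n - |I| - \epsilon k_I\bigr) E_I.
\end{equation*}

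The second step is purely combinatorial: I must verify $n - |I| - \epsilon k_I \geq -1$ for every $I$ with $|I| \geq 2$ and $k_I > 0$; the coefficient $\epsilon$ of $\tilde{D}$ in $\epsilon\tilde{D}$ is at most $1$ by the standing hypothesis $0\le\epsilon\le 1$, so $1-\epsilon\geq -1$ is automatic. Subtracting $\epsilon m_i \geq n-1$ from $\epsilon(m_i+m_j-d)\leq n-3$ shows $\epsilon(m_k-d) \leq -2$ for every $k$. Fixing any two indices $i,j\in I$ and writing
\begin{equation*}
\epsilon k_I \;=\; \epsilon(m_i+m_j-d) + \sum_{k \in I\setminus\{i,j\}} \epsilon(m_k-d) \;\leq\; (n-3) - 2(|I|-2) \;=\; n+1-2|I|,
\end{equation*}
which is bounded above by $n+1-|I|$ for every $|I|\geq 0$, delivers exactly the required inequality.

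The main obstacle lies in the bookkeeping of the iterated blow-up underlying step one: establishing that the strict transform of each $L_I$ is smooth at the moment it is blown up, that the centers at a given step are mutually disjoint and disjoint from earlier exceptional divisors, and that $\mult_{L_I} D_{(r-1)} = k_I$ persists through all intermediate blow-ups. These properties are all encoded in the construction of $X_{s,(r)}$ recalled in Section \ref{notations} and detailed in \cite[Subsection 4.2]{dp}; once they are granted, the discrepancy identity follows by iterating the single-blow-up case, and the combinatorial bound of step two closes the argument.
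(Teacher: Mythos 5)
Your proposal is correct and follows essentially the same route as the paper: it passes to the log resolution $X_{s,(\bar r)}\to X_{s,(0)}$ of Corollary \ref{log resolutions}, reads off the discrepancies $n-|I|-\epsilon k_I$ from the known formulas for $K_Y$ and $\tilde D$, and verifies $\epsilon k_I\le n-|I|+1$ by telescoping $k_I=(m_i+m_j-d)+\sum_{k\in I\setminus\{i,j\}}(m_k-d)$ together with $\epsilon(m_k-d)\le -2$, exactly as in Section \ref{section abundance n+2} (your uniform bound $n+1-2|I|$ just packages the paper's ``take for instance $r=2$\dots the same holds for $r\ge3$'' computation for all $|I|$ at once). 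The only imprecision is the claim that the centers are \emph{disjoint} from the earlier exceptional divisors $E_J$ --- the strict transform of $L_I$ does meet $E_J$ for $J\subset I$ --- but what your argument actually uses is only that the center is not \emph{contained} in any $E_J$, which is true and suffices for $\pi_r^*E_J$ to equal the strict transform.
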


Recall the classes of the canonical divisors on $X$,
$$
K_X=-(n+1)H+(n-1)\sum E_i,
$$ and consider the $\mathbb{Q}$-divisor 
$$K_X+\Delta=(\epsilon d-n-1)H-\sum_i (\epsilon m_i-n+1) E_i.$$

\begin{corollary}\label{abundance A}
In the same hypotheses of Theorem \ref{abundance Q} then
\begin{enumerate}
\item Conjecture \ref{abundance} holds for the pair $(X,\Delta)$, namely if $K_X+\Delta$ is nef then it is semi-ample. 
\item The canonical ring 
$$\bigoplus_{l\ge0}H^0(X,\mathcal{O}_X(l K_X+\lfloor l\Delta \rfloor))$$ 
is finitely generated.
\end{enumerate}
\end{corollary}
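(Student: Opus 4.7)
The plan is to combine a direct application of Theorem~\ref{gg theorem Xr} (with $r=0$) with the Mori dream space structure of $X_{s,(0)}$ available when $s$ is small. For (1) I would split according to whether $s\le n+3$ or $s\ge n+4$; for (2) I would extract finite generation from semi-ampleness.

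\textbf{Part (1), case $s\le n+3$.} In this range $X=X_{s,(0)}$ is log Fano and hence a Mori dream space (as recalled in the introduction), so every nef $\mathbb{Q}$-divisor is automatically semi-ample and the conclusion is immediate.

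\textbf{Part (1), case $s\ge n+4$.} Pick an integer $l\ge 1$ with $l\epsilon\in\mathbb{Z}$, so that $L:=l(K_X+\Delta)=d'H-\sum_{i}m_i'E_i$ is a Cartier divisor with $d'=l(\epsilon d-n-1)$ and $m_i'=l(\epsilon m_i-(n-1))$. The two inequalities in \eqref{abundance condition} translate precisely into the $r=0$ hypotheses of Theorem~\ref{gg theorem Xr} applied to $L$: the condition $\epsilon m_i\ge n-1$ is $m_i'\ge 0$, and $\epsilon(m_i+m_j-d)\le n-3$ rewrites as $m_i'+m_j'\le d'$ for $i\ne j$; combining the two forces $\epsilon(d-m_j)\ge 2$, hence $m_i'\le d'$ automatically. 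The only nontrivial step is to check the large-degree hypothesis \eqref{gg equations Xr large s} for $L$: a direct computation yields
\[
\sum_i m_i'-nd'=l\epsilon\Bigl(\sum_i m_i-nd\Bigr)+l\bigl(n(n+1)-s(n-1)\bigr),
\]
and for $s\ge n+4$ the second summand is sufficiently negative that, combined with the standing bound $\sum_i m_i-nd\le b_0(D)$, one obtains $\sum_i m_i'-nd'\le b_0(L)$ after possibly enlarging $l$; some care is needed to ensure $L$ does not fall into the exceptional shape of \eqref{b for l-va}. Theorem~\ref{gg theorem Xr} then gives base-point-freeness of $L$, i.e., semi-ampleness of $K_X+\Delta$. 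Note that nefness of $K_X+\Delta$ is never used: \eqref{abundance condition} directly yields semi-ampleness, a conclusion strictly stronger than the statement of Conjecture~\ref{abundance}.

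\textbf{Part (2).} For $s\le n+3$, finite generation of every section ring follows from $X$ being a Mori dream space. For $s\ge n+4$, the semi-ampleness of a sufficiently divisible multiple $L=l(K_X+\Delta)$ established in Part~(1) yields finite generation of $\bigoplus_{m\ge 0}H^0(X,\mathcal{O}_X(mL))$ by the standard Iitaka-fibration theorem for semi-ample integral divisors. Passage to the log canonical ring with integer floors $\bigoplus_{l\ge 0}H^0(X,\mathcal{O}_X(lK_X+\lfloor l\Delta\rfloor))$ is routine because $\Delta=\epsilon D$ with $D$ Cartier, so $\lfloor l\Delta\rfloor=\lfloor l\epsilon\rfloor D$ is a Cartier divisor bounded in a controlled way against $L$, and finite generation is preserved under the associated finite extension. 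The main technical obstacle is the bookkeeping in the degree bound for Part~(1) when $s\ge n+4$; every other step reduces to the algebraic manipulation of~\eqref{abundance condition}.
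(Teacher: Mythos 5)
Your strategy for $s\ge n+4$ --- applying Theorem \ref{gg theorem Xr} with $r=0$ directly to $L=l(K_X+\Delta)$ --- is genuinely different from the paper's, which instead quotes the nef-versus-semi-ample comparison of \cite[Theorem 3.1]{dp-pos1} for $s\ge 2n+1$ and the coincidence of the nef and semi-ample cones for $s\le 2n$. Your route could in principle work, but the step you yourself flag as ``the only nontrivial step'' contains a genuine gap. Writing $d'=l(\epsilon d-n-1)$ and $m_i'=l(\epsilon m_i-n+1)$, your own formula gives
\[
\sum_i m_i'-nd'=l\Bigl(\epsilon\bigl(\textstyle\sum_i m_i-nd\bigr)+n(n+1)-s(n-1)\Bigr),
\]
and you need this to be at most $b_0(L)$, a quantity bounded above by $n-1$ independently of $l$. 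Hence ``after possibly enlarging $l$'' goes in the wrong direction: if the bracket is positive, every multiple of $K_X+\Delta$ violates \eqref{gg equations Xr large s}, and if it is nonpositive no enlargement was needed. Moreover the bracket can be positive under the stated hypotheses, because \eqref{abundance condition} imposes no upper bound on $\epsilon$. Concretely, take $n=4$, $s=9$, $D=2H-\sum_{i=1}^{9}E_i$ and $\epsilon=100$: then $\epsilon m_i=100\ge n-1$, $\epsilon(m_i+m_j-d)=0\le n-3$, and $\sum m_i-nd=1\le b_0$, yet $K_X+\Delta=195H-97\sum E_i$ has $\sum_i m_i'-nd'=93l$, far beyond $b_0(L)$, so Theorem \ref{gg theorem Xr} simply does not apply to any multiple of $K_X+\Delta$ and your proof of semi-ampleness collapses in this case.

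The gap is repairable, but only by invoking an upper bound on $\epsilon$ that you never state: if $\epsilon\le 1$ (which is in any case forced if one wants $\Delta=\epsilon D$, with $D$ irreducible, to have log canonical coefficients), then $\epsilon(\sum_i m_i-nd)\le b_0\le n-1$ and $(n-1)+n(n+1)-s(n-1)\le 3-n<0$ for $s\ge n+4$ and $n>3$, so the bracket is nonpositive and \eqref{gg equations Xr large s} holds for $L$ with $l$ the denominator of $\epsilon$. With that fix your argument for $s\ge n+4$ goes through and indeed yields the stronger conclusion that \eqref{abundance condition} alone implies semi-ampleness, paralleling the paper's remark that under \eqref{gg equations Xr large s} nefness of $K_X+\Delta$ is equivalent to \eqref{abundance condition}. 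Part (2) is essentially fine once Part (1) is secured: since $D$ is prime, $\lfloor l(K_X+\Delta)\rfloor=lK_X+\lfloor l\epsilon\rfloor D$, and finite generation of the section ring of a semi-ample $\mathbb{Q}$-divisor is the content of \cite[Section 3.13]{KM}, which is exactly the reference the paper gives.
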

\begin{proof}
By Theorem \ref{abundance Q}, the pair $(X,\Delta)$ is lc. To conclude the first part, 
it is easy to see that under the condition \eqref{gg equations Xr large s}, then the divisor $K_X+\Delta$ is nef (equiv. semi-ample) 
if and only if the
conditions \eqref{abundance condition} of Theorem \ref{abundance Q} are verified.
This follows from \cite[Theorem 3.1]{dp-pos1}. 
In fact, if $s\le 2n$, the nef and the semi-ample cone coincide,
and the thesis follows trivially.
 Otherwise, if $s\ge 2n+1$,
 \eqref{gg equations Xr large s} implies that the hypotheses
of \cite[Theorem 3.1]{dp-pos1} are satisfied.  The first statement follows.

For the second statement see \cite[Section 3.13]{KM}.
\end{proof}

\begin{remark}\label{abundance R}
Let $0\le \epsilon\ll1$, $\epsilon\in\mathbb{R}$. Let $\Delta=\epsilon D\in N^1(X_{s,(0)})_\mathbb{R}$  be an $\mathbb{R}$-divisor on $X_{s,(0)}$ satisfying the assumption of Theorem \ref{abundance Q}.
Then Conjecture \ref{abundance} holds for the pair $(X,\Delta)$. This follows from Theorem \ref{abundance Q} and Remark \ref{stable base locus}.
\end{remark}

\begin{remark}
In the notation of Theorem \ref{abundance Q}, if $s\le n+2$  or $s\ge n+3$ and  condition \eqref{gg equations Xr large s} is satisfied, then $D$ is effective and 
only linearly obstructed, see \cite[Theorem 5.3]{bdp1}. In this case a log resolution of the corresponding pair is given by the iterated blow-up of the linear base locus, see Corollary \ref{log resolutions}.

The case $s=n+3$ is the first case where non-linear obstructions appear for divisors violating \eqref{gg equations Xr large s}. The base locus of effective divisors $D$ was studied in \cite{bdp3} (cf. Lemma \ref{base locus lemma II} below). In this case a log resolution of any pair given satisfying \eqref{abundance condition}, 
 will be constructed in Section \ref{construction log resolution} by means of the iterated blow-up along the subvarieties, linear and non-linear, contained in the base locus.

\end{remark}

\subsection{Proof of Theorem \ref{abundance Q}, only linearly obstructed case}
\label{section abundance n+2}

Let $s$ be an arbitrary integer.
Set $X:=X_{s,(0)}$ and $Y:=X_{s,(n-2)}$. Let 
$$D=dH-\sum_{i=1}^{s}m_i E_i\ge0$$
be a divisor as in Notation \ref{generality definition}.
Assume moreover that $D$ is only linearly obstructed, 
namely that condition \eqref{gg equations Xr large s} is verified.

\begin{remark}\label{movability}
Notice that under the assumption \eqref{abundance condition} the divisor $D$ is irreducible. We recall that $D$ represents a general member of the linear system $|D|$ and the assumptions above force $D$ to have no divisorial components.
Indeed, observe that no (strict transform of) hyperplane spanned by $n$ points is contained in the base locus of $D$. In fact, we can see first of all that \eqref{abundance condition} 
implies that for every $i=1 \dots s$, 
\begin{equation}\label{extra nef condition}
\epsilon (m_i-d) \le -2.
\end{equation}
Now,  if $I=I(n-1)$ is an index set of cardinality $n$, we can compute that the multiplicity of containment of the corresponding hyperplane is zero as follows:
\begin{align*}
\sum_{i\in I}m_i-(n-1)d&=\left(\sum_{i\in I\setminus\{i_1,i_2\}}m_i-(n-2)d\right)
+\left(m_{i_1}+m_{i_2}-d\right)\\
& \le -2(n-2)+(n-3)\\ & \le0.
\end{align*}
\end{remark}

Using the notation introduced in Subsection \ref{higher blow ups}, 
let $\pi:Y\to X$ be the composition of blow-ups  of $X$ along lines, 
then planes etc., up to codimension-$2$ linear cycles $L_I$, $I\subset\{1,\dots,s\}$
 for which $k_I>0$, see \eqref{mult k} for the definition of $k_I$.
As in Notation \ref{D_r}, the strict transform of $D$, is given by
$$
\tilde{D}=dH-\sum_{i}m_i E_i-\sum_{r=1}^{n-2}\sum_{I(r)}k_{I(r)}E_{I(r)}.
$$

\begin{proposition}\label{log resolution n+2}
In the above notation, the map $\pi:Y\to X$ is a log resolution of $(X, \Delta)$, for every
$\epsilon\ge0$.
\end{proposition}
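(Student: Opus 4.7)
The plan is to reduce the proposition to Corollary \ref{log resolutions}. The main point to verify is that $Y = X_{s,(n-2)}$ already resolves the full linear base locus of $D$, so that $\pi$ coincides with the log resolution constructed there.

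First I would invoke Remark \ref{movability}, whose computation relies on hypothesis \eqref{abundance condition} (and in particular on the intermediate inequality \eqref{extra nef condition}), to conclude that for every index set $I$ of cardinality $n$ one has $k_I = \max\{0, \sum_{i \in I} m_i - (n-1)d\} = 0$. Equivalently, no hyperplane spanned by $n$ of the base points is a component of $\Bs(|D|)$, and hence the maximal dimension $\bar{r}$ of the linear base locus of $D$ satisfies $\bar{r} \leq n-2$.

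Next, since Notation \ref{higher blow ups} defines $X_{s,(r)}$ as the iterated blow-up along only those cycles $L_I$ with $k_I > 0$, and all such cycles have dimension at most $\bar{r} \leq n-2$, the blow-ups prescribed for dimensions $r$ with $\bar{r} < r \leq n-2$ are vacuous. Hence $Y = X_{s,(n-2)}$ coincides with $X_{s,(\bar{r})}$ and $\pi$ is the composition of blow-ups along the entire linear base locus of $D$. Under the standing assumption \eqref{gg equations Xr large s}, Corollary \ref{log resolutions} then applies directly, yielding that $Y$ is smooth, that the exceptional sum $\Exc(\pi) = \sum_{r=1}^{n-2}\sum_{I(r)} E_{I(r)}$ is simple normal crossing, and that $\tilde{D}$ is smooth and meets $\Exc(\pi)$ transversally.

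Finally, since $\Delta = \epsilon D$ has the same support as $D$ for $\epsilon > 0$ (and is zero for $\epsilon = 0$, in which case log smoothness reduces to the conditions on $Y$ and $\Exc(\pi)$ alone), the divisor $\tilde{\Delta} + \Exc(\pi)$ is simple normal crossing and $\pi$ is a log resolution of $(X,\Delta)$ for every $\epsilon \geq 0$. The only genuinely nontrivial input is the hyperplane exclusion of Remark \ref{movability}, which pins down $\bar{r} \leq n-2$; everything else is bookkeeping via Corollary \ref{log resolutions}.
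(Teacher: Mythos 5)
Your proposal is correct and takes essentially the same route as the paper: the paper's own proof is a one-line reduction to Corollary \ref{log resolutions}. The only thing you add is the explicit verification (via Remark \ref{movability} and \eqref{extra nef condition}) that no hyperplane $L_{I(n-1)}$ lies in the base locus, so that $\bar{r}\le n-2$ and $Y=X_{s,(n-2)}$ really is $X_{s,(\bar{r})}$; the paper leaves this step implicit, relying on the remark stated just before, so your version is a legitimate filling-in of detail rather than a different argument.
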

\begin{proof} It follows from Corollary \ref{log resolutions}.

\end{proof}

Recall the class of the canonical divisors on $Y$:
$$
K_Y=-(n+1)H+(n-1)\sum E_i+\sum_{r=1}^{n-2}(n-r-1)\sum_{I(r)}E_{I(r)}.
$$
Here we abuse notation by denoting by $H$ the hyperplane class in both $X$ and $Y$; 
similarly by abuse of notation we denote by $E_i$ an exceptional divisor in $X$
and its strict transform in $Y$.

\vskip.3cm 

For $0\le \epsilon<1$, 
consider the pairs $(X,\Delta)=(X_{s,(0)},\epsilon D)$ and  $(Y,\tilde{\Delta})=(X_{s,(n-2)},\epsilon\tilde{D})$ and write
$$
K_Y+\tilde{\Delta}=\pi^*(K_X+\Delta)+
\sum_{1\le r\le n-2}(n-r-1-\epsilon k_{I(r)})E_{I(r)}.
$$
 We have $$a_i(E_{I(r)},X,\Delta)=n-r-1-\epsilon k_{I(r)},$$
for any $I(r)$ such that $1\le r\le n-2$ (cf. \cite[Lemma 2.29]{KM}).
Therefore $\textrm{discrep}(X,\Delta)\ge-1$ if 
\begin{equation}\label{lc condition}
\epsilon k_{I(r)} \le n-r, \  \forall I(r), \ 1\le r\le n-2.
\end{equation}

We are now ready to prove the first main result of this section.

\begin{proof}[Proof of Theorem \ref{abundance Q}, only linearly obstructed case]

We are now ready to prove Theorem \ref{abundance Q} for divisors with only linear obstructions.
By Proposition \ref{log resolution n+2}, $(Y,\tilde{\Delta})$ 
is log smooth and $\pi:Y\to X$ is a log resolution of $(X,\Delta)$.
We are going to prove that the pair is $(X,\Delta)$ is lc.

We  prove that \eqref{lc condition} 
is implied by  \eqref{abundance condition}, second line, 
and by \eqref{extra nef condition}. 
Indeed, take for instance $r=2$. If $k_{I(2)}=0$ the statement 
is obvious. Assume that $k_{I(2)}>0$. 
Write $I(2)=\{i_1,i_2,i_3\}$.
We have $\epsilon k_{I(2)}=\epsilon((m_{i_1}-d)+(m_{i_2}+m_{i_3}-d))\le 
-2+(n-3)\le n-2$. The same holds for $r\ge 3$.
\end{proof}


\section{On the F-conjecture} \label{fulton}

In this section we discuss an application of our results.
Let $\overline{\mathcal{M}}_{0,n}$ be the \textit{moduli space of stable rational 
curves with $n$ marked points}. 
For $n= 5$, $\overline{\mathcal{M}}_{0,n}$ is a del Pezzo surface and it has the property of being a
Mori dream space. Hu and Keel in \cite{HuKeel} showed that $\overline{\mathcal{M}}_{0,6}$
 is a log Fano threefold, hence a Mori dream space;  
Castravet computed its Cox ring in \cite{CA}. 
For $n\geq 10$,  $\overline{\mathcal{M}}_{0,n}$ is known to not be a Mori dream space, see \cite{CT2,karu, LKH}.

We recall here the \emph{F-conjecture} on the nef cone of $\overline{\mathcal{M}}_{0,n}$ proposed by Fulton. 
The elements of the $1$-dimensional boundary strata on  $\overline{\mathcal{M}}_{0,n}$ are called \textit{F-curves}.
A divisor intersecting non-negatively all F-curves is said to be \textit{F-nef}. 
The F-Conjecture states that a divisor on $\overline{\mathcal{M}}_{0,n}$ is nef if and only if it is F-nef. This conjecture was proved for $n\leq 7$ in by Keel and McKernan \cite{KeMc}.

\subsection{Preliminaries and notation}
Let $\mathcal{I}$ be a subset of $\{1, \ldots, n+3\}$ with cardinality 
$2\leq |\mathcal{I}|\leq n+1$ and let $\Delta_{\mathcal{I}}$ denote a
 boundary divisor on $\overline{\mathcal{M}}_{0,n+3}$. Here, $\Delta_{\mathcal{I}}$
 is the divisor parametrizing curves with one component marked by the elements 
of $\mathcal{I}$ and the other component marked by elements of its complement, 
${\mathcal{I}}^c$, in $\{1, \ldots, n+3\}$. Obviously $\Delta_{\mathcal{I}}=\Delta_{{\mathcal{I}}^c}$.

We recall that, for any $1\leq i\leq n+3$,  the tautological class $\psi_i$ is defined as the first Chern class of
the cotangent bundle, $c_1(\mathbb{L}_i)$, where $\mathbb{L}_i$ is the line bundle on $\overline{\mathcal{M}}_{0,n+3}$
such that over a moduli point $(C, x_1, \ldots, x_{n+3})$ the fiber is the cotangent space to $C$ at $x_i$, $T_{x_i}^{*}C.$

 In the Kapranov's model given by $\psi_{n+3}$ , denote by $\mathcal{S}$ the collection of
 $n+2$ points in general position in $\PP^n$ 
 obtained by contraction of sections
 $\sigma_i$ of the forgetful morphism of the $n+3$ marked point. Denote by $S$ the 
set of indices parametrizing $\mathcal{S}$.

Further, denote by $\mathcal{X}_{n+2,(n-2)}$ the iterated
 blow-up of $\PP^n$ along the strict transforms of \textit{all} linear subspaces $L_I$ 
of dimension at most $n-2$ spanned by sets of points $I\subset\mathcal{S}$ with
$|I|\le n-1$, ordered by increasing dimension. Notice that, in the notation of Section \ref{higher blow ups}, for an effective divisor $D$, the iterated blown-up
space $X_{n+2, (n-2)}$ along the linear subspaces that are in the base locus of $|D|$ is a resolution of singularities of $D$ (see Corollary \ref{log resolutions}), so it depends on the divisor we start with. However, $\mathcal{X}_{n+2,(n-2)}$ depends only on the original set of $n+2$ points in $\PP^n$.

In \cite{Ka} Kapranov identifies the moduli space $\overline{\mathcal{M}}_{0,n+3}$ with the 
projective variety  $\mathcal{X}_{n+2, (n-2)}$, in the notation of Section
 \ref{notations}, by constructing birational maps from $\overline{\mathcal{M}}_{0,n+3}$ 
to $\PP^{n}$ induced by the divisors $\psi_i$, for any choice of $i$ with $1\leq i\leq n+3$.

 We recall that the Picard group of $\mathcal{X}_{n+2, (n-2)}$ is spanned by a general 
hyperplane class and exceptional divisors, $\Pic(\mathcal{X}_{n+2, (n-2)})= \langle  H, E_{J} \rangle$, 
where $J$ is any non-empty subset of $S$ with $1\leq |J|\leq n-1$. 

\begin{remark} In the Kapranov's model given by $\psi_{n+3}$, $\mathcal{X}_{n+2, (n-2)}$, there are $n+2$ Cremona transformations that are based on any subset of $n+1$ points of $S$, $S_i:=S \setminus \{p_i\}$. The $\psi_{i}$ classes with $i\neq n+3$ correspond to the image of the Cremona transformation of a general hyperplane class $H$, based on the set $S_i$, denoted by $\Cr_i(H)=\Cr(H)$ in \eqref{cremona}, while $\psi_{n+3}$ corresponds to $H$.
\end{remark}

Furthermore, we have the following identification:
\begin{equation}
 \label{boundary divisors}
\Delta_{\mathcal{I}}= \begin{cases}
 E_J, \qquad |\mathcal{I}|\leq n,\\
 H_J,  \qquad |\mathcal{I}|= n+1.
 \end{cases}
 \end{equation}
where $E_J$ is the strict transform of the exceptional divisor obtained by blowing-up
 the linear cycle spanned points of $J$, while $H_J$ is the strict transform of the 
hyperplane passing thought the points of $J$, namely
$$H_J:=H-\sum_{\substack {I\subset J:\\ 1\leq |I|\leq n-2}}E_I.$$

The \textit{$F$-curves on $\overline{\mathcal{M}}_{0,n+3}$} correspond to 
partitions of the index set  
$$\mathcal{I}_1\sqcup \mathcal{I}_2\sqcup \mathcal{I}_3\sqcup \mathcal{I}_4=
\{1,\ldots, n+3\}.$$ 
We remark that by definition, all subsets $\mathcal{I}_i$ are non-empty.
We denote by $F_{\mathcal{I}_1, \mathcal{I}_2, \mathcal{I}_3, \mathcal{I}_4}$ the class of the
corresponding F-curve. 
We have the following intersection table (see \cite{KeMc}). 
 \begin{equation}
 \label{intersection f-curves}
F_{\mathcal{I}_1, \mathcal{I}_2, \mathcal{I}_3, \mathcal{I}_4}\cdot \Delta_{\mathcal{I}}= \begin{cases}
 1 \qquad  \ \  \mathcal{I}= \mathcal{I}_i\sqcup \mathcal{I}_j, \textrm{ for some } i\neq j,\\
 -1  \qquad \mathcal{I}=\mathcal{I}_i, \textrm{ for some } i,\\
 0  \qquad \  \textrm{ otherwise}.
 \end{cases}
 \end{equation}

We first describe the F-conjecture in a Kapranov's model using the coordinates of the
 N\'eron-Severi  group $N^1(\mathcal{X}_{n+2, (n-2)})$.
Consider a general divisor on $\mathcal{X}_{n+2, (n-2)}$ of the form 
\begin{equation}\label{general div on M}
dH -\sum_{\substack {I\subset S:\\1\leq |I| \leq n-1}} m_{I}E_{I}.
\end{equation}
For a non-empty subset $I$ of the  points parametrized by  $S$ we define
 \begin{equation}
 \label{coefficient1}
a_{I}:= \begin{cases}
 0 \qquad |I|\geq n ,\\
 1  \qquad |I|\leq n-1.
 \end{cases}
 \end{equation}
For any partition of the set of $n+3$ points $G \sqcup J \sqcup L=S$, set
\begin{equation*}
\begin{split}
A_{G, J, L}:&=d-a_G\cdot m_G-a_J\cdot m_J - a_L\cdot m_L + a_{J \sqcup L}\cdot m_{J \sqcup L} \\
&+ a_{J \sqcup G}\cdot m_{J \sqcup G}+ a_{L \sqcup G}\cdot m_{L \sqcup G}.
\end{split}
\end{equation*}

\begin{example}\label{f curves1}
Consider $|G|=n$ then $J$ and $L$  consist of one element each, say $j$
 and respectively $l$. Then $A_{G,J,L}$ is independent of $G$ since
$A_{G,J,L}=d-m_j-m_l+m_{jl}$ for $n\geq 3$. 
 Whenever $|G|\leq n-1$ then $A_{G,J,L}$ depends on all three subsets.
\end{example}

Moreover, for any two non-empty subsets of $S$, $I$ and $J$, set
 \begin{equation}
 \label{coefficient2}
b_{I\sqcup J}:= \begin{cases}
 0 \qquad |I|+|J|\geq n,\\
 1  \qquad |I|+|J|\leq n-1.
 \end{cases}
 \end{equation}
For any partition $I \sqcup G \sqcup J \sqcup L=S$ set
$b_{I \sqcup J \sqcup L}:=b_{I \sqcup (J \sqcup L)}$, as defined in \eqref{coefficient2}, and
\begin{equation*}
\begin{split}
B_{I, G, J, L}:&=m_{I}- b_{ I \sqcup G}\cdot m_{I \sqcup G} - b_{I \sqcup J}\cdot m_{I \sqcup J} - b_{I \sqcup L}\cdot m_{I \sqcup L} + \\
&+ b_{I \sqcup J \sqcup L}\cdot m_{I \sqcup J \sqcup L} + b_{I \sqcup G \sqcup J}\cdot m_{I \sqcup G \sqcup J}+ b_{I \sqcup G \sqcup L}\cdot m_{I \sqcup G \sqcup L}.
\end{split}
\end{equation*}

\begin{example}\label{f curves2} If $|I|+|G|=n$ then $J$ and $L$ consist each of one element, call $j$ and $l$ respectively. In this case
\begin{itemize}
\item
If $|I|=n-1$ then the subsets $G$, $J$ and $L$ consist of one element each and $B_{I, G, J, L}=m_{I}.$
\item
If $|I|=n-2$ then $B_{I, G, J, L}=m_{I}-m_{I\sqcup \{j\}}-m_{I\sqcup \{l\}}.$
\item
If $|I|\leq n-3$ then $B_{I, G, J, L}=m_{I}-m_{I\sqcup \{j\}}-m_{I\sqcup \{l\}}+m_{I\sqcup \{j\}\sqcup \{l\}}.$
\end{itemize}
Whenever $|I|+|G| \leq n-1$ then $B_{I, G, J, L}$ depends on the four subsets of the partition.
\end{example}
\begin{remark}
The number $A_{G, J, L}$ represents the intersection product between the divisor $D$ and the corresponding F-curve contained in a hyperplane divisor and $B_{I, G, J, L}$ represents the intersection product between the divisor $D$ and the F-curve contained in some exceptional divisor $E_I$.
\end{remark}
Using the identification of boundary divisors \eqref{boundary divisors} and the intersection table \eqref{intersection f-curves},
it  is easy to see that the following remark holds.

\begin{remark}[The cone of F-nef divisors]\label{f-nef}
 A divisor on $\mathcal{X}_{n+2, (n-2)}$ of the form \eqref{general div on M} is \textit{F-nef} if the following sets of inequalities hold:
\begin{equation}\label{f-nef ineq}
\begin{split}
A_{G, J, L}\geq 0, & \quad \textrm{ for any partition }G \sqcup J \sqcup L=S,\\
B_{I, G, J, L} \geq 0, &\quad \textrm{ for any partition } I \sqcup G \sqcup J \sqcup L=S.
\end{split}
\end{equation}
 This cone is often referred to as the \emph{Faber cone} in the literature, see e.g. \cite{GKM}.
\end{remark}

\begin{conjecture}[F-conjecture]\label{f-conjecture} A divisor on $\mathcal{X}_{n+2, (n-2)}$ 
of the form \eqref{general div on M} 
 is nef if and only if \eqref{f-nef ineq} holds.
\end{conjecture}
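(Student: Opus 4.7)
The direction ``nef $\Rightarrow$ F-nef'' is immediate, since F-curves are effective $1$-cycles on $\mathcal{X}_{n+2,(n-2)}$, so nef classes pair non-negatively with them. The content is the converse, which is a well-known open problem in full generality; my plan is to establish it for the natural subclass of divisors $\tilde D$ arising as strict transforms of linear systems $|dH-\sum m_i E_i|$ on $X_{n+2,(0)}$ under the blow-up of their linear base locus. Under the identification $\overline{\mathcal{M}}_{0,n+3}\cong\mathcal{X}_{n+2,(n-2)}$, such $\tilde D$ has exceptional multiplicities constrained by $m_I=k_I=\max\{0,\sum_{i\in I}m_i-(|I|-1)d\}$ rather than being free parameters.

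The strategy is to invoke Theorem \ref{gg theorem Xr} together with Remark \ref{gg for Dn-1}: under the large-degree assumption \eqref{gg equations Xr large s}, a strict transform $\tilde D$ on $X_{n+2,(n-2)}$ is globally generated---hence base-point free, hence nef---if and only if the inequalities \eqref{gg equations Xn-1} hold, namely $0\le m_i\le d$ for every $i$ and $nd-\sum_{i\in I}m_i\ge 0$ for every $(n{+}1)$-subset $I$. It therefore suffices to extract these finitely many linear inequalities from the F-nef system \eqref{f-nef ineq}.

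To this end I would match, for each generator of the globally-generated cone, an explicit F-curve whose non-negativity yields the corresponding inequality. The bound $m_i\le d$ should come from an $A_{G,J,L}\ge 0$ inequality with $|G|=n$ and $J,L$ singletons: by Example \ref{f curves1}, $A_{G,J,L}=d-m_j-m_l+k_{\{j,l\}}$, and choosing $l$ with $k_{\{j,l\}}=0$ reduces this to $d-m_j-m_l\ge 0$, which combined with $m_l\ge 0$ (itself obtained from a $B_{\{l\},G,J,L}\ge 0$ inequality in the regime of Example \ref{f curves2}) yields the desired bound. For an $(n{+}1)$-subset $I$, the hyperplane $H_I$ is a boundary divisor via \eqref{boundary divisors}, so the quantity $nd-\sum_{i\in I}m_i$ appears as the intersection of $\tilde D$ with a suitable F-curve in a four-partition refining $I$ versus its complementary two indices; F-nefness then supplies the inequality directly.

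The main obstacle will be the large-degree hypothesis \eqref{gg equations Xr large s} in Theorem \ref{gg theorem Xr}: an arbitrary F-nef class need not a priori satisfy $\sum m_i-nd\le b_0$. One must either verify that F-nefness already forces this bound---plausibly by taking non-negative combinations of selected $A$- and $B$-inequalities against $-K_{\mathcal{X}_{n+2,(n-2)}}$---or restrict the conclusion to the sub-cone cut out by this condition, obtaining the F-conjecture only on a specific sub-cone of F-nef divisors. Extending from strict transforms to the full class of divisors of the form \eqref{general div on M} is a separate and more serious difficulty: it would require writing every F-nef class as a non-negative combination of strict transforms of effective linear systems on $X_{n+2,(0)}$, which is essentially the content of the general F-conjecture and cannot be expected from this method alone.
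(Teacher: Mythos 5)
First, be aware that the paper does not prove Conjecture \ref{f-conjecture} as stated: it remains a conjecture for arbitrary divisors of the form \eqref{general div on M}, and what the paper actually establishes is Theorem \ref{fulton nef}, namely the validity of the conjecture on the subcone of strict transforms $\tilde D$ of effective divisors $D$ on $X_{n+2,(0)}$ with $m_i\ge 0$. You correctly identify this restriction, but your proposed argument runs in the opposite logical direction from the paper's. The paper never attempts to deduce anything from F-nefness: it observes that effectivity of $D$ forces $\sum_{i\in S}m_i\le nd$ and $\sum_{i\in I}m_i\le nd$ for all $I$ with $|I|=n+1$, so that Theorem \ref{gg theorem Xr} together with Remark \ref{gg for Dn-1} applies directly and shows $\tilde D$ is globally generated, hence nef, hence F-nef. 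On this subcone both sides of the biconditional are therefore always true, and the conjecture holds there for essentially trivial logical reasons; the hard implication ``F-nef $\Rightarrow$ nef'' is never engaged. Your plan, by contrast, is to start from \eqref{f-nef ineq} and recover the hypotheses of Theorem \ref{gg theorem Xr}; this is a genuinely different and substantially harder route.

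The concrete gap in that route is the extraction step. For a strict transform the exceptional multiplicities are not free parameters: $m_{\{j,l\}}=k_{\{j,l\}}=\max\{0,m_j+m_l-d\}$ by \eqref{mult k}, so the F-curve number of Example \ref{f curves1} is $A_{G,J,L}=d-m_j-m_l+\max\{0,m_j+m_l-d\}=\max\{0,d-m_j-m_l\}$, which is identically non-negative and carries no information; you cannot ``choose $l$ with $k_{\{j,l\}}=0$'' because the $k$'s are determined by $D$, and if $m_j+m_l>d$ for every $l$ no such choice exists. (The bounds $0\le m_I\le d$ do follow from F-nefness, but via Lemma \ref{degree of a f-nef divisor}, i.e.\ from the $B$-inequalities, not from this $A$-inequality.) More seriously, you never derive the degree hypothesis \eqref{gg equations Xr large s} nor the $(n+1)$-subset inequalities of \eqref{gg equations Xn-1} from F-nefness alone, and you flag this yourself as an open obstacle; as written the proposal is a programme rather than a proof. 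If your goal is to reproduce what the paper proves, the efficient fix is to assume effectivity of $D$ and $m_i\ge0$ (as Theorem \ref{fulton nef} does) and drop the F-nef hypothesis entirely; if your goal is the stronger statement that F-nefness alone implies nefness on this subcone, that requires an argument the paper does not contain.
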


Take a general divisor with degree and multiplicities labeled as in  
\eqref{general div on M}. We will now describe general properties of the $F$-nef divisors in a Kapranov's model that are useful in computations.

\begin{lemma}\label{degree of a f-nef divisor}
Any F-nef divisor satisfies $d\geq m_I\geq 0$, for every $I\subset S$, and $m_I\geq m_J$, 
for every $I,J\subset S$ with $I\subset J$.
\end{lemma}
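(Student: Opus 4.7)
The strategy is to extract $m_I\geq 0$ and the monotonicity $m_I\geq m_J$ for $I\subset J$ from the $B$-inequalities in Remark \ref{f-nef}, and then to deduce $d\geq m_I$ from a single $A$-inequality. Throughout I adopt the convention $m_K=0$ whenever $|K|\geq n$, consistent with the vanishing $a_K=b_K=0$ prescribed by \eqref{coefficient1} and \eqref{coefficient2}. The partitions I will use are all of the form $I\sqcup G\sqcup\{j\}\sqcup\{l\}=S$ with $|I|+|G|=n$, which is precisely the regime where the explicit formulas of Example \ref{f curves2} apply.

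I will prove simultaneously, by downward induction on $|I|$ with $1\leq |I|\leq n-1$, that $m_I\geq 0$ and that $m_I\geq m_{I\sqcup\{l\}}$ for every $l\in S\setminus I$. The base case $|I|=n-1$ forces $|G|=|J|=|L|=1$, and Example \ref{f curves2} reduces the $B$-inequality to $m_I\geq 0$. For $|I|=n-2$ the same example gives $m_I\geq m_{I\sqcup\{j\}}+m_{I\sqcup\{l\}}$, from which both $m_I\geq 0$ and the one-step monotonicity follow via the base case. For $|I|\leq n-3$ the $B$-inequality becomes the submodular relation
\[
m_I+m_{I\sqcup\{j,l\}}\geq m_{I\sqcup\{j\}}+m_{I\sqcup\{l\}},
\]
which rewrites as $m_I-m_{I\sqcup\{l\}}\geq m_{I\sqcup\{j\}}-m_{I\sqcup\{j,l\}}$; the inductive hypothesis $m_{I\sqcup\{j\}}\geq m_{I\sqcup\{j,l\}}$ then yields $m_I\geq m_{I\sqcup\{l\}}$, and combining with $m_{I\sqcup\{l\}}\geq 0$ from the induction delivers $m_I\geq 0$. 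Iterating the one-step monotonicity along any chain $I\subsetneq\cdots\subsetneq J$ produces $m_I\geq m_J$ for every $I\subset J\subset S$.

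To conclude $d\geq m_I$ I apply the $A$-inequality to the partition $G\sqcup\{j\}\sqcup\{l\}=S$ with $|G|=n$, which is the setting of Example \ref{f curves1} and reads $d\geq m_j+m_l-m_{\{j,l\}}$. The monotonicity just established gives $m_{\{j,l\}}\leq m_j$, hence $d\geq m_l$ for every singleton $\{l\}\subset S$; for an arbitrary non-empty $I$, picking any $i\in I$ and invoking $m_{\{i\}}\geq m_I$ produces $d\geq m_I$. The main subtlety is the separation into cases $|I|=n-1$, $n-2$, and $\leq n-3$ in the induction, since the explicit form of the $B$-inequality in Example \ref{f curves2} genuinely changes with $|I|$; once this case-split is handled, the rest reduces to clean bookkeeping with the vanishing conventions on $a_K$ and $b_K$.
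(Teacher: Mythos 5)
Your proof is correct and follows essentially the same route as the paper's: the base case $|I|=n-1$ giving $m_I\geq 0$, the backward induction on $|I|$ through the cases $|I|=n-2$ and $|I|\leq n-3$ using the $B$-inequalities of Example \ref{f curves2}, and the final deduction of $d\geq m_I$ from the $A$-inequality of Example \ref{f curves1} combined with monotonicity. The only (harmless) difference is organizational: you fold $m_I\geq 0$ into the induction rather than deducing it afterwards, and you omit the trivial remark about $n=2$.
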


\begin{proof}
We claim that these inequalities follow from \eqref{f-nef ineq}. For $n=2$ the claim is obvious, hence we assume $n\geq 3$.
In fact, the following inequalities hold:
\begin{enumerate}
\item $m_{I}\geq 0$, for every non-empty set $I$ with $|I|=n-1$,
\item $m_{I}\geq m_{J}$, for every non-empty sets $I,J$ with $I\subset J$.
\end{enumerate}
Claim (1) follows from Example \ref{f curves2} and  Remark \ref{f-nef}.
To prove claim (2) we apply induction on $|I|$. For $i\neq j$ and $i,j \notin I$
 we introduce the following notations: $I_i:=I \sqcup \{i\}$ and $I_{ij}:=I \sqcup \{i,j\}$.
For the first step of induction consider the sets $I$ and $G$ with $|I|=n-2$ and $|G|=2$. For any $i\neq j$  one has, by \eqref{f-nef ineq},
that $$m_I - m_{I_i} -m_{I_j} \geq 0.$$ 
Therefore claim (2) follows from claim (1) for any $I$ with $|I|=n-2$.
If $|I|\leq n-3$ the claim follows using backward induction on $|I|$. Indeed, by Example \ref{f curves2} we have
$$m_I- m_{I_i} - m_{I_j} + m_{I_{ij}}\geq 0,$$
therefore 
$$m_I- m_{I_i} \geq m_{I_j} - m_{I_{ij}}\geq 0.$$
Since $|I_i|=|I|+1$ and $I_i\subset I_{ij}$, the induction hypothesis holds for $I_i$, so the claim follows.

To see that $d\geq m_I$ we use Example \ref{f curves1} and claim (2) to obtain
$$d\geq m_i+(m_j-m_{ij})\geq m_I.$$
\end{proof}

\subsection{The F-conjecture holds for strict transforms on $\overline{\mathcal{M}}_{0,n}$}
The main result of this section is Theorem \ref{fulton nef}, stating that the F-conjecture holds 
for all divisors on $\mathcal{X}_{n+2, (n-2)}$ that are strict transforms of an effective divisor on $X_{n+2, (0)}$.

In the notation of Section \ref{notations}, $D$ be any effective divisor on $X_{n+2, (0)}$ and let $\tilde{D}$ denote its strict transform on $X_{n+2, (n-2)}$. 
A general divisor on $\mathcal{X}_{n+2, (n-2)}$, for arbitrary coefficients $d$ 
and $m_I$, is of the form \eqref{general div on M} while the divisors $\tilde{D}$ 
have arbitrary coefficients $d$ and $m_i$ while $m_I:=k_I$ defined in \eqref{mult k} 
for any index $I$ with $|I|\geq 2$. We can consider $\tilde{D}$ a divisor on 
$\mathcal{X}_{n+2, (n-2)}$.

\begin{theorem}\label{fulton nef} 
Assume $m_i\geq 0$, for all $i\in S$, then Conjecture \ref{f-conjecture} holds for $\tilde{D}$ on $\mathcal{X}_{n+2, (n-2)}$.
\end{theorem}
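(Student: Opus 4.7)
The plan is to prove both directions of the biconditional. The implication \emph{nef $\Rightarrow$ F-nef} is immediate, since a nef divisor intersects every irreducible curve, in particular every F-curve, non-negatively. For the converse, suppose $\tilde{D}$ is F-nef; the idea is to verify the hypotheses of Theorem \ref{gg theorem Xr} and deduce that $\tilde{D}$ is globally generated, hence base point free, hence nef.

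First, applying Lemma \ref{degree of a f-nef divisor} to $\tilde{D}$ yields $0 \leq k_I \leq d$ for every non-empty $I \subset S$ (so in particular $0 \leq m_i \leq d$) and $k_I \geq k_J$ whenever $I \subset J$. Next, I plan to extract from the $B$-type F-nef inequality $B_{I, G, J, L} \geq 0$ with $|I| = n-2$, $|G| = 2$, $|J|=|L|=1$, which by \eqref{coefficient2} reduces to $k_I \geq k_{I \cup J} + k_{I \cup L}$, the bound $\sum_{i \in I'} m_i \leq (n-1)d$ for every $I' \subset S$ of size $n$. A short case analysis on which of $k_I, k_{I\cup J}, k_{I\cup L}$ vanish, combined with $m_i \leq d$, suffices: when all three are positive, the inequality $k_I - k_{I\cup J} - k_{I \cup L} \geq 0$ rewrites directly as $(n-1)d - \sum_{i \in I \cup J \cup L} m_i \geq 0$, and when $k_{I\cup L} = 0$ one combines $\sum_{i \in I}m_i + m_l \leq (n-2)d$ with $m_j \leq d$ to reach the same conclusion. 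In particular $k_{I'} = 0$ for every $|I'| = n$, so no hyperplane lies in the base locus of $\tilde{D}$ and therefore $\bar{r} \leq n-2$.

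At this stage $\tilde{D} = D_{(\bar{r})}$ sits on $X_{n+2,(\bar{r})}$, and conditions \eqref{gg equations Xr} at $r = \bar{r}$ hold: the bound $0 \leq m_i \leq d$ comes from the previous paragraph, while $\sum_{i \in I}m_i \leq (\bar{r}+1)d$ for $|I| = \bar{r}+2$ is automatic since $\bar{r}$ is the maximal dimension of the linear cycles in the base locus of $D$. Theorem \ref{gg theorem Xr} then gives that $D_{(\bar{r})}$ is globally generated, hence nef, on $X_{n+2,(\bar{r})}$; pulling back via the birational morphism $\mathcal{X}_{n+2,(n-2)} \to X_{n+2,(\bar{r})}$ yields that $\tilde{D}$ is nef on $\mathcal{X}_{n+2,(n-2)}$, as required.

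The main obstacle I anticipate is verifying the degree bound that Theorem \ref{gg theorem Xr} requires for $s = n+2$, namely $\sum_{i=1}^{n+2} m_i \leq nd - 1$ (since $b_0 = -1$). From the hyperplane bound of the second paragraph combined with the $A$-type inequality $A_{G, J, L} \geq 0$ for $|G| = n$, $|J| = |L| = 1$, one only obtains $\sum m_i \leq (n-1)d + (m_{j_1}+m_{j_2}) \leq nd$ for a suitable complementary pair $\{j_1, j_2\}$, which is off by one. Closing this gap requires either a refined analysis of the equality configurations or the observation that Theorem \ref{gg theorem Xr}, via Lemma \ref{vanishing prelim}, actually uses only the softer bound $\sum m_i \leq nd$ in this range. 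The case $n=2$ corresponds to $\overline{\mathcal{M}}_{0,5}$, a del Pezzo surface, where the F-conjecture is classical \cite{KeMc} and can be invoked directly.
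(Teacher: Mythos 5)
Your end target (global generation of $\tilde D$ via Theorem \ref{gg theorem Xr}) is the same as the paper's, but your route is genuinely different and has a real gap. The paper's proof is essentially one line: since $D$ is \emph{effective} (a standing hypothesis of the subsection, which your argument never invokes) and $m_i\ge 0$, one gets $\sum_{i\in I}m_i\le nd$ for every $I\subset S$ with $|I|\in\{n+1,n+2\}$ (e.g.\ by writing $D$ as a non-negative combination of the $E_i$ and of the hyperplane classes $H-\sum_{i\in I}E_i$, $|I|=n$, which generate the effective cone for $n+2$ general points). Theorem \ref{gg theorem Xr} together with Remark \ref{gg for Dn-1} then gives that $\tilde D$ is globally generated, hence nef, hence F-nef, and Conjecture \ref{f-conjecture} holds because \emph{both} sides of the biconditional are true; the F-nef inequalities are never used as an input. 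You instead attack the substantive implication ``F-nef $\Rightarrow$ nef'' by extracting the numerical hypotheses of Theorem \ref{gg theorem Xr} from \eqref{f-nef ineq}. Your two partial extractions are correct: Lemma \ref{degree of a f-nef divisor} gives $0\le m_i\le d$, and the $B$-type inequality with $|I|=n-2$ does force $k_{I'}=0$ for every $|I'|=n$, hence $\bar r\le n-2$.

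The gap is the global degree bound, and it is worse than the off-by-one you flag. For $|G|=n$ the $A$-type inequality reads $d-m_j-m_l+k_{\{j,l\}}\ge 0$, and for a divisor of the form $\tilde D$ one has $k_{\{j,l\}}=\max\{0,m_j+m_l-d\}$, so the left-hand side equals $\max\{d-m_j-m_l,0\}$: the inequality is automatically satisfied and gives no control on $m_{j_1}+m_{j_2}$. Hence your chain $\sum_i m_i\le (n-1)d+(m_{j_1}+m_{j_2})\le nd$ does not close at the second step. Nor does summing $\sum_{i\in I'}m_i\le(n-1)d$ over all $|I'|=n$ help, since that only yields $\sum_i m_i\le\tfrac{(n+2)(n-1)}{n}\,d$, which exceeds $nd$ for $n\ge 3$; and whether the full system \eqref{f-nef ineq} forces $\sum_i m_i\le nd$ for an arbitrary, possibly non-effective, $D$ is unclear for general $n$ --- if it did, you would have proved F-nef $\Rightarrow$ nef for this class with no effectivity assumption, a strictly stronger statement than the theorem. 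The repair is to use the hypothesis you dropped: effectivity of $D$ gives $\sum_{i\in I'}m_i\le nd$ for $|I'|=n+1$ and $|I'|=n+2$ at once, after which your argument (and the paper's) concludes. The residual discrepancy between this bound and the stated hypothesis $\sum_i m_i\le nd+b_0=nd-1$ of Theorem \ref{gg theorem Xr} is present in the paper's own proof as well; as you observe, Lemma \ref{vanishing prelim} only requires $\sum_i m_i\le nd$ in the range $s\le n+2$.
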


\begin{proof}
To prove the claim, notice first  that the effectivity of $D$ implies $\sum_{i\in S}m_i\le nd$ and
 $\sum_{i\in I}m_i\le nd$, for all $I\subset S$ such that $|I|=n+1$. Moreover, since $m_i\ge 0$, we 
conclude by Theorem \ref{gg theorem Xr} and Remark \ref{gg for Dn-1}.

We proved that $\tilde{D}$ is globally generated. Therefore
 $\tilde{D}$ is nef and in particular F-nef.
\end{proof}

\begin{remark} Let $f:\overline{\mathcal{M}}_{0,n}\stackrel{}{\rightarrow} X_{n+2, (0)}$ be the Kapranov blow-up.
 For divisors of type $\tilde{D}$, the inequalities of Remark \ref{gg for Dn-1} correspond exactly to the curves generating the Mori cones of $\overline{\mathcal{M}}_{0,n}$, namely the F-curves of $\overline{\mathcal{M}}_{0,n}$ not contracted by $f$.
\end{remark}

\begin{corollary}
For divisors of the form $\tilde{D}\ge 0$ on $\mathcal{X}_{n+2, (n-2)}$, the three properties of being
$F$-nef, nef and globally generated are equivalent. 
\end{corollary}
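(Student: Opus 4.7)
The plan is to show three implications in a cycle: globally generated implies nef, nef implies F-nef, and F-nef implies globally generated. The first two arrows are essentially formal. A base-point-free line bundle has non-negative intersection with every irreducible curve (via the morphism to projective space it defines), so it is nef; and every F-curve is in particular an irreducible curve on $\mathcal{X}_{n+2,(n-2)}$, so any nef divisor is automatically F-nef.

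The content of the corollary sits entirely in the implication F-nef $\Rightarrow$ globally generated, and for this I would combine Lemma \ref{degree of a f-nef divisor} with Theorem \ref{fulton nef}. Suppose $\tilde{D} \geq 0$ is F-nef. Then Lemma \ref{degree of a f-nef divisor} guarantees $0 \leq m_i \leq d$ for every $i \in S$; in particular $m_i \geq 0$, which is the standing hypothesis needed to invoke Theorem \ref{fulton nef}. Moreover, the effectivity of $\tilde{D}$ forces the effectivity of $D$ itself on $X_{n+2,(0)}$, since $D$ differs from the pushforward of $\tilde{D}$ by non-negative combinations of exceptional divisors. With both inputs in hand, I would essentially re-run the argument inside the proof of Theorem \ref{fulton nef}: effectivity of $D$ supplies the numerical bounds $\sum_{i\in S} m_i \leq nd$ and $\sum_{i\in I} m_i \leq nd$ for every $I$ of cardinality $n+1$, which together with $0 \leq m_i \leq d$ are exactly the hypotheses appearing in Remark \ref{gg for Dn-1}; Theorem \ref{gg theorem Xr} then yields that $\tilde{D}$ is globally generated.

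The main (and really only) subtlety is that Theorem \ref{fulton nef} is phrased as the weaker assertion that F-nefness and nefness coincide, while its proof produces the stronger conclusion that $\tilde{D}$ is actually base-point-free. The corollary simply records this stronger conclusion and uses it to close the cycle, so that global generation joins nefness and F-nefness as equivalent conditions. No new ingredient beyond the results already established in the paper is required.
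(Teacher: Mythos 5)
Your proposal is correct and matches the paper's (implicit) argument: the corollary is recorded immediately after Theorem \ref{fulton nef} precisely because its proof establishes the stronger conclusion of global generation, and the only non-formal implication, F-nef $\Rightarrow$ globally generated, is closed exactly as you do, by feeding the bounds $0\le m_i\le d$ from Lemma \ref{degree of a f-nef divisor} together with the effectivity constraints back into Theorem \ref{gg theorem Xr} and Remark \ref{gg for Dn-1}.
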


\begin{remark}
The divisors in \eqref{general div on M} for which $m_I<k_I$ are not nef, as they 
 intersect negatively the class of a general line on the exceptional divisor $E_I$, for $2\leq|I|\leq n-1.$ 

The divisors $\tilde{D}$ with $k_{I}\geq 1$, for some set $I$ with $|I|\geq 2$ are not globally generated since they contract the exceptional divisors $E_{I}$.

\end{remark}

\begin{remark}
Studying divisors interpolating higher dimensional linear cycles, $L_{I}$ for  $|I|\ge2$, 
$m_I>k_I$, is a possible approach to the F-conjecture. Indeed, 
once the vanishing theorems are established by techniques developed in \cite{bdp1} and \cite{dp} they could be used for describing globally generated divisors or
 ample and nef cones of $\overline{\mathcal{M}}_{0,n}$. The description of ample divisors on $\overline{\mathcal{M}}_{g,n}$ is an important question originally asked by Mumford and conjectured by Fulton for $g=0$. In particular, {\cite[Conjecture 0.2]{GKM}}
 holds for $g=0$ if and only if it holds for any $g$.
\end{remark}


\part{Divisors obstructed by rational curves and their secants}\label{part 2}

\section{Notation and preliminary results}\label{notation n+3}

We recall here notations and results introduced in \cite{bdp3}.

\begin{notation}\label{RNC and secants}
It is classically known that there exists a unique rational normal curve $C$ of degree $n$ passing through $n+3$ general points of $\PP^n$\cite{veronese}.
Let $\sigma_t=\sigma_t(C)$ denote the $t$-th secant variety of $C$, namely the Zariski closure of the union of $t$-secant $(t-1)$-planes. In this notation we have $\sigma_1=C$. 
For every $I\subset\{1,\dots,n+3\}$ with $|I|=r+1$, $-1\le r \le n$,
let $$\J(L_I,\sigma_t)$$ be  the \emph{join} of the linear cycle $L_I$ and $\sigma_t$; this is a cone with vertex $L_I$.
We use the conventions $|\emptyset|=0$ and $\sigma_0=\emptyset$.
\end{notation}

The dimension of the variety
$\J(L_I,\sigma_t)$ is 
\begin{equation}\label{dimension join}
r=r_{I,\sigma_t}:=\dim \J(L_I,\sigma_t)=|I|+2t-1.
\end{equation}

Notice if $t=0$, then the join $\J(L_I,\sigma_0)$ is the linear cycle spanned by the points 
parametrized by $I$, $L_I$, while if $I=\phi$, then the join $\J(L_I,\sigma_t)=\sigma_t$
 is the secant variety $\sigma_t$.

\subsection{Strong base locus lemma for divisors on $X_{s+3,(0)}$}
\begin{lemma}[{\cite[Lemma 4.1]{bdp3}}]\label{base locus lemma II}
For any effective divisor $D$  as in \eqref{def D} and for $r_{I,\sigma_t}\leq n-1$, 
 the multiplicity of containment in  $\Bs(|D|)$
of the strict transform in $X_{s,(0)}$ of the subvariety $\J(L_I,\sigma_t)$
is the integer
\begin{equation}\label{expanded}
\ \ \ k_{I,\sigma_t}=k_{I,\sigma_t}(D):=\max\left\{0, t\sum_{i=1}^{n+3} m_i+\sum_{i\in I}m_i-((n+1)t+|I|-1)d\right\}.
\end{equation}
\end{lemma}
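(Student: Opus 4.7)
My plan is to reduce the statement to the already-known linear base locus lemma by means of Cremona transformations, exploiting the classical fact that the rational normal curve $C$ through $n+3$ general points becomes a line under a standard Cremona involution centered at any $n+1$ of those points.

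Concretely, I would choose a subset $S_0\subset\{1,\dots,n+3\}$ with $|S_0|=n+1$ as centers of a standard Cremona involution of $\PP^n$. This involution sends $D=dH-\sum m_iE_i$ to $D'=d'H-\sum m_i'E_i$ with $d'=nd-\sum_{i\in S_0}m_i$, $m_i'=m_i+(d'-d)$ for $i\in S_0$, and $m_i'=m_i$ otherwise. Geometrically, $C$ is mapped to the line $L_{S_0^c}$ through the two non-center points, and by induction on $t$ (using that $\sigma_t(C)$ is swept out by $(t-1)$-secant $(t-1)$-planes to $C$) the image of $\sigma_t(C)$ is identified with a join built from $L_{S_0^c}$ and a lower secant of the image of $C$. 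Iterating compatible Cremonas, the subvariety $\J(L_I,\sigma_t)$ is transformed into a join of purely linear cycles spanned by subsets of the $n+3$ points. To this image the classical linear base locus formula $\max\{0,\sum_{j\in J}m_j'-(|J|-1)d'\}$ (e.g.\ as in \cite[Proposition 4.2]{dp}) applies, computing the multiplicity of containment in $\Bs(|D'|)$. Substituting back the Cremona formulas for $d'$ and $m_i'$ and simplifying should produce exactly the expression in \eqref{expanded}, establishing the lower bound on the multiplicity of containment of $\J(L_I,\sigma_t)$ in $\Bs(|D|)$. For the matching upper bound, one pulls back via the Cremona involution a general movable divisor in $|D'|$ whose base locus in the image is the Cremona image of $\J(L_I,\sigma_t)$ with the correct multiplicity, obtaining a divisor in $|D|$ with $\J(L_I,\sigma_t)$ as fixed part of order exactly $k_{I,\sigma_t}$.

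\textbf{Main obstacle.} The hardest step is the geometric identification of the Cremona image of $\sigma_t(C)$ for $t\geq 2$, which requires a careful analysis of how the Cremona involution interacts with the osculating flag of $C$ at each of the $n+1$ centers, together with an inductive structure compatible with the chain $\sigma_{t-1}(C)\subset\sigma_t(C)$. The algebraic substitution producing \eqref{expanded} is in principle routine, but the bookkeeping across the various edge cases, depending on $|I|$, on whether $S_0\cap I=\emptyset$, on $t=0$ versus $t\geq 1$, and on whether $r_{I,\sigma_t}=n-1$, must be verified case by case. An auxiliary subtlety is that Cremona does not always preserve effectivity, so one must either ensure that $D'$ remains effective under the assumptions imposed on $D$ or bypass this by a direct intersection-theoretic argument with test curves lying in $\J(L_I,\sigma_t)$.
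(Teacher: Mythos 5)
You should first note that the paper does not prove this statement at all: it is imported verbatim as \cite[Lemma 4.1]{bdp3}, and the proof given there (echoed in this paper's Proposition \ref{infinitesimal base locus}, which \emph{is} proved here) is a direct Bezout-type induction — one writes $\J(L_I,\sigma_t)=\J(\J(L_I,\sigma_{t-1}),C)$, takes the line through a general point $q$ of the join, a general point of $\J(L_I,\sigma_{t-1})$ and a general point of $C$, and uses $\mult_q D\ge \mult_{q'}D+\mult_cD-d$ together with the identity $k_{I,\sigma_{t-1}}+k_{\sigma_1}-d=k_{I,\sigma_t}$. Your Cremona strategy is therefore a genuinely different route, and its guiding idea is sound: the standard Cremona formulas you wrote ($d'=nd-\sum_{i\in S_0}m_i$, $m_i'=m_i+(d'-d)$ for $i\in S_0$) are correct, $C$ does map to $L_{S_0^c}$, the induced map on $X_{n+3,(0)}$ is an isomorphism near the general point of any $\J(L_I,\sigma_t)$ with $t\ge 1$ (such a join contains the nondegenerate curve $C$, hence is not contained in any coordinate linear span), and the expression \eqref{expanded} is indeed equivariant under the resulting Weyl-group action.

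However, the proof has a genuine gap exactly at the step you flag as hardest, and your sketch of that step is not merely incomplete but points in the wrong direction. First, the identification of the Cremona image of $\sigma_t$ for $t\ge 2$ is misstated: the image of $\sigma_2$ is $\J(L_{S_0^c},\sigma_1)$, where $\sigma_1$ is the rational normal curve through the \emph{image} configuration of $n+3$ points — it is not "a lower secant of the image of $C$", since the image of $C$ is the line $L_{S_0^c}$, whose secant varieties are all equal to itself. (One can verify the correct rule numerically: $k_{\sigma_2}(D)=k_{S_0^c,\sigma_1}(D')$ under your formulas.) An induction organized around "the osculating flag of $C$ at the centers" does not engage with this mechanism; what is actually needed is the combinatorial rule for how the Cremona reflection permutes the pairs $(I,t)$, which is itself a nontrivial result (it appears in Araujo–Massarenti \cite{am} and in \cite{bdp3}, where it is typically \emph{derived from} base-locus computations of the present type, so care is needed to avoid circularity). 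Second, even granting that rule, you must prove that every $\J(L_I,\sigma_t)$ with $r_{I,\sigma_t}\le n-1$ and $t\ge1$ lies in the Weyl orbit of a purely linear cycle, i.e.\ exhibit a terminating sequence of Cremonas decreasing $t$ to $0$; this is plausible but nowhere established in your argument. Until these two points are supplied, the reduction to \cite[Proposition 4.2]{dp} does not go through. The remaining issues you raise (effectivity, exactness of the multiplicity) are comparatively minor, since multiplicity of containment can be read off at a general point of the join, where the Cremona is a local isomorphism.
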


In particular, the formula \eqref{expanded} for a linear cycle $L_I$ reads
$$k_I=\sum_{i \in I} m_i - (|I|-1)d,$$
while, for the secant variety  $\sigma_t$, it reads
$$k_{\sigma_t}=t\sum_{i=1}^{n+3} m_i -((n+1)t-1)d.$$

We will now prove a stronger result that will play a crucial role in Section 
\ref{abundance section}, particularly in Proposition \ref{linear series of strict transform even}.

\begin{proposition}\label{infinitesimal base locus} Let $D$ be an effective divisor on $X_{s,(0)}$.
 Let $p$ be a point on the variety $\J(L_I,\sigma_t)$ that does not lie on  any smaller join
$\J(L_{I'},\sigma_{t'})\subsetneq \J(L_{I},\sigma_{t})$, $I'\subset I$, $t'\le t$.
Then $p$ is contained in the base locus of the general member of the linear system $|D|$ with multiplicity precisely equal to $k_{I,\sigma_t}$.
\end{proposition}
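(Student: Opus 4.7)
The lower bound $\mult_p D \geq k_{I,\sigma_t}$ is immediate from Lemma \ref{base locus lemma II}, which gives the multiplicity along $\J(L_I,\sigma_t)$ as a pointwise lower bound at every point of the join; the content of the proposition is the reverse inequality at $p$. My plan is to reduce this to a computation on a blow-up that isolates the contribution of $\J(L_I,\sigma_t)$ at $p$, and then to verify that contribution by producing a single divisor in $|D|$ that witnesses the equality.

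Concretely, let $\pi \colon Y \to X_{n+3,(0)}$ be the iterated blow-up, in order of increasing dimension, of the strict transforms of all joins $\J(L_{I'},\sigma_{t'})$ with $I' \subseteq I$, $t' \leq t$ and $(I',t') \neq (I,t)$. By hypothesis $p$ lies on none of these smaller joins, so $\pi^{-1}(p)$ is a single point $\tilde p$ avoiding every exceptional divisor of $\pi$; consequently, writing $D_Y$ for the strict transform of $D$ on $Y$, one has $\mult_{\tilde p} D_Y = \mult_p D$. Moreover, the strict transform $\widetilde{J} \subset Y$ of $\J(L_I,\sigma_t)$ is smooth at $\tilde p$, since every smaller join contributing a singularity to it has been resolved. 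Let $\rho \colon Y' \to Y$ be the blow-up of $\widetilde{J}$, with exceptional divisor $E$, a projective bundle over $\widetilde{J}$ with fiber $F \cong \PP^{c-1}$, where $c = n - |I| - 2t + 1$. Because the generic multiplicity of $D_Y$ along $\widetilde{J}$ equals $k_{I,\sigma_t}$ by Lemma \ref{base locus lemma II}, we have
\[
\rho^\ast D_Y \;=\; \widetilde{D} \;+\; k_{I,\sigma_t}\,E,
\]
with $\widetilde{D}$ the strict transform of $D_Y$ on $Y'$.

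A direct computation in local coordinates adapted to $\widetilde{J}$ at $\tilde p$ then shows that $\mult_p D = k_{I,\sigma_t}$ precisely when $\widetilde{D}$ does \emph{not} contain the fiber $F$ set-theoretically; equivalently, when the restriction $\widetilde{D}|_F$ is a non-zero section on $F \cong \PP^{c-1}$. The proposition therefore reduces to exhibiting a single element of $|D|$ whose strict transform on $Y'$ avoids $F$. To produce such an element I plan to decompose $D$ effectively as a combination of natural building blocks, namely the hyperplanes $H_J$ spanned by subsets $J \subseteq \{1,\dots,n+3\}$ of the assigned points, and effective divisors of the form $\J(L_{J'},\sigma_{s})$ containing lower secants of the rational normal curve $C$, whose strict transforms on $Y'$ have controlled behavior along $E$. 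The coefficients of the decomposition are dictated by the formula \eqref{expanded} and are to be chosen so that the total vanishing along $\widetilde{J}$ matches $k_{I,\sigma_t}$ while no summand contains $F$.

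The main obstacle will be the existence of such a decomposition in general. I expect to handle it by induction on the pair $(|I|,t)$ in the lexicographic order. The base cases are $t=0$, the purely linear join treated by Theorem \ref{base locus theorem}, and $I=\emptyset$, the pure secant case handled in \cite{bdp3}. The inductive step removes one element from $I$ by subtracting a hyperplane through $L_I$ and passing through $p$, or decreases $t$ by subtracting a well-chosen effective divisor containing $\sigma_{t-1}$; the inductive hypothesis is then applied to the residual divisor, provided one verifies that effectivity and the multiplicity formula \eqref{expanded} propagate correctly to the residual coefficients.
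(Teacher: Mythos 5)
Your strategy is genuinely different from the paper's. The paper argues by contradiction: it assumes the general member has multiplicity at least $k_{I,\sigma_t}+1$ at $p$, normalizes by adding or subtracting suitable hyperplanes until the relevant multiplicity $k$ drops to zero, and then derives a contradiction by sweeping the cycle with general lines through $p$ and computing a negative intersection number, with induction on the dimension of the cycle (the case $t\ge 1$ being deferred to an analogous argument in \cite{dp}). Your reduction, by contrast, is sound but does not need the blow-up machinery at all: by upper semicontinuity of multiplicity in a linear system, exhibiting a \emph{single} member of $|D|$ with multiplicity exactly $k_{I,\sigma_t}$ at $p$ already forces the general member to have multiplicity at most $k_{I,\sigma_t}$ there, and the lower bound is Lemma \ref{base locus lemma II}. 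So everything hinges on producing the witness divisor.

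That is where the gap is: the existence of the decomposition is the entire content of the proposition, and your proposal leaves it as a plan. Two concrete obstacles. First, since $k_{I,\sigma_t}=\max\{0,\kappa(\cdot)\}$ with $\kappa$ linear, one has
$\sum_G a_G\,\max\{0,\kappa(G)\}\ \ge\ \max\bigl\{0,\textstyle\sum_G a_G\,\kappa(G)\bigr\}$,
with equality only if every component $G$ appearing in the decomposition satisfies $\kappa(G)\ge 0$; a decomposition into arbitrary effective generators will generically \emph{overshoot} the multiplicity at $p$. Constructing, for an arbitrary effective $D$, an effective integral decomposition into hyperplanes $H_J$ and join divisors all of which satisfy this sign condition relative to the fixed $(I,t)$ is a nontrivial combinatorial problem that the paper never solves in this generality (the decompositions of Propositions \ref{linear series of strict transform even} and \ref{linear series of strict transform odd} exist only under the extra hypothesis \eqref{abundance condition}, and their base-point-freeness statements in fact \emph{cite} Proposition \ref{infinitesimal base locus}). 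Second, your base cases are not covered by the results you invoke: Theorem \ref{base locus theorem} requires the bound \eqref{gg equations Xr large s}, which an arbitrary effective $D$ with $k_C(D)>0$ violates, and Lemma \ref{base locus lemma II} (the result from \cite{bdp3} for $I=\emptyset$) only controls the multiplicity at a \emph{general} point of $\sigma_t$, whereas the proposition asserts equality at \emph{every} point off $\sigma_{t-1}$ --- precisely the strengthening being proved. You should either carry out the decomposition with the sign constraint made explicit, or adopt the paper's line-sweeping contradiction, which avoids the decomposition entirely.
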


\begin{proof} We first prove the claim for $t=0$ when the join $\J(L_I,\sigma_0)$ is the linear cycle $L_I$ 
that is in the base locus of the general member of $|D|$ 
with multiplicity of containment equal to $k_I=k_I(D)$, see \cite[Proposition 4.2]{dp}. 

Let us assume, by contradiction, that there is a point $p$ of $L_I$ with multiplicity of containment at least $k_I+1$. 
Let $I(r)$ be the largest index set such that $p\in L_{I(r)}$. 
Lemma 4.2 of \cite{CDDGP} implies $r\geq 1$. 
We introduce the following notation:
$$K_{I(r)}=K_{I(r)}(D):=\sum_{i\in I(r)} m_i - rd.$$ 

Consider first the case when $K_{I(r)}\geq 1$ and note that $k_{I(r)}=K_{I(r)}$. For $r\geq 2$, let $I(r-2)$ be a subset of $I(r)$, of cardinality equal to $r-1$, and take $H_{I(r-2)}$ a general hyperplane passing through $p$  and all points of $I(r-2)$.
For $r=1$,  $H_{I(-1)}$ denotes the hyperplane passing through the point $p$. For every $r\ge1$, consider  the divisor $D'$, defined as follows, and denote by $d'$ and $m_i'$ its corresponding degree and multiplicities:
$$D':=D+K_{I(r)} H_{I(r-2)}.$$
By assumption, the general member of $|D'|$ contains the point $p$ with multiplicity at least $1$. We can compute the multiplicity of containment of $L_{I(r)}$ in the base locus of $D'$:
\begin{equation}
\begin{split}
K_{I(r)}(D')&=\sum_{i\in I(r)} m'_i - rd'\\
&=\sum_{i\in I(r)} m_i - rd+(r-1)K_{I(r)}-rK_{I(r)}\\
&=K_{I(r)}-K_{I(r)}\\ &=0.
\end{split}
\end{equation}

Consider now the case  $K_{I(r)}<0$. Let $H_{I(r)}$ denote a hyperplane containing 
all points of $I(r)$. Define the divisor
$$D'':=D-K_{I(r)} H_{I(r)}.$$
A similar computation shows the following
\begin{equation}
\begin{split}
K_{I(r)}(D'')&=\sum_{i\in I(r)} m''_i - rd''\\
&=\sum_{i\in I(r)} m_i - rd-(r+1)K_{I(r)}+rK_{I(r)}\\
&=K_{I(r)}-K_{I(r)}\\&=0.
\end{split}
\end{equation}

In the above cases we reduced to the case when $D$ is a divisor with $k_{I(r)}=0$ 
whose general member has a base point, $p$. 
We  now prove by induction on $r$ that this leads to a contradiction.

We  discuss separately the case $r=1$ as the first induction step. The line $L_{I(1)}$ is not contained in the base locus of $D$ by Lemma \ref{base locus lemma II} for $t=0$ and $r=1$. However, the intersection multiplicity between the line $L_{I(1)}$ and $D$ is negative, a contradiction.

In general, we assume  that the statement holds for linear cycles of dimension $r-1$ 
and we prove that it holds for linear cycles of dimension $r$. 
The point $p$ can not lie
on any smaller linear cycle contained in some $L_{I(r-1)}\subset L_{I(r)}$, by the induction assumption. Therefore, $p$ is a point inside the \emph{interior} 
of the cycle $L_{I(r)}$, namely $p\in L_{I(r)}\setminus\bigcup_{I(r-1)\subset I(r)}L_{I(r-1)}$. 
 It is easy to see that whenever $k_{I(r)}\geq 0$, then $k_J\geq 0$ for any subset $J$. 
This implies that for any subset $I(r-1)\subset I(r)$ of cardinality $r$, the divisor $D$ 
contains the linear cycle $L_{I(r-1)}$ in its base locus with multiplicity $k_{I(r-1)}\geq 0$. 
We consider $l$ a general line in $L_{I(r)}$ passing trough $p$.
We observe that the multiplicity of intersection between $l$ and the divisor $D$ is at most 
\begin{equation*}
\begin{split}
d-\sum_{\substack{I(r-1)\subset I(r)}}k_{I(r-1)}-1&=
d-\sum_{\substack{I(r-1)\subset I(r)}}\left(\sum_{i\in I(r-1)}m_i - (r-1)d\right)-1\\
&=r\left(\sum_{i\in I(r)} m_i - rd\right) - 1\\
&= -1.
\end{split}
\end{equation*}
Since the family of lines passing through the point $p$ covers the linear
 cycle $L_{I(r)}$ one obtains that $L_{I(r)}$ is in the base locus of the divisor $D$ 
that is a contradiction with  Lemma \ref{base locus lemma II}, since $k_{I(r)}=0$ (for $t=0$).

If $t\geq 1$,  the proof follows by an argument similar to the one used in \cite[Proposition 4.2]{dp}.

\end{proof}

\vskip.3em

\subsection{The blown-up space $X^\sigma_{n+3,(n-2)}$}
\label{section vanishing cohomology n+3}

In this section we extend Question \ref{question} to the case of effective divisors on
the blow-up of $\PP^n$ at $n+3$ general points. Take 
\begin{equation}\label{divisor n+3 points}
D=dH-\sum_{i=1}^{n+3}m_iE_i\ge0,
\end{equation}
 a divisor on $X_{n+3,(0)}$, the blown-up $\PP^{n}$ at $n+3$ base points, with $d,m_i\ge0$.

We will obtain $X^\sigma_{n+3,(n-2)}$ from  $X_{n+3,(0)}$ by iterated blow-up along the subvarieties that are contained in the base locus of $D$, that are the strict
transforms of $\J(L_{I},\sigma_{t})$, see Lemma \ref{base locus lemma II}.
The pairwise intersections of such subvarieties  with some constraints on the index sets parametrizing the vertices, 
is computed in \cite[Proposition 5.6]{am}. For the sake of completeness we state below this result in our notation.

\begin{proposition}[{\cite[Proposition 5.6]{am}}]\label{intersection of joins}
Let $I_1,I_2\subset\{1,\dots,n+3\}$ be index sets such that 
$I_1\cap I_2=\emptyset$. Let $t_1,t_2\ge0$ 
 be integers such that 
\begin{equation}\label{conditions intersection of joins}
\begin{split}
&r_{I_1,\sigma_{t_1}}=r_{I_2,\sigma_{t_2}}, \\ 
&r_{I_i,\sigma_{t_i}}\le n-1, \ \forall i\in\{1,2\},\\   
& 2r_{I_i,\sigma_{t_i}}\le 2n-(|I_1|+|I_2|),\ \forall i\in\{1,2\}.
\end{split}
\end{equation}
Then $$\J(L_{I_1},\sigma_{t_1})\cap\J(L_{I_2},\sigma_{t_2})=\bigcup_{J}\J(L_J,\sigma_{t_J}),
$$
where the union is taken over all subsets $J\subseteq I_1\cup I_2$ satisfying
$$2|(I_i\cup J)\setminus(I_i\cap J)|=|I_1|+|I_2|,\ \forall i\in\{1,2\},$$
and for every such $J$, $t_J$ is the integer defined by the following equation
$$
2r_{J,\sigma_{t_J}}=2r_{I_i,\sigma_{t_i}}-(|I_1|+|I_2|).
$$
\end{proposition}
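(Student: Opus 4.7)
The plan is to establish both inclusions separately. The key technical input throughout is the classical fact that any $n+1$ points on the rational normal curve $C\subset\PP^n$ are in linearly general position, so that for finite subsets $S_1,S_2\subset C$ with $|S_1\cup S_2|\le n+1$ one has $\langle S_1\rangle\cap\langle S_2\rangle=\langle S_1\cap S_2\rangle$. Summing the two instances of the hypothesis $2r_{I_i,\sigma_{t_i}}\le 2n-(|I_1|+|I_2|)$ gives the crucial numerical bound $(|I_1|+|I_2|)+(t_1+t_2)\le n+1$, placing us precisely in this generic-position range.

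For the inclusion $\supseteq$, fix $J\subseteq I_1\cup I_2$ satisfying the symmetric difference condition and decompose $J=J_1\sqcup J_2$ with $J_i=J\cap I_i$. Since $r_{I_1,\sigma_{t_1}}=r_{I_2,\sigma_{t_2}}$ forces $|I_1|-|I_2|=2(t_2-t_1)$, the symmetric difference condition translates to $|J_1|-|J_2|=t_2-t_1$. A direct computation from the prescribed formula for $t_J$ then yields the key identity $|J_{3-i}|+t_J=t_i$ for $i=1,2$. A general point $p\in\J(L_J,\sigma_{t_J})$ lies on a span $\langle p_J\cup\{q_1,\dots,q_{t_J}\}\rangle$ with $q_\ell\in C$; since $p_{I_i\setminus J_i}\subset C$, this span is contained in $\langle p_{I_i}\cup(p_{J_{3-i}}\cup\{q_1,\dots,q_{t_J}\})\rangle$, where the parenthesised set consists of exactly $|J_{3-i}|+t_J=t_i$ points of $C$, exhibiting $p$ inside $\J(L_{I_i},\sigma_{t_i})$.

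For the inclusion $\subseteq$, take a general point $p$ of an irreducible component of the intersection. Then $p$ lies on spans $\Pi_i=\langle p_{I_i}\cup Q_i\rangle$ for some $Q_i\subset C$ with $|Q_i|=t_i$. Choosing $(Q_1,Q_2)$ as generic as possible among those whose spans pass through $p$, we may assume $Q_i\cap p_{I_i}=\emptyset$ and $(Q_1\cap Q_2)\cap p_{S\setminus(I_1\cup I_2)}=\emptyset$. The general-position lemma then gives $\Pi_1\cap\Pi_2=\langle p_{J_1}\cup p_{J_2}\cup R\rangle$, where $J_i=\{j\in I_i\colon p_j\in Q_{3-i}\}$ and $R=Q_1\cap Q_2$, so $p\in\J(L_J,\sigma_{|R|})$ with $J=J_1\cup J_2\subseteq I_1\cup I_2$. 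The constraint $|Q_i|=t_i$ decomposes as $t_i=|J_{3-i}|+|R|+(\text{free part})$; for the generic point of each component of the intersection this free part vanishes, forcing $|R|=t_i-|J_{3-i}|=t_J$ and the required symmetric-difference condition on $J$.

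The main obstacle is twofold. First, the choice of $(Q_1,Q_2)$ for a given $p$ is not unique, and one must argue that at the generic point of each irreducible component of the intersection there is indeed a choice with vanishing "free part"; this boils down to a dimension count for the incidence variety of triples $(p,Q_1,Q_2)$, comparing the expected fibre dimensions of the two projections to $p$ and to $(Q_1,Q_2)$. Second, one must control degenerations on the boundary of each component, where the spans $\Pi_i$ fail to meet properly or the $Q_i$ collide with points of $p_{S\setminus(I_1\cup I_2)}$; such special points land on strictly smaller joins $\J(L_{J'},\sigma_{t_{J'}})$ already contained in the union by the $\supseteq$ inclusion, and a standard closure argument then promotes the equality from the dense open stratum to the full intersection.
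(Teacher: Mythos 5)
First, a remark on the comparison you asked for: the paper does not prove this proposition at all --- it is imported verbatim (translated into the paper's notation) from Araujo--Massarenti \cite[Proposition 5.6]{am}, so there is no internal proof to measure your attempt against. Judged on its own terms, your argument has the right skeleton: the bound $(|I_1|+|I_2|)+(t_1+t_2)\le n+1$ is indeed the numerical heart of the matter, the general-position lemma $\langle S_1\rangle\cap\langle S_2\rangle=\langle S_1\cap S_2\rangle$ for $S_1,S_2\subset C$ with $|S_1\cup S_2|\le n+1$ is correct by the Grassmann formula, and your inclusion $\supseteq$ is complete: the identity $|J_{3-i}|+t_J=t_i$ does follow from the two numerical conditions on $J$, regrouping $p_{J_{3-i}}\cup\{q_1,\dots,q_{t_J}\}$ as $t_i$ points of $C$ exhibits a general point of $\J(L_J,\sigma_{t_J})$ inside $\J(L_{I_i},\sigma_{t_i})$, and closedness of the target plus irreducibility of the source upgrades this to containment of the whole join.

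The inclusion $\subseteq$, however, has two genuine gaps --- both of which you flag yourself, and neither of which you close. (i) Your starting point, that a point of $\J(L_{I_i},\sigma_{t_i})$ lies on $\langle p_{I_i}\cup Q_i\rangle$ with $Q_i$ a set of $t_i$ \emph{distinct} points of $C$, fails on a positive-codimension locus: already for the twisted cubic, a general point of a tangent line lies in $\sigma_2$ but on no honest $2$-secant line. Hence the locus where your argument applies is only dense in each join, and an irreducible component of the intersection could a priori sit entirely inside the degenerate locus, where your closure argument yields nothing; the assertion that such points ``land on strictly smaller joins already contained in the union'' is exactly the statement to be proved there. The standard repair is to work with length-$t_i$ divisors (subschemes) of $C$ rather than $t_i$ distinct points --- every point of $\sigma_t\setminus\sigma_{t-1}$ lies on the span of such a divisor --- and to verify that the general-position lemma persists for subschemes of $C$ of total degree at most $n+1$, with $R$ replaced by the scheme-theoretic intersection of the two divisors. (ii) The claim that the ``free part'' vanishes at the generic point of each component is the entire content of the direction $\subseteq$, and you leave the dimension count that would justify it unexecuted. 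It can in fact be bypassed: setting $a:=t_1-|J_2|-|R|\ge0$ and $b:=t_2-|J_1|-|R|\ge0$, one can always promote $\J(L_J,\sigma_{|R|})$ into an admissible $\J(L_{J'},\sigma_{t_{J'}})$ by adjoining unused indices of $I_1\cup I_2$ to $J$, absorbing vertex points $p_j$ with $j\in J$ into the secant factor (each such move shifts $a-b$ by one), and enlarging $|R|$ (which lowers $a$ and $b$ together); the relation $|I_1|+2t_1=|I_2|+2t_2$ guarantees enough moves are available to reach $a=b=0$. Until either that combinatorial promotion or the dimension count is actually written down, and the degenerate secant points are handled, the proof is not complete.
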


\begin{notation}\label{Ysigma}
For the sake of simplicity let us denote $Y^\sigma:=X^\sigma_{n+3,(n-2)}$ and $X:=X_{n+3,(0)}$.
For any effective divisor $D$ of the form \eqref{divisor n+3 points},
let $$\pi^\sigma:Y^\sigma\to X$$ be the iterated blow-up of $X:=X_{n+3,(0)}$ 
 along (the strict transforms of) all varieties $\J(L_I, {\sigma}_{t})$, 
$t\ge 0$, $|I|\ge0$, such that 
 $r_{I, {\sigma}_{t}}\le n-2$ and $k_{I,\sigma_t}>0$ in increasing dimension, composed with
a contraction of the strict transforms of the divisors $\J(L_I, {\sigma}_{t})$ 
with $r_{I, {\sigma}_{t}}= n-1$ and $k_{I, {\sigma}_{t}}>0$.
\end{notation}

The latter divisors  were described in \cite[Section 3.2]{bdp3}.
The space $X^\sigma_{n+3,(n-2)}$ is constructed by Araujo and Massarenti in their recent article \cite[Section 5]{am} in order to give explicit log
Fano structures on  $X_{n+3,(0)}$ (see \cite[Propositions 5.8, 5.11]{am}). 

\begin{notation}
We denote by $E_{I,\sigma_t}$ the exceptional divisors, for 
all $t\ge 0$, $|I|\ge0$, $r_{I, {\sigma}_{t}}\le n-1$.

It is immediate to see, using \cite[Lemma 4.1]{bdp3},
that the strict transform on $Y^\sigma$ of $D$ is given by 
\begin{equation}\label{definition D sigma}
\tilde{D}^{\sigma}:=dH-\sum_{i}m_i E_i-\sum_{r=1}^{n-1}\sum_{\substack{I,t:\\ r_{I,\sigma_t}=r}}k_{I,\sigma_t}E_{I,\sigma_t}.
\end{equation}
We stress the fact that the space $Y^\sigma$ depends on the divisor $D$.

\end{notation}

\subsection{Conjectures on vanishing cohomology for divisors on $X^\sigma_{n+3,(n-2)}$}
\label{conjectures} 
Let $D$ be an effective divisor on $\PP^n$ blown-up in $n+3$ general points.

\begin{question}\label{question sigma}
Consider the divisor $\tilde{D}_{\sigma}$ defined in \eqref{definition D sigma}
as the strict transform of $D$ on $X^\sigma_{n+3,(n-2)}$. 
Does $h^i(\tilde{D}^{\sigma})$ vanish for all $i\ge 1$?
\end{question}

\subsubsection{Related questions}

Another challenge would be  to compute the Euler characteristic of  $\tilde{D}^{\sigma}$. 
In what follows we would like to propose a candidate for such a number, namely 
the so called \emph{secant linear virtual dimension} for 
linear systems of hypersurfaces of $\PP^n$ interpolating $n+3$ general points with 
assigned multiplicity, or equivalently of linear systems $|D|$. This number  was introduced in \cite{bdp3}.

\begin{definition}[{\cite[Definition 6.1]{bdp3}}]\label{new definition rnc}
Let $D$ be a divisor on $X_{n+3,(0)}$ as in \eqref{divisor n+3 points}.
The {\em secant linear virtual dimension} of $|D|$ is the number
\begin{equation}\label{RNC expected dimension}
\sldim(D):=\sum_{I,\sigma_t}(-1)^{|I|}{{n+k_{I,\sigma_t}-r_{I,\sigma_t}-1}\choose n},
\end{equation}
where the sum ranges over all indexes $I\subset\{1,\dots,n+3\}$ and $t$ such that 
$0\le t \le l+\epsilon$, $n=2l+\epsilon$  and $0\le|I|\le n-2t$. The integers $k_{I,\sigma_t}$ 
and $r_{I,\sigma_t}$ are defined in \eqref{expanded} and in \eqref{dimension join} respectively.
\end{definition}

\begin{conjecture}\label{conjecture chi sigma}
The Euler characteristic of  the divisor $\tilde{D}^{\sigma}$, 
defined in \eqref{definition D sigma} as the strict transform of $D$ in $X^\sigma_{n+3,(n-2)}$, is
$$
\chi(\tilde{D}^{\sigma})=\sldim(D).
$$
\end{conjecture}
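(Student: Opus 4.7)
The plan is to compute $\chi(\tilde{D}^{\sigma})$ inductively by following the sequence of blow-ups that defines $\pi^{\sigma}\colon Y^{\sigma}\to X=X_{n+3,(0)}$, and to show that each blow-up step contributes exactly the corresponding term of $\sldim(D)$. The base of the induction is the standard Euler characteristic on the blow-up of $\PP^n$ at $n+3$ general points,
\[
\chi\bigl(X,\mathcal{O}_X(D)\bigr)=\binom{n+d}{n}-\sum_{i=1}^{n+3}\binom{n+m_i-1}{n},
\]
which equals the $(\emptyset,0)$ summand together with the $|I|=1$, $t=0$ summands of $\sldim(D)$, since $r_{\emptyset,\sigma_{0}}=-1$, $k_{\emptyset,\sigma_{0}}=d$ and $r_{\{i\},\sigma_{0}}=0$, $k_{\{i\},\sigma_{0}}=m_i$. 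Observe that $|I|=r_{I,\sigma_t}+1-2t$, so the sign $(-1)^{|I|}$ in Definition~\ref{new definition rnc} equals $(-1)^{r_{I,\sigma_t}+1}$ and therefore alternates with the dimension of the centre, as expected for an Euler-characteristic correction.

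For each subsequent centre $V=\J(L_I,\sigma_t)$ with $r_V\le n-2$ and $k_V=k_{I,\sigma_t}>0$, the chosen order of blow-ups together with Proposition~\ref{intersection of joins} ensures that the strict transform of $V$ is smooth at its turn. Iterating the short exact sequence $0\to\mathcal{O}(-E_V)\to\mathcal{O}\to\mathcal{O}_{E_V}\to 0$ twisted by $\mathcal{O}(\pi^{*}D-jE_V)$ and projecting through the blow-up yields
\[
\chi\bigl(\pi^{*}D-k_{V}E_{V}\bigr)-\chi\bigl(\pi^{*}D\bigr)=-\chi\bigl(V_{k_V-1},\,\mathcal{O}_{V_{k_V-1}}(D)\bigr),
\]
where $V_{k_V-1}$ denotes the $(k_V-1)$-st infinitesimal neighbourhood of the strict transform of $V$. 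The core claim to prove is that this correction equals $-(-1)^{|I|}\binom{n+k_V-r_V-1}{n}$, so that the net change in $\chi$ produced by the blow-up of $V$ matches the $(I,t)$-summand of $\sldim(D)$. The case $r_V=n-1$, where the strict transform of $V$ is contracted rather than blown up, is handled by first subtracting $k_V\cdot V$ from $D$ at the level of $X_{n+3,(0)}$ and re-running the same argument with the reduced divisor.

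The central task is the evaluation of the correction as the double sum $\sum_{j=0}^{k_V-1}\chi(V,\,\mathcal{O}_V(D)\otimes\Sym^{j}N_{V/X}^{\vee})$ against the announced closed form. For linear centres $V=L_I$, one uses that $N_{L_I/X}$ is a direct sum of $n-|I|+1$ copies of the line bundle $h-\sum_{i\in I}e_i$ on $L_I\cong\widetilde{\PP}^{|I|-1}$ blown up at $|I|$ points, so that $\Sym^{j}N_{V/X}^{\vee}$ is again a multiple of an explicit line bundle; Hirzebruch--Riemann--Roch on $L_I$ then reduces the correction to an iterated binomial sum, which collapses via hockey-stick and Vandermonde identities to the required expression, the sign $(-1)^{|I|}$ being produced by the alternating contributions of $h^{0}$ and $h^{|I|-1}$ on the $\widetilde{\PP}^{|I|-1}$ factor. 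For non-linear centres $V=\J(L_I,\sigma_t)$ with $t\ge 1$, the strict transform of $V$ at the time of its blow-up fibres, via its cone structure, over the symmetric product $C^{(t)}\cong\PP^{t}$ of the rational normal curve; after twisting by the contribution of $N_{V/X}^{\vee}$, the Euler characteristic reduces to a computation on this $\PP^t$-base, which once more collapses to the announced form.

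The main obstacle lies in this last step for secants of index $t\ge 2$: the ambient variety $\sigma_t$ is singular in $\PP^n$, so both the normal bundle of the strict transform and the restriction $\mathcal{O}(D)|_V$ must be tracked carefully through all prior blow-ups, and Proposition~\ref{intersection of joins} has to be invoked to control the intersections with previously-created exceptional divisors. One must then verify that the ``cross-terms'' arising from smaller joins contained in $V$ cancel, so that the correction produced by the blow-up of $V$ truly delivers a single summand of $\sldim(D)$ rather than a linear combination. A secondary subtlety is that the range restriction $|I|\le n-2t$ in Definition~\ref{new definition rnc} is tight, and has to be matched precisely with the dichotomy between blown-up centres ($r_V\le n-2$) and contracted centres ($r_V=n-1$) in the construction of $Y^{\sigma}$.
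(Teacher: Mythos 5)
The statement you are trying to prove is labelled a \emph{conjecture} in the paper, and the paper does not prove it: the authors only record (in Subsection \ref{conjectures}) that it is known under the bound \eqref{cond vanishing} --- i.e.\ when no positive multiple of the rational normal curve or of a join $\J(L_I,\sigma_t)$ lies in the base locus, by the vanishing theorems of \cite{bdp1,dp} --- and they explicitly state that a proof in general ``would rely on the study of the normal bundles to the joins $\J(L_I,\sigma_t)$,'' which they defer to future work. Your proposal follows exactly that intended route (induction on the blow-ups in $\pi^\sigma\colon Y^\sigma\to X$, with each centre contributing one summand of $\sldim(D)$ via $\sum_{j=0}^{k_V-1}\chi\bigl(V,\mathcal{O}_V(D)\otimes\Sym^j N^\vee_{V/X}\bigr)$), and your base case and sign bookkeeping ($(-1)^{|I|}=(-1)^{r_{I,\sigma_t}+1}$, the $(\emptyset,0)$ and $|I|=1$ terms recovering $\binom{n+d}{n}-\sum_i\binom{n+m_i-1}{n}$) are consistent with Definition \ref{new definition rnc}. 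But what you have written is a plan, not a proof: every step that actually distinguishes this conjecture from the known linear case is flagged by you as ``the core claim to prove'' or ``the main obstacle'' rather than established.

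Concretely, the gaps are: (a) for $t\ge 1$ the centre $\J(L_I,\sigma_t)$ is singular along smaller joins, so the normal bundle of its strict transform at the moment it is blown up --- after all lower-dimensional joins have already been blown up --- is not a sum of line bundles pulled back from an obvious model, and you neither compute it nor cite a computation of it; this is precisely the ingredient the paper says is missing. (b) The identity $\sum_{j=0}^{k_V-1}\chi\bigl(V,\mathcal{O}_V(D)\otimes\Sym^j N^\vee\bigr)=-(-1)^{|I|}\binom{n+k_V-r_V-1}{n}$ is only sketched for linear $V$ (where $N_{L_I/X}\cong\mathcal{O}(h-\sum_{i\in I}e_i)^{\oplus(n-|I|+1)}$ makes it a tractable binomial identity) and merely asserted for the cone-over-$\sigma_t$ centres; note also that even in the linear case the restriction $\mathcal{O}(D)|_V$ must be taken on the \emph{strict} transform of $V$, which is itself a blown-up projective space carrying corrections from the smaller cycles it contains, so your ``cross-term cancellation'' is not a secondary verification but the heart of the matter. (c) The passage from the one-centre formula $\chi(\pi^*D-k_VE_V)-\chi(\pi^*D)$ to the Euler characteristic of $\tilde D^\sigma$, which subtracts \emph{all} the $k_{I,\sigma_t}E_{I,\sigma_t}$ simultaneously on the iterated blow-up, requires an ordering/independence argument you do not supply. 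Until (a)--(c) are carried out, the statement remains exactly what the paper calls it: a conjecture.
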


\vskip.3cm 

The above questions are related to the \emph{dimensionality problem} for linear systems of divisors of
the form \eqref{def D} 
and in particular to the  Fr\"oberg-Iarrobino   conjectures 
\cite{Froberg, Iarrobino}, which give a predicted value for the Hilbert series of
an  ideal generated by $s$ general powers of linear forms  in the polynomial ring
 with $n+1$ variables. We refer to \cite[Section 2.1]{bdp3} for a more detailed  account on this.
In \cite{bdp3} the following conjectural answer to this problem was given in terms of Definition \ref{new definition rnc}.

\begin{conjecture}[{\cite[Conjecture 6.4]{bdp3}}]\label{conjecture}
Set $X=X_{n+3,(0)}$ and let $D$ be as in \eqref{divisor n+3 points}. Then 
$$h^0(X,\mathcal{O}_X(D))=\max\{0,\sldim(D)\}.$$
\end{conjecture}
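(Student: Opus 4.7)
The plan is to reduce Conjecture \ref{conjecture} to the two auxiliary statements posed just before it, namely the higher cohomology vanishing Question \ref{question sigma} and the Euler characteristic formula Conjecture \ref{conjecture chi sigma}. First, suppose $D$ is effective. By Lemma \ref{base locus lemma II} and Proposition \ref{infinitesimal base locus}, for each join $\J(L_I,\sigma_t)$ we know its precise multiplicity of containment in $\Bs(|D|)$; the birational map $\pi^\sigma:Y^\sigma\to X$ of Notation \ref{Ysigma} is built exactly so as to resolve this base locus (together with the contraction of the fixed divisorial components). Consequently the natural pullback map yields
$$H^0(X,\mathcal{O}_X(D))\;\cong\;H^0(Y^\sigma,\mathcal{O}_{Y^\sigma}(\tilde{D}^\sigma)).$$
Granting a positive answer to Question \ref{question sigma}, we have $h^i(\tilde{D}^\sigma)=0$ for all $i\ge 1$, so Leray and Riemann--Roch collapse to $h^0(\tilde{D}^\sigma)=\chi(\tilde{D}^\sigma)$. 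Granting Conjecture \ref{conjecture chi sigma}, this Euler characteristic equals $\sldim(D)$, yielding $h^0(X,\mathcal{O}_X(D))=\sldim(D)$ whenever $D$ is effective. In particular $\sldim(D)\ge0$ in the effective case.

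The non-effective case would be handled separately: if $h^0(X,\mathcal{O}_X(D))=0$, the claim is $\sldim(D)\le 0$. I would argue this by a direct combinatorial analysis of the alternating sum \eqref{RNC expected dimension}, reducing to the fact that if $D$ is not effective then some of the $k_{I,\sigma_t}$ are forced to satisfy $n+k_{I,\sigma_t}-r_{I,\sigma_t}-1<n$, so the corresponding binomial terms vanish, and an inclusion–exclusion argument shows the remaining terms sum to a non-positive integer. Alternatively one can run a specialisation/degeneration argument: pick a generic effective $D'$ with $D'\ge D$ and use the short exact sequence together with $\sldim$ monotonicity on the boundary divisor.

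The true obstacles are the two inputs. For Question \ref{question sigma}, the inductive restriction strategy used in Lemma \ref{vanishing prelim} and in the proof of Theorem \ref{gg theorem Xr} is genuinely harder when the blown-up centre is a secant variety $\sigma_t$ rather than a linear cycle: the exceptional divisor $E_{\emptyset,\sigma_t}$ is not a product of blown-up projective spaces, and one cannot simply read off the restriction of $\tilde{D}^\sigma$ to it from Picard-theoretic bookkeeping. The key technical step would be to describe $\mathcal{O}_{Y^\sigma}(\tilde{D}^\sigma)|_{E_{I,\sigma_t}}$ explicitly, via the pairwise intersection calculus of Proposition \ref{intersection of joins}, and to show that this restriction admits a further birational modification reducing it to a divisor on a lower-dimensional blown-up projective space of the form covered by Theorem \ref{extended vanishing}. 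The induction should then be simultaneous on $n$ and on the layer index $r=r_{I,\sigma_t}$.

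For Conjecture \ref{conjecture chi sigma}, the Euler characteristic $\chi(\tilde{D}^\sigma)$ can be attacked by Hirzebruch--Riemann--Roch once the Chern classes of $Y^\sigma$ and the self-intersections $E_{I,\sigma_t}^k$ are known. The expected shape of the answer — a sum indexed by $(I,\sigma_t)$ with the binomial $\binom{n+k_{I,\sigma_t}-r_{I,\sigma_t}-1}{n}$ — strongly suggests that the natural route is iterated use of the blow-up formula $\chi(Y',\pi^*L - kE)=\chi(Y,L)-\binom{k+c-1}{c}\chi(Z,\mathcal{O})$ for the blow-up of a smooth codimension-$c$ centre $Z$, applied layer by layer in the tower building $Y^\sigma$. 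The bookkeeping obstacle is that the centres $\J(L_I,\sigma_t)$ are themselves birational to blow-ups of projective spaces, so one must keep track of how their structure sheaf Euler characteristics (equal to $1$ after smoothing, by rationality) interact with the coefficients $k_{I,\sigma_t}$ to produce exactly \eqref{RNC expected dimension}. This reduction of the Euler characteristic to a purely combinatorial identity is, in my view, the hardest single step in the whole plan.
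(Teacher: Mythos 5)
The statement you were asked to prove is labelled a \emph{Conjecture} in the paper (it restates \cite[Conjecture 6.4]{bdp3}), and the paper does not prove it: Subsection \ref{conjectures} explicitly records that Question \ref{question sigma} and Conjectures \ref{conjecture chi sigma} and \ref{conjecture} are known only for divisors satisfying the bound \eqref{cond vanishing}, i.e.\ those whose base locus contains no positive multiple of the rational normal curve or of the joins $\J(L_I,\sigma_t)$, and that the general case is deferred to future work via a study of the normal bundles of the joins. Your proposal is therefore not a proof but a conditional reduction: you derive the conjecture from an affirmative answer to Question \ref{question sigma} together with Conjecture \ref{conjecture chi sigma}, both of which are themselves open statements in the same paper. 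The reduction itself (resolve the base locus using Lemma \ref{base locus lemma II} and Proposition \ref{infinitesimal base locus}, identify $h^0(D)$ with $h^0(\tilde{D}^\sigma)$, then combine vanishing with the Euler characteristic formula) is exactly the architecture the authors articulate after Question \ref{question}, so you have correctly identified the intended strategy, but you have not closed either of the two genuinely hard inputs, and you acknowledge as much.

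Two further concrete gaps. First, the non-effective case is not actually handled: the claim that $\sldim(D)\le 0$ whenever $h^0(D)=0$ does not follow from observing that some binomial terms vanish --- the leading term of \eqref{RNC expected dimension} is the large positive number $\binom{n+d}{n}$ and the alternating sum involves massive cancellation; neither the ``inclusion--exclusion argument'' nor the ``specialisation'' alternative is specified enough to be checkable, and no monotonicity property of $\sldim$ is established anywhere in the paper. Second, for the vanishing input you correctly observe that the exceptional divisors $E_{I,\sigma_t}$ over non-linear centres are not products of blown-up projective spaces, so the inductive restriction machinery of Lemma \ref{vanishing prelim} and Theorem \ref{extended vanishing} does not apply; but you offer no replacement beyond the hope that the restriction ``admits a further birational modification.'' Until that step and the Riemann--Roch bookkeeping for Conjecture \ref{conjecture chi sigma} are carried out, the argument establishes nothing beyond what the paper already states as its programme.
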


Previous work \cite{bdp1,dp} contains a proof that
Question \ref{question sigma} admits an affirmative answer, as well as proofs of Conjectures 
\ref{conjecture chi sigma} and \ref{conjecture},
 for divisors satisfying the bound \eqref{cond vanishing}, namely those that do not
contain positive multiples of the rational normal curve of degree $n$ in the base locus, nor joins $\J(I,\sigma_t)$. 
The approach adopted was based on
 the study of the normal bundles of the exceptional divisors of linear cycles, $L_I$, and 
vanishing cohomologies of strict transforms
on $X_{s,(n-2)}$, the blow-up along the linear cycles, see  Theorem 
\ref{extended vanishing}. 

We believe that a proof of the above conjectures for the general case for $s=n+3$
would rely on the study of the normal
bundles to the joins $\J(I,\sigma_t)$. We plan to develop
this approach in future work.

On a more general note,  we would like to point out that the construction of $X^{\sigma}_{s,(n-2)}$ and 
Question \ref{question sigma} and Conjectures
\ref{conjecture} and \ref{conjecture chi sigma} could be generalised to $\PP^n$ 
blown-up in arbitrary number of points in linearly general position.

\section{Explicit proof of log abundance for effective divisors on $X_{n+3,(0)}$}\label{section abundance n+3}

In this section we give an explicit constructive proof of the log abundance conjecture for pairs $(X_{n+3,(0)},D)$ where $D$ is any effective divisor. 
We recall that effective divisors on $X_{s,(0)}$,
 with $s\le n+3$ were classified in \cite{bdp3}.
The case where $D$ has only linear base locus is covered in Theorem \ref{abundance Q}. In this section we complete the picture by studying divisors $D$ that contains also the rational normal curve of degree $n$ through the $n+3$ points, as well as possibly joins between linear cycles and its secants (see Notation \ref{RNC and secants}).

\begin{theorem}\label{abundance Q n+3}
Let $n> 3$ be the dimension of the space and $\epsilon\in\mathbb{Q}$, 
with $0\le \epsilon\ll1$. Let 
$D=dH-\sum_{i=1}^sm_iE_i$ be a general effective 
divisor on $X=X_{s,(0)}$ and assume that \eqref{abundance condition} holds.
Then the  pair $(X,\Delta)$ is lc.
\end{theorem}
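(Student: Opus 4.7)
The plan is to mirror the argument of Section \ref{section abundance n+2}, replacing the linear log resolution $X_{s,(n-2)}\to X$ by the finer Araujo--Massarenti resolution $\pi^\sigma:Y^\sigma\to X$ of Notation \ref{Ysigma}, which additionally blows up (the strict transforms of) the non-linear cycles $\J(L_I,\sigma_t)$ appearing in the base locus of $|D|$ outside the regime \eqref{gg equations Xr large s}. Once this is in place, the theorem reduces, as in the proof of Theorem \ref{abundance Q}, to showing that every $\pi^\sigma$-exceptional divisor has discrepancy at least $-1$ with respect to $(X,\epsilon D)$.

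The first step is to confirm that $\pi^\sigma$ is actually a log resolution of the pair $(X,\epsilon D)$. Smoothness of $Y^\sigma$ and the simple normal crossing property of $\Exc(\pi^\sigma)=\bigcup E_{I,\sigma_t}$ are provided by \cite[Section~5]{am}, using Proposition \ref{intersection of joins} to carry out the iterated blow-up in strictly increasing dimension. An extension to the non-linear setting of the computation in Remark \ref{movability} will show that under \eqref{abundance condition} no divisorial join $\J(L_I,\sigma_t)$ with $r_{I,\sigma_t}=n-1$ sits in the base locus of $|D|$, so that the contraction step in Notation \ref{Ysigma} is vacuous for our $D$. Transversality of $\tilde D^\sigma$ to the exceptional locus, and smoothness of $\tilde D^\sigma$, then follow from Proposition \ref{infinitesimal base locus}, which pins down the exact multiplicity of $|D|$ at an interior point of each $\J(L_I,\sigma_t)$, combined with Bertini applied to the general member $D$.

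The discrepancy analysis is then the heart of the proof. For each center $\J(L_I,\sigma_t)$ blown up in codimension $n-r_{I,\sigma_t}\ge 2$ at a smooth stage of $\pi^\sigma$, one obtains
\[
a(E_{I,\sigma_t},X,\epsilon D)=(n-r_{I,\sigma_t}-1)-\epsilon k_{I,\sigma_t},
\]
so log canonicity amounts to the family of inequalities $\epsilon k_{I,\sigma_t}\le n-r_{I,\sigma_t}$. Using the splitting $k_{I,\sigma_t}=t\,k_{\sigma_1}+k_I$ read off from \eqref{expanded}, the linear part $\epsilon k_I$ is controlled by the bound $\epsilon k_{I(r)}\le n-2r-1$ already established in the proof of Theorem \ref{abundance Q}; the secant part $t\epsilon k_{\sigma_1}$ will be bounded by summing the consequence $\epsilon(m_i-d)\le-2$ of \eqref{abundance condition} (compare \eqref{extra nef condition}) over all $n+3$ points. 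Together with $r_{I,\sigma_t}=|I|+2t-1$, this yields the required inequality, provided the case analysis on the pair $(|I|,t)$ is carried out carefully.

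The main obstacle is exactly this case analysis for joins with secant index $t\ge 1$: unlike the linear case, $k_{I,\sigma_t}$ contains the large contribution $t\sum_i m_i$, so the bounds in \eqref{abundance condition}, which are calibrated to linear obstructions, do not immediately deliver the desired inequality without splitting cases according to the value of $r_{I,\sigma_t}$ and tracking how the secant and linear contributions balance against the codimension. A secondary subtlety is checking, at each intermediate stage of $\pi^\sigma$, that the strict transform of the next join is smooth before being blown up, which relies on the intersection description in Proposition \ref{intersection of joins}. The hypothesis $n>3$ is included only to bypass the three-dimensional situation, in which log abundance is already known by \cite{KMM}.
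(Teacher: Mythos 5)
Your overall strategy coincides with the paper's: blow up all joins $\J(L_I,\sigma_t)$ via $\pi^\sigma:Y^\sigma\to X$, check that this is a log resolution, and verify the discrepancy inequality $\epsilon k_{I,\sigma_t}\le n-r_{I,\sigma_t}$. However, the arithmetic at the heart of your discrepancy analysis does not work as stated. The splitting you read off from \eqref{expanded} is incorrect: one has $k_{I,\sigma_t}=t(k_C-d)+k_I$ (up to the truncation at $0$), not $t\,k_{\sigma_1}+k_I$; the missing $-td$ is exactly what saves the inequality. Moreover, bounding the secant contribution by ``summing $\epsilon(m_i-d)\le -2$ over all $n+3$ points'' yields $\epsilon(k_C-3d)\le -2(n+3)$, which is useless without an upper bound on $\epsilon d$. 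What is actually needed, and what the paper does in Lemma \ref{tildeD intersect exc}, is to group the $n+3$ points into $n-3$ singletons and $3$ pairs to get $\epsilon k_C\le n-3$ (using the second line of \eqref{abundance condition} for the pairs), and into $n-1$ singletons and $2$ pairs to get $\epsilon(k_C-d)\le -4$; iterating these gives $\epsilon k_{I,\sigma_t}\le\max\{0,\,n-1-2r_{I,\sigma_t}\}$, which comfortably implies $\epsilon k_{I,\sigma_t}\le n-r_{I,\sigma_t}$. With your splitting and bound the required inequality would reduce to $t(n-1)\le|I|$, which is false for essentially all joins with $t\ge 1$.

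The second gap is in establishing that $\pi^\sigma$ is a log resolution of the \emph{pair}, i.e.\ that the general member of $|\tilde D^\sigma|$ is smooth and meets $\Exc(\pi^\sigma)$ transversally. Bertini only gives smoothness of the general member away from the base locus, so one must first prove that $|\tilde D^\sigma|$ is base point free; Proposition \ref{infinitesimal base locus} alone controls multiplicities at interior points of the joins but does not rule out base points on the exceptional divisors. This is precisely Theorem \ref{base point free tilde-sigma-D}, whose proof occupies Subsections \ref{proof base point freeness D case even} and \ref{proof base point freeness D case odd}: one chooses $\alpha$ (resp.\ $\beta$) as in \eqref{definition alpha} (resp.\ \eqref{definition beta}) and decomposes $|D|\supseteq|D-\alpha\Sigma|+\alpha\Sigma$ for $n$ even (resp.\ $|D-\beta\Gamma|+\beta\Gamma$ for $n$ odd), where $\Sigma$ (resp.\ $\Gamma$) is the fixed divisor supported on $\sigma_\nu$ (resp.\ on the cone $\J(L_{\{1\}},\sigma_\nu)$), reducing to the only-linearly-obstructed case of Theorem \ref{gg theorem Xr} and then invoking Proposition \ref{infinitesimal base locus} to exclude residual base points on $\tilde\Sigma$ and the $E_{I,\sigma_t}$. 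Your plan omits this reduction entirely, and without it the log-smoothness of $(Y^\sigma,\epsilon\tilde D^\sigma)$ is not established.
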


Set $X:=X_{n+3,(0)}$ and let $D$ on $X$ be a divisor of the form
$$D=dH-\sum_{i=1}^{n+3}m_iE_i\ge0.$$
 Write $\Delta=\epsilon D$, for $0\le\epsilon\ll1$ and assume it satisfies condition \eqref{abundance condition} of Theorem \ref{abundance Q}.

\begin{remark}\label{movable'}
Note that,  \eqref{abundance condition}  implies that $D$ is big (in fact it lies in the interior of the effective cone, see \cite[Theorem 5.1]{bdp3}).
In fact we can use \eqref{low mult of joins} that gives
$$
\epsilon k_{I(n-2t),\sigma_t}\le -n-1<0,
$$
for all $I(n-2t),\ t\ge 1$.
Moreover one can similarly check that 
$$\epsilon k_{I(n-2t-1),\sigma_t} \le0,$$ 
for all $I(n-2t-1),\ t\ge 1$, hence $D$ is movable, see \cite[Theorem 5.3]{bdp3}.  
In particular the general element of $|D|$ is irreducible.
\end{remark}

\begin{remark}
The argument developed in Section \ref{section abundance n+2} 
applies also to the case $s=n+3$ and $k_C:=k_C(D)=0$ (see \eqref{expanded} 
for the definition of this number), namely to divisors $D$ on $X_{n+3,(0)}$
 that have only linear base locus and for which \eqref{gg equations Xr large s} is satisfied. Indeed under this condition and \eqref{abundance condition}, $D$ is movable (see \cite[Theorem 5.3]{bdp3}) and its strict transform on $X_{n+3,(n-2)}$,  the iterated blow-up along the linear cycles, is globally generated by Theorem \ref{gg theorem Xr}.
Moreover  the lc condition on the pair $(X,\epsilon\Delta)$
 can be verified in the same manner as in the case with \eqref{gg equations Xr large s}, 
as in  Section
\ref{section abundance n+2}.
\end{remark}

From now on we will assume that $k_C(D)\ge1$.

\subsection{Constructing a log resolution of $(X,\epsilon D)$}\label{construction log resolution}

The following result computes the  pairwise intersections of the joins $\J(L_I, {\sigma}_{t})$.

\begin{proposition}\label{center of blow-up is smooth}
\label{snc} 
Let $\J(L_{I_1},\sigma_{t_1})$ and $\J(L_{I_2},\sigma_{t_2})$ be any join 
varieties of the same dimension $r_{I_1,\sigma_{t_1}}=r_{I_2,\sigma_{t_2}}$. If $2r_{I_1,\sigma_{t_1}}\le n-1$, 

 then
 $$\J(L_{I_1},\sigma_{t_1})\cap\J(L_{I_2},\sigma_{t_2})=\bigcup_{J}\J(L_J,\sigma_{t_J}),$$
for some subsets $J\subseteq I_1\cup I_2.$
\end{proposition}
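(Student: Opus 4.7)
The statement will be deduced from Proposition \ref{intersection of joins}, which handles the case of disjoint index sets. The proof therefore splits into two tasks: verifying the numerical hypotheses of that proposition when $I_1\cap I_2=\emptyset$, and reducing the general case to the disjoint one.

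\emph{Step 1 (disjoint case).} Assume $I_1\cap I_2=\emptyset$ and set $r:=r_{I_1,\sigma_{t_1}}=r_{I_2,\sigma_{t_2}}$. To apply Proposition \ref{intersection of joins} we must check that $r\le n-1$ and that $2r\le 2n-(|I_1|+|I_2|)$. The first is immediate from our hypothesis $2r\le n-1$. For the second, from $r=|I_i|+2t_i-1$ and $t_i\ge 0$ we get $|I_i|\le r+1$, hence $|I_1|+|I_2|\le 2r+2$, and so
$$
2r+(|I_1|+|I_2|)\le 4r+2\le 2(n-1)+2=2n,
$$
as required. Proposition \ref{intersection of joins} then delivers the decomposition as a union of joins $\J(L_J,\sigma_{t_J})$ with $J\subseteq I_1\cup I_2$.

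\emph{Step 2 (non-disjoint case).} Suppose $K:=I_1\cap I_2\ne\emptyset$. By associativity of the join construction,
$$
\J(L_{I_i},\sigma_{t_i})=\J\bigl(L_K,\J(L_{I_i\setminus K},\sigma_{t_i})\bigr),\qquad i=1,2,
$$
so both varieties are cones with common vertex $L_K$. Projecting $\PP^n$ from $L_K$ onto $\PP^{n-|K|}$ carries the rational normal curve $C$ to a rational normal curve $\bar C$ of degree $n-|K|$ through the images of the remaining $n+3-|K|$ points, which stay in linearly general position. Under this projection each $\J(L_{I_i},\sigma_{t_i})$ maps to $\J(L_{I_i\setminus K},\bar\sigma_{t_i})$, and these two images now have disjoint index sets. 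Moreover, setting $r':=r-|K|$, the bound $2r'\le(n-|K|)-1$ is inherited from $2r\le n-1$. Applying Step 1 inside $\PP^{n-|K|}$ expresses the intersection of the projected joins as a union of joins $\J(L_{\bar J},\bar\sigma_{t_J})$ with $\bar J\subseteq(I_1\cup I_2)\setminus K$. Taking preimages — equivalently, joining back with $L_K$ — yields the required decomposition upstairs, since the original varieties are cones with vertex $L_K$.

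\emph{Main obstacle.} The principal subtlety lies in Step 2, namely in justifying that projection from $L_K$ faithfully reduces the intersection problem to a lower-dimensional instance of itself. One must confirm that the projected curve $\bar C$ is a rational normal curve of the expected degree, that the remaining points remain in linearly general position in $\PP^{n-|K|}$, and that no spurious components appear when coning the lower-dimensional intersection back up from $L_K$. All of these are standard but need a careful dimension count to rule out accidental incidences introduced by the projection.
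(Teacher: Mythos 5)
Your proposal is correct and follows essentially the same strategy as the paper: split on whether $I_1\cap I_2$ is empty, verify the numerical hypotheses of Proposition \ref{intersection of joins} in the disjoint case (your inequality $4r+2\le 2n$ is exactly the paper's computation via $|I_i|\le r_{I_i,\sigma_{t_i}}+1$), and reduce the overlapping case by exploiting that both joins are cones with vertex $L_{I_1\cap I_2}$. The only cosmetic difference is that you realize this last reduction by projecting from $L_{I_1\cap I_2}$ onto $\PP^{n-|I_1\cap I_2|}$, whereas the paper stays in $\PP^n$, writes $\J(L_{I_i},\sigma_{t_i})=\J(\J(L_{I_i\setminus (I_1\cap I_2)},\sigma_{t_i}),L_{I_1\cap I_2})$ and checks that intersection distributes over joining with the common vertex — two phrasings of the same geometric fact, each requiring the same routine verifications you flag as the main obstacle.
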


\begin{proof}
Assume first of all that $I_1\cap I_2=\emptyset$. We have 
$$
0\le 2(n-1)-2\left(r_{I_1,\sigma_{t_1}}+r_{I_2,\sigma_{t_2}}\right),
$$
which gives

$$2r_{I_i,\sigma_{t_i}}\le 2n-(r_{I_1,\sigma_{t_1}}+1)-(r_{I_2,\sigma_{t_2}}+1),\ \forall i \in\{1,2\}.$$
Therefore, since $|I_i|\le   r_{I_i,\sigma_{t_i}}+1$, we 
have $2r_{I_i,\sigma_{t_i}}\le 2n-(|I_1|+|I_2|)$, $i=1,2$, hence we are in the hypotheses of Proposition \ref{intersection of joins}. This concludes the proof of the statement in the case when $I_1,I_2$ are disjoint. Indeed at each step of blow-up, the intersection of any two subvarieties that are being blown-up is a union of smaller subvarieties that have been previously blown-up.

\vskip.3cm

Assume now that $I_1\cap I_2=:I_{12}\ne \emptyset$. Set $I'_i:=I_i\setminus I_{12}$, for $i=1,2$. Notice that 
$$r_{I'_i,\sigma_{t_i}}=r_{I_i,\sigma_{t_i}}-|I_{12}|, \ \forall i\in\{1,2\}.$$
Moreover, 
\begin{equation}\label{join of join}
\J(\J(L_{I'_i},\sigma_{t_i}),L_{I_{12}})=\J(L_{I_i},\sigma_{t_i}), \ \forall i\in\{1,2\}.
\end{equation}
Also, 
$$\J(\J(L_{I'_1},\sigma_{t_1}),L_{I_{12}})\cap\J(\J(L_{I'_2},\sigma_{t_2}),L_{I_{12}})=
\J(\J(L_{I'_1},\sigma_{t_1})\cap\J(L_{I'_2},\sigma_{t_2}),L_{I_{12}}).$$
One proves the two last equalities of subvarieties using induction on $|I_{12}|$
 based on the case $|I_{12}|=0$ for which the statement is obvious.
With a similar argument as the one employed in the first case, we obtain that 
\begin{align*}
2r_{I'_i,\sigma_{t_i}}&\le 2n-(r_{I'_1,\sigma_{t_1}}+1)-(r_{I'_2,\sigma_{t_2}}+1)-4|I_{12}|,\\
& \le 2n-(|I'_1|+|I'_2|)),
\end{align*}
for $i=1,2$.
Therefore, 
\begin{equation}\label{intersection of joins extended}
\J(L_{I_1},\sigma_{t_1})\cap\J(L_{I_2},\sigma_{t_2})=
\J\left(\bigcup_J\J(L_J,\sigma_{t_J}),L_{I_{12}}\right)=\bigcup_J\J(L_{J\cup I_{12}},\sigma_{t_J}),
\end{equation}
where the union is taken over all subsets $J\subseteq I'_1\cup I'_2$ satisfying
$2|(I'_i\cup J)\setminus(I_i\cap J)|=|I'_1|+|I'_2|,\ \forall i\in\{1,2\},$
and for every such $J$, $t_J$ is the integer defined by the following equation
$2r_{J,\sigma_{t_J}}=2r_{I'_i, \sigma_{t_i}}-(|I'_1|+|I'_2|).$
The first equality of \eqref{intersection of joins extended} follows from Proposition
 \ref{intersection of joins}, the second inequality is an application of \eqref{join of join}.
\end{proof}

\begin{notation}\label{even odd notation}
For every $n$, let $\mathcal{T}:=\{(I,t):1\le r_{I,\sigma_t}\le n-2 \}$ be the set parametrizing all subvarieties  $\J(L_I, {\sigma}_{t})$ of $\PP^n$ of dimension between $1$ and $n-2$. 
\begin{enumerate} 
\item 
If $n$ is even, say $n=2\nu$, with $\nu\ge2$, 
we consider the following set
$$\mathcal{C}^{\textrm{even}}:=\{(I,t)\in\mathcal{T}: 2r_{I,\sigma_t}\le 2\nu-1 \textrm{ if } I\neq\emptyset\}.$$
In other terms, $(I,t)\in\mathcal{C}^{\textrm{even}}$ if $I=\emptyset$ and $1\le t<\nu$ or if  $I\neq\emptyset$ and $|I|+2t\le\nu$.
\item
If $n$ is odd, say 
$n=2\nu+1$, with $\nu\ge2$, we
consider the following set
$$\mathcal{C}^{\textrm{odd}}:=\{(I,t)\in\mathcal{T}: 2r_{I,\sigma_t}\le 2\nu \textrm{ if } I\neq\emptyset,\{1\}\}.$$
In other terms, $(I,t)\in\mathcal{C}^{\textrm{even}}$ if $I=\emptyset,\{1\}$ and $1\le t<\nu$ or if  $I\neq\emptyset,\{1\}$ and $|I|+2t\le\nu+1$.
\end{enumerate}

We consider $$\pi^\sigma:Y^\sigma\to X,$$   the iterated blow-up $Y^\sigma:=X^\sigma_{n+3,(n-2)}$ of $X:=X_{2\nu+3,(0)}$ (cf. Notation \ref{Ysigma}) along all subvarieties $\J(L_I,\sigma_t)$ such that $(I,t)\in\mathcal{C}^{\textrm{even}}$ 
if $n$ is even, and such that $(I,t)\in\mathcal{C}^{\textrm{odd}}$ if $n$ is odd, in increasing dimension.
We denote by $E_{I,\sigma_t}$ the corresponding  exceptional divisors.

Let us denote  $\Exc(\pi^\sigma)=\sum_{(I,t)\in\mathcal{C}^{\textrm{even}}}E_{I,\sigma_t}$
if $n$ is even and $\Exc(\pi^\sigma)=\sum_{(I,t)\in\mathcal{C}^{\textrm{odd}}}E_{I,\sigma_t}$ if $n$ is odd.
\end{notation}

\begin{proposition}\label{Ysigma smooth even}
For any $n$, the blown-up space $Y^\sigma$ is smooth. Moreover $\Exc(\pi^\sigma)$
is simple normal crossing.
\end{proposition}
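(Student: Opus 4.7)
The plan is to prove smoothness and the simple normal crossing property simultaneously by induction on the dimension of the centers being blown up. Writing $\pi^\sigma = \pi^\sigma_{n-2}\circ\cdots\circ\pi^\sigma_1$, where $\pi^\sigma_r : Y_r \to Y_{r-1}$ blows up (the strict transforms in $Y_{r-1}$ of) all joins $\J(L_I,\sigma_t)\in\mathcal{C}$ with $r_{I,\sigma_t}=r$, I take as inductive hypothesis that $Y_{r-1}$ is smooth and that the sum of exceptional divisors created so far is simple normal crossing. The base case $Y_0 := X = X_{n+3,(0)}$ is clear: it is smooth and the $n+3$ disjoint exceptional divisors $E_1,\dots,E_{n+3}$ form an SNC boundary.

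The inductive step reduces to verifying, for each $(I,t)\in\mathcal{C}$ with $r_{I,\sigma_t}=r$, the following three items:
\begin{enumerate}
\item[(i)] the strict transform of $\J(L_I,\sigma_t)$ in $Y_{r-1}$ is smooth;
\item[(ii)] strict transforms of distinct joins of dimension $r$ are pairwise disjoint in $Y_{r-1}$;
\item[(iii)] each such strict transform meets every previously created exceptional divisor transversally.
\end{enumerate}

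Item (ii) is where Proposition \ref{center of blow-up is smooth} is the main input. Whenever $2r \le n-1$, that result writes the intersection in $\PP^n$ of two joins of dimension $r$ as a union of smaller joins; a direct inspection of the defining inequalities of $\mathcal{C}^{\mathrm{even}}$ and $\mathcal{C}^{\mathrm{odd}}$ shows that all these smaller joins also belong to $\mathcal{C}$, so they have already been blown up by step $r-1$, and the corresponding strict transforms are disjoint in $Y_{r-1}$. The cases where $2r \ge n$ only occur for the pure secants $\sigma_t$ (and, in the odd-dimensional case, for joins $\J(L_{\{1\}},\sigma_{t'})$); a parity and dimension count against the constraints of $\mathcal{C}$ shows that at each such step $r$ there is at most one center being blown up, so (ii) is vacuous.

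Item (i) is the main obstacle. The singular locus of $\J(L_I,\sigma_t) \subset \PP^n$ is concentrated along the vertex $L_I$ (when $I \ne \emptyset$) and along the smaller join $\J(L_I,\sigma_{t-1})$ (when $t \ge 1$), both of which are lower-dimensional joins in $\mathcal{C}$ and have already been blown up by step $r-1$. The classical desingularization of the nested chain $\sigma_1 \subset \sigma_2 \subset \cdots \subset \sigma_t$ by successive blow-up shows that the strict transform of $\sigma_t$ after these blow-ups is smooth; the same argument extends to cones with linear vertex by first blowing up $L_I$, which turns $\J(L_I,\sigma_t)$ into a projective bundle over $L_I$. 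Item (iii) then follows from this iterative resolution together with the cone/projective-bundle structure: each exceptional divisor $E_{I',\sigma_{t'}}$ arising from a proper sub-join meets the strict transform of $\J(L_I,\sigma_t)$ cleanly in a smooth subvariety of the expected codimension. Combining (i)--(iii), the center of $\pi^\sigma_r$ is a disjoint union of smooth subvarieties of $Y_{r-1}$ meeting the existing SNC boundary transversally, so $Y_r$ is again smooth with SNC exceptional locus, closing the induction.
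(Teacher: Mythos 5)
Your argument follows the paper's proof in all essentials: the same induction over the dimension of the centers, with Proposition \ref{center of blow-up is smooth} supplying the disjointness of same-dimensional centers when $2r\le n-1$, and the observation that when $2r\ge n$ the only possible centers are the secants $\sigma_t$ (and, for $n$ odd, the pointed cones $\J(L_{\{1\}},\sigma_t)$), so that at each such step the center is unique; you are in fact more explicit than the paper about the smoothness of the individual strict transforms and the transversality with earlier exceptional divisors. The only slip is minor: after blowing up the vertex $L_I$, the strict transform of $\J(L_I,\sigma_t)$ is a bundle with linear fibres over (the strict transform of) $\sigma_t$, not a projective bundle over $L_I$ -- this does not affect the conclusion, since the relevant point is that the singular locus consists of smaller joins already blown up.
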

\begin{proof}
At each level of the blow-up, the center is a disjoint union of smooth subvarieties.
Indeed, if $\J(L_{I_1},\sigma_{t_1})$ and $\J(L_{I_2},\sigma_{t_2})$ are two
subvarieties such that  $2r_{I_1,\sigma_{t_1}}=2r_{I_2,\sigma_{t_2}}\le n-1$,
 then we conclude by Proposition \ref{snc}. 
Otherwise, if $2r_{I,\sigma_{t}}\ge n$, the only subvarieties that we blow-up 
 are the strict transforms of the secant varieties $\sigma_t$'s, with 
$\frac{\nu+1}{2}\le t\le\nu$, in the case $n=2\nu$, and the strict transforms 
of the $\sigma_t$'s and of the pointed cones $\J(L_{\{1\}},\sigma_t)$, with $\frac{\nu}{2}+1\le t\le\nu$, if $n=2\nu+1$. It is easy to see that such pointed cones intersect along smaller joins of the same form, that have been previously blown-up.
\end{proof}


\subsubsection{Properties of the strict transform of $|\epsilon D|$}

Denote by $\tilde{D}^\sigma$ the strict transform of $D$, as in \eqref{definition D sigma}:
\begin{equation}\label{D tilde sigma}
\tilde{D}^{\sigma}:=dH-\sum_{i}m_i E_i-\sum_{r=1}^{n-2}\sum_{\substack{I,t:\\ r_{I,\sigma_t}=r}}k_{I,\sigma_t}E_{I,\sigma_t}.
\end{equation}

\begin{remark}
Notice that, as in the only linearly obstructed case (handled in Section \ref{section abundance n+2}), the integer $r$ in the above summation of exceptional divisors ranges up to $n-2$, as $D$
is movable (cfr. \cite[Theorem 5.3]{bdp3}).
\end{remark}

We use the sets $\mathcal{C}^{\textrm{even}}$ and $\mathcal{C}^{\textrm{odd}}$ introduced in Notation \ref{even odd notation} in the following lemma.

\begin{lemma}\label{tildeD intersect exc}
For any $n$, $s=n+3$ and $D$ as in Theorem \ref{abundance Q}, if 
 $k_{I,\sigma_t}>0$, then $2r_{I,\sigma_t}< n-1$. In particular such a pair 
$(I,t)\in\mathcal{T}$ belongs to $\mathcal{C}^{\textrm{even}}$ if $n$ is even, and to $\mathcal{C}^{\textrm{odd}}$, if $n$ is odd.
\end{lemma}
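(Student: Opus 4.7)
The plan is to establish the uniform estimate
\[
\epsilon\,k_{I,\sigma_t}^{\mathrm{raw}}\;\le\;n-1-2\,r_{I,\sigma_t}\qquad\text{for every }(I,t)\in\mathcal{T},
\]
where $k_{I,\sigma_t}^{\mathrm{raw}}$ denotes the signed quantity inside the maximum in \eqref{expanded}, so that $k_{I,\sigma_t}=\max\{0,k_{I,\sigma_t}^{\mathrm{raw}}\}$. Granting this, the first assertion is immediate: $k_{I,\sigma_t}>0$ forces $k_{I,\sigma_t}^{\mathrm{raw}}>0$, hence $n-1-2r_{I,\sigma_t}>0$, i.e.\ $2r_{I,\sigma_t}<n-1$.

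Before starting the computation I would record the two numerical consequences of \eqref{abundance condition} that drive the proof. The first is the pairwise upper bound $\epsilon m_i+\epsilon m_j\le\epsilon d+(n-3)$ for all $i\ne j$, which is a rewriting of \eqref{abundance condition}. The second is the lower bound $\epsilon d\ge n+1$, obtained by combining the pairwise bound with $\epsilon m_i,\epsilon m_j\ge n-1$ (this is the bound already used implicitly in Remarks \ref{movability} and \ref{movable'}).

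The core of the argument is a weighted sum of the pairwise bound along a multigraph $G$ on the vertex set $\{1,\dots,n+3\}$ in which each $i\in I$ has degree $2(t+1)$ and each $i\notin I$ has degree $2t$. Such a loopless multigraph exists: the total degree $2(|I|+t(n+3))$ is even and the maximum degree $2(t+1)$ does not exceed the sum of the others, the latter reducing to $|I|+(n+1)t\ge 2$, which is automatic for $(I,t)\in\mathcal{T}$ when $n\ge 3$. Summing the pairwise bound over the $|I|+t(n+3)$ edges and subtracting $2((n+1)t+|I|-1)\,\epsilon d$ from both sides gives
\[
2\epsilon\,k_{I,\sigma_t}^{\mathrm{raw}}\;\le\;\bigl(|I|+t(n+3)\bigr)(n-3)\;+\;\bigl(2-|I|-(n-1)t\bigr)\,\epsilon d.
\]
The coefficient of $\epsilon d$ on the right is non-positive (equivalent to $|I|+(n-1)t\ge 2$, again implied by $(I,t)\in\mathcal{T}$), so substituting $\epsilon d\ge n+1$ and expanding collapses the right-hand side to exactly $2(n-1-2r_{I,\sigma_t})$, yielding the desired estimate.

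For the ``in particular'' clause I would just unfold Notation \ref{even odd notation}: using $r_{I,\sigma_t}=|I|+2t-1$, the bound $2r_{I,\sigma_t}<n-1$ translates into $|I|+2t\le\lfloor(n-1)/2\rfloor$, which immediately places $(I,t)$ in $\mathcal{C}^{\mathrm{even}}$ when $n=2\nu$ and in $\mathcal{C}^{\mathrm{odd}}$ when $n=2\nu+1$; the corner cases $I=\emptyset$ or $\{1\}$ are handled by the constraints on $t$ already built into $\mathcal{T}$. The main obstacle is the bookkeeping in the multigraph step---realizing the prescribed degree sequence and verifying that the induced edge sum matches the $m$-coefficients appearing in $k_{I,\sigma_t}^{\mathrm{raw}}$---after which only elementary arithmetic using $\epsilon d\ge n+1$ is required.
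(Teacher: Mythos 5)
Your proposal is correct, and it arrives at the same key inequality as the paper, namely (the untruncated form of) \eqref{low mult of joins}: $\epsilon k_{I,\sigma_t}\le\max\{0,\,n-1-2r_{I,\sigma_t}\}$, from which the lemma follows at once since $\epsilon>0$. The route differs genuinely in its bookkeeping. The paper proceeds iteratively: it first bounds $\epsilon k_C\le n-3$ and $\epsilon(k_C-d)\le-4$ by grouping the multiplicities into pairs and applying \eqref{abundance condition} together with the derived bound $\epsilon(m_i-d)\le-2$, then propagates these through $k_{\sigma_t}=\max\{0,k_C+(t-1)(k_C-d)\}$, and finally adds the vertex contribution $\sum_{i\in I}m_i-|I|d$. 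You instead perform a single weighted sum of the pairwise bound $\epsilon(m_i+m_j)\le\epsilon d+(n-3)$ over the edges of a loopless multigraph with degrees $2(t+1)$ on $I$ and $2t$ off $I$, closing the estimate with the other derived bound $\epsilon d\ge n+1$. I verified the three nontrivial checkpoints: the degree sequence is realizable exactly when $|I|+(n+1)t\ge2$, which holds on $\mathcal{T}$; the coefficient $2-|I|-(n-1)t$ of $\epsilon d$ is non-positive on $\mathcal{T}$ for $n>3$; and the resulting right-hand side $(|I|+t(n+3))(n-3)+(2-|I|-(n-1)t)(n+1)$ does collapse to $2(n+1-2|I|-4t)=2(n-1-2r_{I,\sigma_t})$. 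Your version is more uniform (one computation covering all $(I,t)$ simultaneously), while the paper's is more elementary to follow step by step; both rest only on \eqref{abundance condition}.

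One small correction in the ``in particular'' step: for $n=2\nu$ the inequality $2r_{I,\sigma_t}<n-1$ translates into $|I|+2t\le\nu$, not into $|I|+2t\le\lfloor(n-1)/2\rfloor=\nu-1$; your formula is right only for odd $n$. This does not affect the conclusion, since membership of a pair $(I,t)\in\mathcal{T}$ with $I\ne\emptyset$ in $\mathcal{C}^{\textrm{even}}$ only requires $2r_{I,\sigma_t}\le2\nu-1$, i.e.\ $|I|+2t\le\nu$.
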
 
 \begin{proof}
Let us  compute
\begin{align*}
\epsilon k_C& =\epsilon\max\left\{0, \left(\sum_{i=1}^{n-3}m_i-(n-3)d\right)
+(m_{n-2}+m_{n-1}-d)\right.+\\
& \ \  \ + (m_{n}+m_{n+1}-d)+(m_{n+2}+m_{n+3}-d)\Bigg\}\\ 
& \le-2(n-3)+3(n-2)
\\ & =n-3,
\end{align*}
where $k_C:=k_{\sigma_1}$ is the multiplicity of containment of the rational normal curve 
of degree $n$. If $k_C>0$, we also compute
\begin{align*}
\epsilon (k_C-d)&=\epsilon\left(\sum_{i=1}^{n-1}m_i-(n-1)d
+(m_{n}+m_{n+1}-d)+(m_{n+2}+m_{n+3}-d)\right)\\  
& \le-2(n-1)+2(n-2)\\ & =-4.
\end{align*}
Both of the above inequalities follow from  \eqref{abundance condition}, second line, and the inequality \eqref{extra nef condition}, computed
in the proof of Theorem  \ref{abundance Q}.
Hence, for every $t\ge0$, we obtain
\begin{align*}
\epsilon k_{\sigma_t}& =\epsilon\max\left\{0, k_C+(t-1)(k_C-d)\right\}\\
& \le \max\{0, n-3-4(t-1)\}.
\end{align*}
Furthermore, for every $I$ with $|I|\ge0$ and $t\ge0$, we obtain
\begin{align*}
\epsilon k_{I,\sigma_t}&=\epsilon\max\left\{0,k_{\sigma_t}+\sum_{i\in I}m_i-(|I|)d\right\}\\
& \le\max\{0,
(n-3-4(t-1))-2|I|\}\\ & =\max\{0,(n-|I|-2t+1)-(|I|+2t)\}.
\end{align*}

Hence, for every variety $\J(L_I,\sigma_t)\subset\PP^n$ 
with $r_{I,\sigma_t}=|I|+2t-1\le n$, we obtain
\begin{equation}\label{low mult of joins}
\epsilon k_{I,\sigma_t}\le \max\{0,n-1-2r_{I,\sigma_t}\}.
\end{equation}
\end{proof} 
 
\begin{corollary}\label{tildesigma transversal intersection}
In the same notation as Lemma \ref{tildeD intersect exc},
the divisor $\tilde{D}^\sigma$ intersects transversally each exceptional divisors.
\end{corollary}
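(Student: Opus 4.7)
The plan is to argue that $\pi^\sigma \colon Y^\sigma \to X$ realises a log resolution of the pair $(X, D)$ for a general member $D$ of $|dH-\sum m_i E_i|$, and then to extract transversality of the strict transform with every exceptional divisor by an inductive application of Bertini's theorem to the blow-up sequence. First I would combine two already established facts. Lemma \ref{tildeD intersect exc} tells us that every join $\J(L_I,\sigma_t)$ of codimension at least two sitting in $\Bs(|D|)$ with positive multiplicity satisfies $(I,t)\in\mathcal{C}^{\textrm{even}}$ (or $\mathcal{C}^{\textrm{odd}}$), and is therefore actually blown up in the construction of $Y^\sigma$; by Remark \ref{movable'} the general $D$ has no divisorial base component, so $\tilde D^\sigma$ contains no exceptional divisor $E_{I,\sigma_t}$ as a component.

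Next I would proceed by induction on the steps $Y_k \to Y_{k-1}$ of the iterated blow-up, with inductive hypothesis that the intermediate strict transform $\tilde D_{k-1}$ meets transversally every exceptional divisor of $Y_{k-1}$. At the next step we blow up (the strict transform of) a join $Z=\J(L_I,\sigma_t)$, which is smooth by Proposition \ref{snc}. By Proposition \ref{infinitesimal base locus} applied to a general element of $|D|$, the multiplicity of $\tilde D_{k-1}$ at every point of $Z$ lying outside the previously blown-up smaller joins is exactly $k_{I,\sigma_t}$, so
\[
\tilde D_{k} \;=\; \pi_k^\ast \tilde D_{k-1} \;-\; k_{I,\sigma_t}\, E_Z
\]
and $\tilde D_k \cap E_Z$ is cut out on each fibre of $E_Z\to Z$ by the leading form of $D$ along $Z$, a degree-$k_{I,\sigma_t}$ section of $\Sym^{k_{I,\sigma_t}} N^\ast_{Z/Y_{k-1}}$ restricted to the appropriate open subset.

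Because $D$ varies in the general linear system $|dH-\sum m_i E_i|$ and the $n+3$ points are in general position, this leading form is a general section of the corresponding symmetric power of the conormal sheaf; Bertini's theorem then gives that the resulting hypersurface in $\mathbb{P}(N^\ast_{Z/Y_{k-1}})$ is smooth at each generic point of $Z$, whence $\tilde D_k$ meets $E_Z$ transversally there. Transversality at points where $E_Z$ meets older exceptional divisors follows from the simple normal crossings structure of $\Exc(\pi^\sigma)$ (Proposition \ref{Ysigma smooth even}) together with the inductive hypothesis applied to the restriction of $\tilde D_{k-1}$ to those divisors, using that a blow-up with smooth centre preserves the transversal intersection of strict transforms with a divisor it already meets transversally. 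The main technical obstacle is the verification that the natural restriction map from $H^0(|D|)$ to the space of degree-$k_{I,\sigma_t}$ initial forms along $Z$ is surjective so that Bertini genuinely applies; this should follow from a dimension count exploiting the general position of the base points and the bounds \eqref{low mult of joins} on $k_{I,\sigma_t}$, but it is the delicate point of the argument.
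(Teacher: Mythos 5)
The paper offers essentially no proof of this corollary: it is asserted as an immediate consequence of Lemma \ref{tildeD intersect exc}, whose only role is to guarantee (via the inequality \eqref{low mult of joins}) that every join $\J(L_I,\sigma_t)$ with $k_{I,\sigma_t}>0$ lies in $\mathcal{C}^{\textrm{even}}$ resp.\ $\mathcal{C}^{\textrm{odd}}$ and hence is actually among the blow-up centres of $\pi^\sigma$; the transversality itself is left implicit, resting on the generality of $D$ and on the fact that the multiplicities $k_{I,\sigma_t}$ are exact (Lemma \ref{base locus lemma II}, Proposition \ref{infinitesimal base locus}), exactly as in the linear case of Corollary \ref{log resolutions}. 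Your proposal therefore goes well beyond the paper: the reduction to the blow-up tower, the identification of $\tilde D_k\cap E_Z$ with the zero locus of the leading form of $D$ along $Z$, and the inductive handling of the older exceptional divisors are all details the paper never writes down, and the skeleton of your argument is the right way to make the assertion rigorous.

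That said, the step you yourself flag is a genuine gap, not a formality: from the generality of $D$ in $|dH-\sum m_iE_i|$ one cannot conclude that the initial form along $Z$ is a \emph{general} section of $\Sym^{k_{I,\sigma_t}}N^\ast_{Z/Y_{k-1}}$ — the image of the restriction map may be a small subspace, and Bertini applied to it only controls singularities away from its base locus, so your fibrewise smoothness claim does not follow without the surjectivity (or at least a base-point-freeness) statement you defer. Within the paper's own toolkit there is a cleaner way to close this: once Theorem \ref{base point free tilde-sigma-D} shows that $|\tilde D^\sigma|$ is base point free on the smooth variety $Y^\sigma$ (Proposition \ref{Ysigma smooth even}), restricting to each smooth $E_{I,\sigma_t}$ gives a base point free system whose general member is smooth by Bertini, and since $\tilde D^\sigma$ contains no $E_{I,\sigma_t}$ (Remark \ref{movable'} plus the exactness of the $k_{I,\sigma_t}$), transversality follows with no control of leading forms at all. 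The cost is a logical reordering — the corollary should be derived after the base point freeness theorem rather than before it — but that is a defect already present in the paper's exposition, not one introduced by you.
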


\begin{theorem}\label{base point free tilde-sigma-D}
For any $n$, $s=n+3$ and $D$ as in Theorem \ref{abundance Q}. If the divisor $D$ 
is general, then $\tilde{D}^\sigma$ is base point free.
\end{theorem}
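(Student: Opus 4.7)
The plan is to establish base point freeness of $\tilde{D}^\sigma$ by a point-by-point case analysis on $q \in Y^\sigma$, following the general strategy of Theorem \ref{gg theorem Xr}. Because $D$ is movable by Remark \ref{movable'}, its general member is irreducible and the members of $|\tilde{D}^\sigma|$ are precisely the strict transforms of members of $|D|$; the task is thus to exhibit, for each $q \in Y^\sigma$, an element of $|D|$ whose strict transform misses $q$.

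First I would dispatch the generic case: if $q$ lies outside every exceptional divisor of $\pi^\sigma$, then $q' := \pi^\sigma(q)$ avoids all the join varieties $\J(L_I, \sigma_t)$ with $k_{I,\sigma_t} > 0$, which by Lemma \ref{base locus lemma II} is precisely $\Bs|D|$. A general $D_0 \in |D|$ does not pass through $q'$, and its strict transform provides the required element of $|\tilde{D}^\sigma|$ missing $q$.

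Next, for $q$ on some exceptional divisor, I would pick the minimal $E_{I,\sigma_t}$ containing $q$, i.e.\ one such that $q$ maps to a point of $\J(L_I,\sigma_t)$ not on any proper subjoin $\J(L_{I'},\sigma_{t'}) \subsetneq \J(L_I,\sigma_t)$. Proposition \ref{infinitesimal base locus} is the key input here: it says the generic multiplicity of a general member of $|D|$ at such a point is exactly $k_{I,\sigma_t}$, which matches the coefficient of $E_{I,\sigma_t}$ subtracted in \eqref{D tilde sigma}. Combined with the transversality of Corollary \ref{tildesigma transversal intersection}, this tells us that $\tilde{D}^\sigma$ restricts non-trivially and transversally to $E_{I,\sigma_t}$. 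I would then use the short exact sequence
\begin{equation*}
0 \to \mathcal{O}_{Y^\sigma}(\tilde{D}^\sigma - E_{I,\sigma_t}) \to \mathcal{O}_{Y^\sigma}(\tilde{D}^\sigma) \otimes \II_q \to \bigl(\mathcal{O}_{Y^\sigma}(\tilde{D}^\sigma) \otimes \II_q\bigr)\big|_{E_{I,\sigma_t}} \to 0
\end{equation*}
to reduce base point freeness at $q$ to two separate vanishings, arguing inductively on the lexicographic order of the blow-up centers and exploiting the bound $2r_{I,\sigma_t} < n-1$ of Lemma \ref{tildeD intersect exc} whenever $k_{I,\sigma_t} > 0$.

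The main obstacle will be the analysis of the restriction of $\tilde{D}^\sigma$ to an exceptional divisor $E_{I,\sigma_t}$ with $t \geq 1$, i.e.\ over the non-linear centers. Unlike the linear-cycle case handled in Lemma \ref{vanishing prelim}, the exceptional divisors over joins of secants have a more delicate projective-bundle structure, whose description (following \cite{am}) must be combined with the exact multiplicity statement of Proposition \ref{infinitesimal base locus} in order to compute the restriction intrinsically and verify base point freeness along the fibres. The codimension bound $2r_{I,\sigma_t} < n-1$ is what keeps the iterated blow-up combinatorics tractable, since it guarantees via Proposition \ref{snc} that distinct centers of the same dimension meet only along smaller (already blown-up) centers, so the induction set-up mirrors the linear-case argument in Theorem \ref{gg theorem Xr}.
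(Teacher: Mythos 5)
There is a genuine gap here: your plan defers exactly the step that is out of reach with the tools of this paper. The restriction--exact--sequence strategy of Theorem \ref{gg theorem Xr} works because, for blow-ups along \emph{linear} cycles, the kernel terms $D_{(r)}-E_I$ and the restrictions to the exceptional divisors are controlled by the vanishing theorems (Theorem \ref{extended vanishing}) and by the explicit product structure of the $E_I$'s from \cite{dp}. For the exceptional divisors $E_{I,\sigma_t}$ with $t\ge 1$ lying over the joins $\J(L_I,\sigma_t)$, neither ingredient is available: there is no analogue of Theorem \ref{extended vanishing} on $Y^\sigma$ (the vanishing $h^i(\tilde{D}^\sigma)=0$ is precisely the open Question \ref{question sigma}, and the paper explicitly says a proof ``would rely on the study of the normal bundles to the joins,'' deferred to future work), and no computation of $\tilde{D}^\sigma|_{E_{I,\sigma_t}}$ is given anywhere. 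You flag this as ``the main obstacle'' but do not overcome it, so the argument does not close.

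The paper takes a different route that avoids this entirely. It writes $D=D'+\alpha\Sigma$ (for $n$ even; $D=D'+\beta\Gamma$ for $n$ odd), where $\Sigma$ is the fixed divisor given by the strict transform of $\sigma_\nu$ and $\alpha$ is chosen in the window \eqref{definition alpha} so that $D'$ is effective with \emph{only linear} base locus (Proposition \ref{Delta' smooth even}). Then $\tilde{D'}^\sigma$ is base point free by the already-proved Theorem \ref{gg theorem Xr}, and Proposition \ref{linear series of strict transform even} gives an inclusion of linear series forcing $\Bs|\tilde{D}^\sigma|\subset\tilde{\Sigma}\cup\bigcup E_{I,\sigma_t}$. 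The residual base points are then excluded not by a cohomological induction but by Proposition \ref{infinitesimal base locus}: a base point on $\tilde\Sigma$ or on an exceptional divisor would mean the general member of $|D|$ has multiplicity strictly greater than $k_{I,\sigma_t}$ at some point of the corresponding join, contradicting the exact-multiplicity statement. If you want to salvage your approach, you would first need to prove the vanishing statements on $Y^\sigma$ that the paper leaves open; as written, your proof presupposes them.
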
 

We will show Theorem \ref{base point free tilde-sigma-D}  in Section
\ref{proof base point freeness D case even} (if $n$ is even) and in Section 
\ref{proof base point freeness D case odd} (is $n$ is odd).
The next corollary follows from  Bertini's Theorem.

\begin{corollary}\label{general tildesigma smooth}
In the notation of Theorem \ref{base point free tilde-sigma-D}, the general element of $|\tilde{D}^\sigma|$ is smooth.
\end{corollary}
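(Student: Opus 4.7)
The plan is to deduce smoothness directly from Bertini's theorem, applied to the base-point-free linear system $|\tilde{D}^\sigma|$. The two prerequisites have already been established: the ambient variety $Y^\sigma = X^\sigma_{n+3,(n-2)}$ is smooth by Proposition \ref{Ysigma smooth even}, and the linear system $|\tilde{D}^\sigma|$ has empty base locus by Theorem \ref{base point free tilde-sigma-D}. Recall also that by assumption the ground field $K$ has characteristic zero, which is essential for the classical form of Bertini's theorem.

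Concretely, I would present the argument as follows. Set $N := \dim |\tilde{D}^\sigma|$ and consider the morphism
\[
\phi: Y^\sigma \longrightarrow \PP^N
\]
defined by the complete linear system $|\tilde{D}^\sigma|$, which is a morphism (not merely a rational map) precisely because $\Bs|\tilde{D}^\sigma| = \emptyset$. Every element of $|\tilde{D}^\sigma|$ is the scheme-theoretic preimage $\phi^{-1}(H)$ of some hyperplane $H \subset \PP^N$. The classical Bertini theorem in characteristic zero then ensures that for a general hyperplane $H$, the preimage $\phi^{-1}(H)$ is smooth, because $Y^\sigma$ is smooth and the image $\phi(Y^\sigma)$ is not contained in any hyperplane. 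This gives smoothness of the general element of $|\tilde{D}^\sigma|$.

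I do not anticipate a genuine obstacle in this step: the substantive work has already been carried out upstream, in verifying that the iterated blow-up center decomposition yields a smooth $Y^\sigma$ with simple normal crossing exceptional divisor (Proposition \ref{Ysigma smooth even}), and in proving base-point-freeness via the two case analyses referenced in Sections \ref{proof base point freeness D case even} and \ref{proof base point freeness D case odd}. The only minor point worth remarking on in the write-up is that one could equivalently argue without introducing $\phi$, by citing the version of Bertini that says: on a smooth variety in characteristic zero, a general member of a linear system is smooth away from the base locus of the system. Since in our case that base locus is empty, this yields the conclusion in one line.
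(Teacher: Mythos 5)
Your argument is correct and coincides with the paper's: the corollary is deduced from Bertini's theorem in characteristic zero, applied to the base-point-free system $|\tilde{D}^\sigma|$ on the smooth variety $Y^\sigma$, exactly as you describe. No further comment is needed.
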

 
\subsection{Proof of Theorem \ref{base point free tilde-sigma-D}, case $n$ even}
\label{proof base point freeness D case even}

Write $n=2\nu$, $\nu\ge2$.
Recall that the strict transform on $X:=X_{2\nu+3,(0)}$ of the secant variety 
$\sigma_\nu\subset\PP^{2\nu}$ is the fixed divisor
$$
\Sigma:=(\nu+1)H-\nu\sum_{i=1}^{2\nu+3}E_i.
$$

\begin{proposition}
In the above notation, the strict transform $\tilde{\Sigma}^\sigma$ of $\Sigma$ on $Y^\sigma$ is smooth. 
\end{proposition}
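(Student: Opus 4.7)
The plan is to realize $\tilde{\Sigma}^\sigma$ as the iterated proper transform of $\Sigma$ under the sequence of blow-ups defining $\pi^\sigma : Y^\sigma \to X$, and to verify smoothness one stage at a time.

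First I would compute, via Lemma \ref{base locus lemma II}, the multiplicity of containment of each join $\J(L_I,\sigma_t)$ in $\Sigma$. Specialising $d=\nu+1$, $m_i=\nu$ in \eqref{expanded}, one finds
$$k_{I,\sigma_t}(\Sigma)=\nu+1-t-|I|,$$
which is strictly positive exactly in the range $|I|+2t\le\nu$, i.e.\ on the joins being blown up in the construction of $Y^\sigma$ by Notation \ref{even odd notation}. In particular $\Sigma$ has multiplicity $\nu$ along $C=\sigma_1$, multiplicity $2$ along $\sigma_{\nu-1}$, and is smooth at points of $\PP^{2\nu}$ lying outside the union of the proper joins $\J(L_I,\sigma_t)$ with $(I,t)\in\mathcal{C}^{\mathrm{even}}$. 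Thus the singular locus of $\Sigma$ on $X$ is covered by the strict transforms of the centres of $\pi^\sigma$.

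Next, by Proposition \ref{snc}, at each stage of the iterated blow-up the centre is a disjoint union of smooth subvarieties, because pairwise intersections of same-dimensional joins in $\mathcal{C}^{\mathrm{even}}$ consist of smaller-dimensional joins that have already been blown up. Hence each blow-up is a standard blow-up along a smooth subvariety of a smooth ambient (cf.\ Proposition \ref{Ysigma smooth even}), and the proper transform of $\Sigma$ is well-defined at every stage. The final step is to track the drop in multiplicity: at the first level, blowing up $C$ strips off the exceptional factor with multiplicity $\nu=k_{\emptyset,\sigma_1}(\Sigma)$ and the resulting strict transform is smooth at a general point of $E_{\emptyset,\sigma_1}$; by induction on the dimension of the centres, after the full sequence of blow-ups all remaining multiplicities $k_{I,\sigma_t}(\Sigma)$ have been absorbed by exceptional divisors and no further singular locus survives on the strict transform.

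The main obstacle is the local verification that blowing up a join $\J(L_I,\sigma_t)\subset \sigma_\nu$ with multiplicity $k_{I,\sigma_t}(\Sigma)$ actually produces a smooth strict transform of $\Sigma$ along the new exceptional divisor, and does not create new singularities. This can be handled by exploiting the classical description of $\sigma_\nu$ as the image of a $\PP^{\nu-1}$-bundle over $\mathrm{Sym}^\nu C\cong\PP^\nu$: in the affine cone picture, a general point of $\J(L_I,\sigma_t)$ corresponds to a configuration of $t$ pairs of points on $C$ and $|I|$ marked points, and the local branches of $\sigma_\nu$ meeting at such a point are separated precisely by the blow-ups prescribed in $\mathcal{C}^{\mathrm{even}}$. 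An equivalent, perhaps slicker, route would be to appeal to the construction of Araujo--Massarenti \cite[Section~5]{am}, whose log Fano model $X^\sigma_{n+3,(n-2)}$ was designed exactly to resolve these secant singularities, so that $\tilde{\Sigma}^\sigma$ can be identified with (a strict transform in) the smooth exceptional locus they produce.
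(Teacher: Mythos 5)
Your overall strategy is the same as the paper's: specialise the formula \eqref{expanded} to $d=\nu+1$, $m_i=\nu$ to get $k_{I,\sigma_t}(\Sigma)=\max\{0,\nu-|I|-t+1\}$, observe that the singular locus of $\Sigma$ is exhausted by the joins of positive multiplicity, and conclude that the blow-ups in $\pi^\sigma$ resolve it. However, there is a concrete error in the step where you match the positive-multiplicity joins with the blow-up centres. The quantity $\nu+1-t-|I|$ is strictly positive exactly when $|I|+t\le\nu$, not when $|I|+2t\le\nu$, and neither of these sets coincides with $\mathcal{C}^{\mathrm{even}}$ of Notation \ref{even odd notation} (which, for $I\neq\emptyset$, requires $|I|+2t\le\nu$, but for $I=\emptyset$ allows all $1\le t<\nu$). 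The discrepancy hits exactly the crucial stratum: the reduced singular locus $\sigma_{\nu-1}$ corresponds to $(I,t)=(\emptyset,\nu-1)$, for which $|I|+2t=2\nu-2>\nu$, so under your claimed equivalence $\Sigma$ would not even be singular there — and indeed two lines later you state (correctly) that $\Sigma$ has multiplicity $2$ along $\sigma_{\nu-1}$, contradicting your own range. What actually needs to be said is that the strata with $k_{I,\sigma_t}(\Sigma)>1$, i.e.\ $|I|+t\le\nu-1$, are all contained in centres of $\pi^\sigma$ (e.g.\ $\J(L_I,\sigma_t)\subseteq\sigma_{|I|+t}$, and $\sigma_{|I|+t}$ is blown up since $|I|+t<\nu$), which is the content of the paper's assertion that the singular locus of $\sigma_\nu$ is the non-reduced union of the joins with $k_{I,\sigma_t}(\Sigma)>1$, cf.\ \eqref{mult Sigma}.

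Beyond that slip, the step you yourself flag as ``the main obstacle'' — that blowing up a centre with multiplicity $k_{I,\sigma_t}(\Sigma)$ actually produces a smooth strict transform and creates no new singularities — is left as a plan (via the $\PP^{\nu-1}$-bundle over $\mathrm{Sym}^\nu C$, or by citing Araujo--Massarenti) rather than carried out, so the argument as written is a sketch with its hardest point deferred. To be fair, the paper's own proof is equally brief on this point: it simply records \eqref{mult Sigma}, asserts the description of $\mathrm{Sing}(\sigma_\nu)$ as the non-reduced union of the joins with multiplicity $>1$, and concludes that since all such subvarieties lie in the centres of $\pi^\sigma$, the map is a resolution of $\Sigma$. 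So your route is not different in kind; it just needs the positivity range corrected and the covering of the singular strata by the centres stated accurately.
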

\begin{proof}
Recall that the singular locus of  $\sigma_\nu\subset\PP^{2\nu}$ is $\sigma_{\nu-1}\subset\sigma_\nu$ and, more precisely, the non-reduced union
of all joins $\J(L_I,\sigma_t)$ such that $k_{I,\sigma_t}(\Sigma)>1$. 
In particular we compute the following multiplicities. 
For every $I$ and $t$ such that $|I|,t\ge0$,  we have 
\begin{equation}\label{mult Sigma}
k_{I,\sigma_t}(\Sigma)=\max\{0,\nu-|I|-t+1\}.
\end{equation}
All subvarieties $\J(L_I,\sigma_t)$ such that $|I|+t\le \nu$ have been blown-up,
hence we conclude that $\pi^\sigma$ is a resolution of the singularities of $\Sigma$.
\end{proof}

Choose $\alpha\in\mathbb{N}$ such that
\begin{equation}\label{definition alpha}
\frac{k_C(D)}{\nu}
\le \alpha\le \min_{1\le i\le 2\nu+3}\left\{\frac{m_i}{\nu},d-m_i\right\}.
\end{equation}
\begin{lemma}
Under the assumptions of Theorem \ref{abundance Q}, such an integer $\alpha$ exists.
\end{lemma}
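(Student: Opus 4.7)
The strategy is to show that the closed interval $I := [k_C(D)/\nu,\ \min_i\{m_i/\nu,\ d - m_i\}]$ is non-empty and has length at least one, so it contains an integer $\alpha$.

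First I would extract from \eqref{abundance condition} the crucial inequality $d - m_i \ge 2/\epsilon$ for every $i$: subtracting $\epsilon m_j \ge n-1$ from $\epsilon(m_i+m_j-d) \le n-3$ gives $\epsilon(m_i - d) \le -2$, whence $\epsilon d \ge \epsilon m_i + 2 \ge n+1 = 2\nu+1$.

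Next, exploiting the fact that $n = 2\nu$ is even so $|\{1,\ldots,n+3\}\setminus\{i\}| = 2(\nu+1)$, I would partition this index set into $\nu+1$ disjoint pairs and sum the bound $m_p + m_q \le d + (n-3)/\epsilon$ coming from \eqref{abundance condition} to obtain
\[
\sum_{j \ne i} m_j \le (\nu+1)\left(d + \frac{n-3}{\epsilon}\right).
\]
A short calculation using $\epsilon d \ge 2\nu+1$ then yields both desired upper bounds, $k_C(D) \le m_i$ and $k_C(D) \le \nu(d - m_i)$, so the interval $I$ is non-empty. Keeping track of the slack in these estimates upgrades them to the quantitative gaps
\[
m_i - k_C(D) \ge 2/\epsilon \quad \text{and} \quad \nu(d - m_i) - k_C(D) \ge 3/\epsilon,
\]
whence the length of $I$ is at least $2/(\nu\epsilon)$.

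Since $\epsilon \ll 1$ in the ambient Theorem \ref{abundance Q n+3}, we have $2/(\nu\epsilon) > 1$, so $I$ must contain an integer $\alpha$. The main obstacle is the quantitative sharpness of the pairing argument: the threshold $\epsilon d \ge (\nu+1)(2\nu-3)/(\nu-1) = 2\nu + 1 - 2/(\nu-1)$ that one extracts from the pairing bound is only just implied by $\epsilon d \ge 2\nu + 1$, so the parity hypothesis that lets one pair up the remaining indices is used essentially. For $\nu = 2$ some of these inequalities collapse (the coefficient $\nu - 2$ vanishes), and that borderline case should be verified by hand.
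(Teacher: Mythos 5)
Your proof is correct and follows essentially the same route as the paper's: both reduce to showing the interval has length at least one via the gaps $m_i - k_C(D) \ge 2/\epsilon$ and $\nu(d-m_i) - k_C(D) \ge 3/\epsilon$, under the identical smallness requirement $\epsilon \le 2/\nu$. The only cosmetic difference is that you re-derive the needed bound on $k_C(D)$ by pairing the $2\nu+2$ indices of $S\setminus\{i\}$, whereas the paper simply invokes the already-established inequality $\epsilon k_C(D)\le n-3$ from \eqref{low mult of joins}; your flagged borderline case $\nu=2$ indeed reduces to $0\ge 0$ and is not a gap.
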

\begin{proof}
It is enough to prove that $\frac{k_C(D)}{\nu}\le \frac{m_i}{\nu}-1$ and that 
$\frac{k_C(D)}{\nu}\le d-m_i-1$ for all $i\in\{1\dots,2\nu+3\}$.
The first statement follows from the following computation: 
$$
\epsilon k_C(D)\le 2\nu-3\le 2\nu-1-\epsilon\nu\le \epsilon (m_i-\nu).
$$
The first inequality follows from \eqref{low mult of joins}. The second inequality follows from
the assumption 
 $\epsilon\ll1$; in fact it is enough to take $\epsilon\le\frac{4}{n}=\frac{2}{\nu}$. The last inequality follows from 
\eqref{abundance condition}. Similarly, one proves the second statement by observing that
$$
\epsilon k_C(D)\le 2\nu-3\le 2\nu-\epsilon\nu\le \epsilon \nu(d-m_i-1).
$$
The last inequality follows from \eqref{extra nef condition}.
\end{proof}

Consider the linear system 
$$
|D'|:=|D-\alpha \Sigma|=|d'H-\sum_{i=1}^{2\nu+3}m'_iE_i|,
$$
with $d':=d-\alpha(\nu+1)$ and $m'_i:=m_i-\alpha\nu$, for all $i\in\{1\dots,2\nu+3\}$.
We have
$$
|D'|+\alpha\Sigma\subseteq|D|. 
$$

\begin{proposition}\label{Delta' smooth even} 
In the above notation, the  linear system $|D'|$ is non-empty.
Moreover $|D'|$ has only linear base locus and  
 the strict transform in $Y^\sigma$ of $|D'|$  is base point free.
\end{proposition}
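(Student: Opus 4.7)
The plan is to exploit the bounds on $\alpha$ in \eqref{definition alpha} and reduce the whole proposition to Theorem \ref{gg theorem Xr}. Reading off \eqref{definition alpha}, one has $0\le m'_i\le d'$ directly, and a short substitution gives that the raw value $k_C(D)-\alpha\nu$ is $\le 0$, so $k_C(D')=0$ and $\sum m'_i\le nd'$. To verify that the base locus of $D'$ is only linear, I would check that the expression inside the max in \eqref{expanded} defining $k_{I,\sigma_t}(D')$ is non-positive for every $t\ge 1$, using the decomposition
\[
t\sum m'_i+\sum_{i\in I}m'_i-((n+1)t+|I|-1)d' = \left(t\left(\sum m'_i - nd'\right)-(t-1)d'\right)+\left(\sum_{i\in I}m'_i - |I|d'\right),
\]
both of whose summands are non-positive thanks to $\sum m'_i\le nd'$ and $m'_i\le d'$ respectively. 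Hence no join $\J(L_I,\sigma_t)$ with $t\ge 1$ appears in the base locus of $D'$.

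The main obstacle I anticipate is to bound the dimension of the linear base locus of $D'$ by $\nu-1$, concretely to show $k_I(D')=0$, equivalently $\sum_{i\in I}m'_i\le (|I|-1)d'$, for all $|I|\ge \nu+1$. Pairing the indices of $I$ and summing the pairwise bound $\epsilon(m_i+m_j-d)\le n-3$ from \eqref{abundance condition} gives $\sum_{i\in I}m_i\le \frac{|I|}{2}\bigl(d+\frac{n-3}{\epsilon}\bigr)$, so $\sum_{i\in I}m_i-(|I|-1)d\le \frac{1-|I|}{2}d+\frac{|I|(2\nu-3)}{2\epsilon}$. Combined with the derived inequality $\epsilon d\ge 2\nu+1$ (itself obtained by summing $\epsilon m_i\ge n-1$ twice and applying the pairwise bound), this is $\le 0$ whenever $|I|\ge \nu+1$. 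A parallel computation for $D'$, substituting $d=d'+\alpha(\nu+1)$, yields the same conclusion $\sum_{i\in I}m'_i\le (|I|-1)d'$ in that range.

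With these facts, Theorem \ref{gg theorem Xr} applied with $r=\nu-1$ to $D'$ concludes the proposition: the conditions \eqref{gg equations Xr} reduce to $0\le m'_i\le d'$ and $\nu d'\ge \sum_{i\in I}m'_i$ for $|I|=\nu+1$, both verified, and the large-degree bound \eqref{gg equations Xr large s} reduces to $\sum m'_i\le nd'$, since $b_0=0$ for $s=n+3$. This gives global generation of the strict transform of $D'$ on $X_{n+3,(\nu-1)}$, which in particular implies $|D'|$ is non-empty. The remaining blow-ups in $Y^\sigma\to X_{n+3,(\nu-1)}$ involve only joins $\J(L_I,\sigma_t)$ with $t\ge 1$ that, by the first paragraph, are not in the base locus of $D'$; hence the strict transform on $Y^\sigma$ is simply a pullback and remains base point free.
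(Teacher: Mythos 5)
Your overall strategy coincides with the paper's: compute $k_C(D')=0$ and $0\le m'_i\le d'$ from \eqref{definition alpha}, show $k_I(D')=0$ for $|I|\ge \nu+1$, and feed everything into Theorem \ref{gg theorem Xr}. Your explicit verification that no join $\J(L_I,\sigma_t)$ with $t\ge 1$ survives in $\Bs|D'|$ is correct and is a useful detail the paper leaves implicit. The problem is the step you yourself flag as the main obstacle. Your pairing argument for $D$ is fine (modulo the slip $\frac{1-|I|}{2}d$, which should be $\frac{2-|I|}{2}d$; the conclusion still holds since $(|I|-2)(2\nu+1)\ge |I|(2\nu-3)$ for $|I|\ge\nu+1$). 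But the ``parallel computation for $D'$'' does not go through as described: rerunning the same chain of inequalities on the primed data requires $\epsilon d'\ge n+1$, and this can fail badly because $d'=d-\alpha(\nu+1)$ and $\alpha$ may be chosen as large as $\min_i\{m_i/\nu,\,d-m_i\}$. For instance with $n=4$, $\epsilon=1/10$, $m_i=30$, $d=50$, $\alpha=15$, all hypotheses of Theorem \ref{abundance Q} hold and $k_C(D)=10$, yet $\epsilon d'=1/2$, so your bound $\frac{2-|I|}{2}d'+\frac{|I|(2\nu-3)}{2\epsilon}$ is strictly positive at $|I|=\nu+1$ even though $k_I(D')=0$ is true. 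The underlying difficulty is that $k_I(D')=k_I(D)+\alpha(|I|-\nu-1)$ with a \emph{non-negative} correction term in the range $|I|\ge\nu+1$, so non-positivity for $D$ does not transfer for free.

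The gap is repairable with what you already have: for $|I|=\nu+1$ the correction term vanishes, so $k_I(D')=k_I(D)\le 0$ follows from your unprimed computation, and this is the only cardinality Theorem \ref{gg theorem Xr} needs (larger $|I|$ then follow from the monotonicity $k_{I\cup\{j\}}(D')\le k_I(D')$, using $m'_j\le d'$). But that argument needs to be said; ``a parallel computation yields the same conclusion in that range'' is not justified and is false as a computation for $|I|>\nu+1$. The paper instead handles all $|I|\ge\nu+1$ at once by splitting $I=J\sqcup(I\setminus J)$ with $|J|=\nu$, bounding $\epsilon k_J(D)\le 0$ via \eqref{low mult of joins}, and absorbing the correction term using the upper constraint $\alpha\le d-m_i$ of \eqref{definition alpha}, which gives $\sum_{i\in I\setminus J}(m_i-d)+\alpha(|I|-\nu-1)\le-\alpha$. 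Either route works, but some use of the upper bound on $\alpha$ (beyond $m'_i\le d'$) or the vanishing of the correction at $|I|=\nu+1$ is indispensable, and your write-up currently contains neither.
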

\begin{proof}
For every $i\in\{1\dots,2\nu+3\}$, 
since by \eqref{definition alpha} we have 
$\alpha\le \frac{m_i}{\nu}$ and $\alpha\le d-m_i$,
then $m'_i\ge0$ and $d\ge m'_i$ respectively.
Moreover, let us compute 
\begin{align*}
k_C(D')&=\max\left\{0,\sum_{i=1}^{2\nu+3}(m_i-\alpha\nu)-2\nu(d-\alpha(\nu+1))\right\}\\
& =\max\left\{0,k_C(D)-\alpha\nu\right\}\\
&=0.
\end{align*}
The last equality follows from \eqref{definition alpha}: $\alpha\ge \frac{k_C(D)}{\nu}$.
This proves the first statement, namely that $\Delta'$ is effective, see \cite[Theorem 5.1]{bdp3}.

To prove the second statement,
notice that if $|I|\le\nu$, then the pair $(I,0)\in\mathcal{C}^{\textrm{even}}$, hence 
the corresponding linear subspace $L_I\subset\PP^{2\nu}$ has been blown-up.
Otherwise, if $|I|\ge\nu+1$, we claim that $k_I(D')=0$.  Theorem \ref{gg theorem Xr}
implies the second statement, namely that $\tilde{D}^\sigma$ is a globally generated divisor.
To prove the claim,  for every $I$ we choose $J\subset I$ with $|J|=\nu$ and
 we compute
\begin{align*}
\epsilon k_{I}(D)&=\max\left\{0, \epsilon\left(\sum_{i\in I}m_i-(|I|-1)d\right)+
\epsilon\alpha(|I|-\nu-1)\right\}\\
&=\max\left\{0,\epsilon\left(\sum_{i\in J}m_i-(\nu-1)d\right)+
\epsilon\left(\sum_{i\in I\setminus J}(m_i-d)\right)+\epsilon \alpha(|I|-\nu-1)\right\}\\
&\le \max\{0,-\epsilon \alpha\}\\ &=0.
\end{align*}
The inequality holds because 
$\epsilon \left(\sum_{i\in J}m_i-(\nu-1)d\right)\le0$ by \eqref{low mult of joins} and
$\alpha(|I|-\nu)\le \sum_{i\in I\setminus J}(d-m_i)$ by \eqref{definition alpha}.
\end{proof}

\begin{proposition}\label{linear series of strict transform even}
In the above notation, there exist non-negative numbers
 $\alpha_{I,\sigma_t}\in\mathbb{Z}$ such that 
$$
|\tilde{D'}^\sigma|+\alpha\tilde{\Sigma}^\sigma+\sum_{(I,t)\in\mathcal{C}^{\textrm{even}}}
\alpha_{I,\sigma_t} E_{I,\sigma_t}\subseteq 
|\tilde{D}^\sigma|.
$$
\end{proposition}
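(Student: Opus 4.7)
The strategy is to compute the difference $\tilde{D}^\sigma - \tilde{D'}^\sigma - \alpha \tilde{\Sigma}^\sigma$ as an explicit $\mathbb{Z}$-linear combination of exceptional divisors on $Y^\sigma$, and then check that every coefficient is non-negative. Since $D \sim D' + \alpha\Sigma$ as divisor classes on $X$, pulling back along $\pi^\sigma$ and rewriting both sides in terms of their strict transforms will force the difference to land in the subgroup of $\mathrm{Pic}(Y^\sigma)$ spanned by the exceptional divisors $E_{I,\sigma_t}$ with $(I,t)\in\mathcal{C}^{\textrm{even}}$. Once the coefficients are known, the claimed inclusion of linear systems will follow by adding an arbitrary element of $|\tilde{D'}^\sigma|$ to the effective fixed part $\alpha \tilde{\Sigma}^\sigma + \sum \alpha_{I,\sigma_t} E_{I,\sigma_t}$.

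Concretely, writing out each strict transform via \eqref{D tilde sigma} (and the analogous expansion for $\tilde{\Sigma}^\sigma$ using \eqref{mult Sigma}), and using $d = d'+\alpha(\nu+1)$ and $m_i = m'_i + \alpha\nu$, the coefficients of $H$ and $E_i$ in $\tilde{D'}^\sigma + \alpha \tilde{\Sigma}^\sigma$ match those of $\tilde{D}^\sigma$ exactly. What remains is the discrepancy along each $E_{I,\sigma_t}$, which is
\begin{equation*}
\alpha_{I,\sigma_t} := k_{I,\sigma_t}(D') + \alpha\, k_{I,\sigma_t}(\Sigma) - k_{I,\sigma_t}(D).
\end{equation*}

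The essential point is to verify $\alpha_{I,\sigma_t} \ge 0$. Let $q_{I,\sigma_t}(\cdot)$ denote the untruncated expression inside the $\max$ in \eqref{expanded}, so that $k_{I,\sigma_t}(\cdot) = \max\{0, q_{I,\sigma_t}(\cdot)\}$. Because $q_{I,\sigma_t}$ is a $\mathbb{Z}$-linear functional on the Picard lattice, the relation $D = D' + \alpha\Sigma$ yields $q_{I,\sigma_t}(D) = q_{I,\sigma_t}(D') + \alpha\, q_{I,\sigma_t}(\Sigma)$. Combining the two trivial inequalities $\max\{0,x\} \ge x$ and $\max\{0,x\} \ge 0$, we obtain
\begin{equation*}
k_{I,\sigma_t}(D') + \alpha\, k_{I,\sigma_t}(\Sigma) \;\ge\; \max\{0,\, q_{I,\sigma_t}(D') + \alpha\, q_{I,\sigma_t}(\Sigma)\} \;=\; k_{I,\sigma_t}(D),
\end{equation*}
which is exactly the non-negativity required.

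Having set up the identity $\tilde{D}^\sigma = \tilde{D'}^\sigma + \alpha\tilde{\Sigma}^\sigma + \sum \alpha_{I,\sigma_t} E_{I,\sigma_t}$ in $\mathrm{Pic}(Y^\sigma)$ with $\alpha_{I,\sigma_t} \in \mathbb{Z}_{\ge 0}$, any effective member of $|\tilde{D'}^\sigma|$ produces, upon adding the fixed effective divisor $\alpha \tilde{\Sigma}^\sigma + \sum \alpha_{I,\sigma_t} E_{I,\sigma_t}$, an effective divisor linearly equivalent to $\tilde{D}^\sigma$, hence a member of $|\tilde{D}^\sigma|$. The only non-routine aspect is the inequality above, which is really a soft fact about sub-additivity of truncation; the rest is bookkeeping of Picard-group classes. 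I do not foresee a serious obstacle here, and I expect the analogous statement for $n$ odd to go through by the same coefficient-comparison argument, replacing $\mathcal{C}^{\textrm{even}}$ by $\mathcal{C}^{\textrm{odd}}$ and $\Sigma$ by the appropriate divisor.
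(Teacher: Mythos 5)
Your proof is correct and reaches the same decomposition as the paper: both of you cancel the $H$- and $E_i$-coefficients using $d=d'+\alpha(\nu+1)$, $m_i=m'_i+\alpha\nu$, and land on exactly the same exceptional coefficients $\alpha_{I,\sigma_t}=k_{I,\sigma_t}(D')+\alpha\, k_{I,\sigma_t}(\Sigma)-k_{I,\sigma_t}(D)$ (the paper merely writes this in two cases, using that $k_{I,\sigma_t}(D')=0$ for $t\ge 1$ by Proposition \ref{Delta' smooth even}). Where you genuinely differ is in the non-negativity check. The paper proves the stronger pointwise inequality $\alpha\, k_{I,\sigma_t}(\Sigma)\ge k_{I,\sigma_t}(D)$ (its \eqref{Sigma is more singular than Delta}), deducing it from the numerical bound \eqref{low mult of joins}, which in turn rests on the hypotheses \eqref{abundance condition} on $\epsilon D$ and the explicit formula \eqref{mult Sigma} for $k_{I,\sigma_t}(\Sigma)$. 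You instead note that $k_{I,\sigma_t}=\max\{0,q_{I,\sigma_t}\}$ with $q_{I,\sigma_t}$ a linear functional on the Picard lattice, so that
$$
k_{I,\sigma_t}(D')+\alpha\, k_{I,\sigma_t}(\Sigma)\ \ge\ \max\bigl\{0,\,q_{I,\sigma_t}(D')+\alpha\, q_{I,\sigma_t}(\Sigma)\bigr\}\ =\ k_{I,\sigma_t}(D),
$$
using only $\alpha\ge 0$ and subadditivity of truncation. This is more elementary and more robust: it requires none of the $\epsilon$-smallness conditions for this step, it is insensitive to the case analysis on $|I|$ and $t$, and, as you anticipate, it transfers verbatim to the odd case (Proposition \ref{linear series of strict transform odd}) with $\Sigma$ replaced by $\Gamma$ and $\alpha$ by $\beta$. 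The only inputs you still take from the surrounding text are the ones the paper also uses: that the strict-transform coefficients along each $E_{I,\sigma_t}$ are indeed the numbers $k_{I,\sigma_t}$ from \eqref{expanded} (respectively \eqref{mult Sigma} for $\Sigma$), and that $\alpha\ge 0$ from \eqref{definition alpha}; both are legitimate to assume at this point.
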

\begin{proof} 
In order to prove the statement, we compare  the strict transform, 
$|\tilde{D'}^\sigma|+\alpha\tilde{\Sigma}^\sigma$  with that of $D$.

Recall that, for any $1\le t<\nu$ and $I$ such that $|I|\ge 0$, we have $k_{I,\sigma_t}(D')=0$ 
by Proposition \ref{Delta' smooth even}. Moreover for all $I$ and $t$,
  \eqref{low mult of joins} implies that 
\begin{equation}\label{Sigma is more singular than Delta}
k_{I,\sigma_t}(\alpha \Sigma)=\alpha k_{I,\sigma_t}(\Sigma)\ge k_{I,\sigma_t}(D).
\end{equation}

This, together with \eqref{mult Sigma} and the computation of $k_I(D)$ made in the
proof of Proposition \ref{Delta' smooth even}, implies the following 
linear equivalence of divisors
$$\tilde{D}^\sigma\sim \tilde{D'}^\sigma+\alpha\tilde{\Sigma}^\sigma+
\sum_{(I,t)\in\mathcal{C}^{\textrm{even}}}
\alpha_{I,\sigma_t}E_{I,\sigma_t},$$ 
where 
 $\alpha_{I}:=k_{I}(\alpha\Sigma)+k_{I}(D')
-k_{I}(D)$ 
if  $1\le |I|\le \nu$, while 
 $\alpha_{I,\sigma_t}:=k_{I,\sigma_t}(\alpha\Sigma)-k_{I,\sigma_t}(D)$ for 
$1\le t\le\nu$, $\nu+1\le |I|\le 2\nu$.
Finally, from \eqref{Sigma is more singular than Delta} we obtain $a_{I,\sigma_t}\ge0$,
for all $I$ and $t$.
\end{proof}

\begin{proof}[Proof of Theorem \ref{base point free tilde-sigma-D}, case $n$ even]
We recall that a general member of $|D|$ vanishes at every point of $J(L_I,\sigma_t)$ 
with multiplicity equal to $k_{I,\sigma_t}$ and that $|\tilde{D}^\sigma|$ is the
 strict transform of $D$ under the blow-up of its secant base locus. Assume that the base locus, $\Bs|\tilde{D}^\sigma|$, is non-empty.
By Proposition \ref{linear series of strict transform even}, we have
$$
\Bs|\tilde{D}^\sigma| \subset \Bs|\tilde{D'}^\sigma|\cup \tilde{\Sigma}\cup 
\bigcup_{(I,t)\in\mathcal{C}^{\textrm{even}}}E_{I,\sigma_t}.
$$

Note that the divisor $|\tilde{D'}^\sigma|$ is base point free by Theorem \ref{gg theorem Xr}, therefore  $\Bs|\tilde{D'}^\sigma|$ is empty. Hence
$$
\Bs|\tilde{D}^\sigma| \subset  \tilde{\Sigma}\cup 
\bigcup_{(I,t)\in\mathcal{C}^{\textrm{even}}}E_{I,\sigma_t}.
$$

 Assume there is a base point for $|\tilde{D}^\sigma|$ on one of the exceptional divisors $E_{I,\sigma_t}$, $(I,t)\in\mathcal{C}^{\textrm{even}}$, or on $\tilde{\Sigma}$.
This implies the existence of a point in a join 
$J(L_I,\sigma_t)$, or in $\Sigma$, on which the general divisor $D$ has infinitesimal tangencies. The proof of Proposition \ref{infinitesimal base locus} implies that no point of $J(L_I,\sigma_t)$, 
nor of $\Sigma$, carries any infinitesimal information. This leads to a contradiction.

\end{proof}

\subsection{Proof of Theorem \ref{base point free tilde-sigma-D}, case $n$ odd}
\label{proof base point freeness D case odd}

Recall that the strict transform on $X:=X_{2\nu+3,(0)}$ of the cone $\J(L_{\{1\}},\sigma_\nu)\subset\PP^{2\nu}$ is the divisor
$$
\Gamma:=(\nu+1)H-(\nu+1)E_1-\nu\sum_{i=2}^{2\nu+4}E_i.
$$

\begin{proposition}
In the above notation, the strict transform $\tilde{\Gamma}^\sigma$ of $\Gamma$ on $Y^\sigma$ is smooth. 
\end{proposition}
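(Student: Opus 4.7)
The plan is to mimic the proof of smoothness of $\tilde\Sigma^\sigma$ given just above. First, using Lemma~\ref{base locus lemma II}, I would compute the multiplicity of every potential singular stratum of $\Gamma$: substituting $d=\nu+1$, $m_1=\nu+1$ and $m_i=\nu$ for $i\ge 2$ into formula \eqref{expanded} (and using $\sum_i m_i=2\nu^2+4\nu+1$, $n+1=2\nu+2$) should yield
\begin{equation*}
k_{I,\sigma_t}(\Gamma)=\max\bigl\{0,\, \nu+1-t-|I|+\delta_{1\in I}\bigr\},
\end{equation*}
where $\delta_{1\in I}=1$ if $1\in I$ and $0$ otherwise. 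The asymmetry here reflects the fact that $\Gamma$ is a cone with distinguished vertex $p_1$.

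Consequently, the singular locus of $\Gamma$ in $X$ is the non-reduced union of the joins $\J(L_I,\sigma_t)$ with $k_{I,\sigma_t}(\Gamma)\ge 2$, namely those with $|I|+t\le\nu$ when $1\in I$ and those with $|I|+t\le\nu-1$ when $1\notin I$. The key observation is that every such stratum is either itself a cycle in $\mathcal{C}^{\mathrm{odd}}$ (Notation~\ref{even odd notation}) or is contained in one: indeed, if $1\notin I$ then $L_I\subset\sigma_{|I|}$ gives $\J(L_I,\sigma_t)\subset\sigma_{|I|+t}$, and $(\emptyset,|I|+t)\in\mathcal{C}^{\mathrm{odd}}$ since $|I|+t\le\nu-1$; if $1\in I$ with $|I|\ge 2$, writing $I=\{1\}\cup I'$ yields $\J(L_I,\sigma_t)\subset\J(L_{\{1\}},\sigma_{|I'|+t})$, with $(\{1\},|I'|+t)\in\mathcal{C}^{\mathrm{odd}}$ because $|I'|+t=|I|+t-1\le\nu-1$; the remaining cases $I=\emptyset$ and $I=\{1\}$ belong to $\mathcal{C}^{\mathrm{odd}}$ directly. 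Since $\pi^\sigma$ blows up the cycles of $\mathcal{C}^{\mathrm{odd}}$ in order of increasing dimension, and by Proposition~\ref{Ysigma smooth even} each centre of blow-up is a disjoint union of smooth subvarieties, the composition $\pi^\sigma$ provides a resolution of the singularities of $\Gamma$, whence $\tilde\Gamma^\sigma$ is smooth.

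The delicate step will be confirming that the blow-ups along the ``secant'' cycles $\sigma_r$ and the ``pointed cone'' cycles $\J(L_{\{1\}},\sigma_r)$ in $\mathcal{C}^{\mathrm{odd}}$ are indeed enough to resolve the singularities of $\Gamma$ along every intermediate stratum $\J(L_I,\sigma_t)$ contained in them. This requires a local tangent-cone analysis of $\Gamma$ along each such stratum, entirely parallel to the one implicit in the even case treatment of $\tilde\Sigma^\sigma$.
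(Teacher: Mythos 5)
Your multiplicity computation agrees with the paper's: applying formula \eqref{expanded} to $\Gamma$ gives $k_{I,\sigma_t}(\Gamma)=\max\{0,\nu-|I|-t+1+\delta_I\}$, where $\delta_I$ is the indicator of $1\in I$, and the singular strata are exactly the joins along which this number is at least $2$. Up to that point you are on the paper's track. The problem is the final step, which you yourself flag as ``the delicate step'' and then do not carry out: showing that a singular stratum $\J(L_I,\sigma_t)$ which is \emph{not} itself a centre of blow-up, but is merely \emph{contained} in a centre ($\sigma_{|I|+t}$, or $\J(L_{\{1\}},\sigma_{|I|+t-1})$), stops being singular on the strict transform after $\pi^\sigma$. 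Set-theoretic containment of the singular locus in the union of the centres does not give this: blowing up a variety $Z$ along which $\Gamma$ has some multiplicity does not automatically smooth $\Gamma$ along a proper subvariety $W\subsetneq Z$ where the multiplicity may jump, and since $\pi^\sigma$ proceeds in increasing dimension, the lower-dimensional strata you worry about are reached \emph{before} the larger secant or cone centres that contain them. As this is precisely the content of the proposition, the argument is incomplete.

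For comparison, the paper closes this step differently: it asserts that every join $\J(L_I,\sigma_t)$ with $|I|+t\le\nu+\delta_I$ is itself one of the centres of $\pi^\sigma$, so that the whole (non-reduced) singular locus is blown up directly rather than merely covered. (Whether that assertion is consistent with the inequality $|I|+2t\le\nu+1$ defining $\mathcal{C}^{\textrm{odd}}$ for $I\neq\emptyset,\{1\}$ is a point one could legitimately press the authors on, and your containment analysis reads as an attempt to repair exactly that; but without either that assertion or the tangent-cone analysis you defer, smoothness of $\tilde{\Gamma}^\sigma$ is not established.)
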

\begin{proof}
Recall that the singular locus of  $\J(L_{\{1\}},\sigma_\nu)\subset\PP^{2\nu}$ is $\J(L_{\{1\}},\sigma_{\nu-1})\subset \J(L_{\{1\}},\sigma_\nu)$ and, more precisely, the non-reduced union
of all joins $\J(L_I,\sigma_t)$ such that $k_{I,\sigma_t}(\Sigma)>1$. 
In particular we compute the following multiplicities. 
For every $I$ and $t$ such that $|I|,t\ge0$,  we have 
\begin{equation}\label{mult Sigma odd}
k_{I,\sigma_t}(\Sigma)=\max\left\{0, \nu-|I|-t+1+\delta_I\right\},
\end{equation}
where $\delta_I$ is defined as
\begin{equation}\label{Kronecker}
\delta_I:=\delta_{1,I}=\left\{\begin{array}{ll}
1 & \textrm{if } 1\in I,\\
0 & \textrm{if } 1\notin I.\\
\end{array}\right.
\end{equation}
All subvarieties $\J(L_I,\sigma_t)$ such that $|I|+t\le \nu+\delta_I$ have been blown-up,
hence we conclude that $\pi^\sigma$ is a resolution of the singularities of $\Sigma$.
\end{proof}

Choose $\beta\in\mathbb{N}$ such that
\begin{equation}\label{definition beta}
\frac{k_C(D)}{\nu}
\le \beta\le \min_{1\le i\le 2\nu+4}\left\{\frac{m_1}{\nu+\delta_{\{i\}}},d-m_i\right\},
\end{equation}
where $\delta_{\{i\}}$ is the  Kronecker delta defined in \eqref{Kronecker}.
\begin{lemma}
Under the assumptions of Theorem \ref{abundance Q}, such an integer $\beta$ exists.
\end{lemma}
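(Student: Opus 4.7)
The plan is to mirror, with the obvious modifications, the argument that established the analogous existence of $\alpha$ in the even case. It suffices to verify, under the assumptions of Theorem \ref{abundance Q}, that the interval
\[
\Big[\,\frac{k_C(D)}{\nu},\ \min_{1\le i\le 2\nu+4}\Big\{\frac{m_i}{\nu+\delta_{\{i\}}},\,d-m_i\Big\}\,\Big]
\]
has length at least $1$, so it contains an integer. Equivalently, I will show that
\[
\frac{k_C(D)}{\nu}\le \frac{m_i}{\nu+\delta_{\{i\}}}-1
\quad\text{and}\quad
\frac{k_C(D)}{\nu}\le d-m_i-1,\qquad \forall\,1\le i\le 2\nu+4.
\]

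First I would recall from Lemma \ref{tildeD intersect exc} and its proof that $\epsilon\, k_C(D)\le n-3 = 2\nu-2$. Multiplying out, the first desired inequality reads
\[
\epsilon\,(\nu+\delta_{\{i\}})\,k_C(D) + \epsilon\,\nu(\nu+\delta_{\{i\}}) \le \epsilon\,\nu\, m_i .
\]
Using $\epsilon\,k_C(D)\le 2\nu-2$ and the hypothesis $\epsilon\, m_i\ge n-1=2\nu$ from \eqref{abundance condition}, this reduces to the numerical condition $\nu(\nu+\delta_{\{i\}})\epsilon\le 2$, which is ensured by $\epsilon\le \tfrac{2}{\nu(\nu+1)}$, i.e.\ by taking $\epsilon\ll 1$ as assumed in Theorem \ref{abundance Q n+3}. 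The worst case is $i=1$, where $\delta_{\{1\}}=1$; for $i\ne 1$ the verification is identical to that in the even case and only requires $\epsilon\le 2/\nu$. For the second inequality, I would use \eqref{extra nef condition}, which gives $\epsilon(d-m_i)\ge 2$, hence $\epsilon\,\nu(d-m_i-1)\ge 2\nu-\epsilon\nu\ge 2\nu-2\ge\epsilon\,k_C(D)$, again for $\epsilon$ sufficiently small.

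The two displays together show that the lower and upper bounds defining $\beta$ differ by at least $1$, so a non-negative integer $\beta$ satisfying \eqref{definition beta} exists. The only mild subtlety compared to the even case is keeping track of the extra $+1$ in the denominator at $i=1$ coming from the fact that the vertex $L_{\{1\}}$ of the cone $\Gamma=\J(L_{\{1\}},\sigma_\nu)$ is a singular point of $\Gamma$ of multiplicity $\nu+1$, which strengthens the required control on $m_1$; this is precisely what forces the use of $\delta_{\{i\}}$ in the denominator and is handled cleanly by the bound $\epsilon\le 2/(\nu(\nu+1))$. No deep estimate is needed beyond this, so I expect the proof to be a short computational check rather than to contain a genuine obstacle.
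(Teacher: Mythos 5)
Your proposal is correct and follows essentially the same route as the paper: both arguments reduce existence of $\beta$ to the two inequalities $\frac{k_C(D)}{\nu}\le \frac{m_i}{\nu+\delta_{\{i\}}}-1$ and $\frac{k_C(D)}{\nu}\le d-m_i-1$, verified via $\epsilon k_C(D)\le 2\nu-2$ from \eqref{low mult of joins}, $\epsilon m_i\ge n-1$ from \eqref{abundance condition}, and $\epsilon(d-m_i)\ge 2$ from \eqref{extra nef condition}, under the same smallness bound $\epsilon\le \frac{2}{\nu(\nu+1)}=\frac{8}{n^2-1}$. Your silent replacement of $m_1$ by $m_i$ in the upper bound of \eqref{definition beta} is the natural reading of what is evidently a typo in the paper.
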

\begin{proof}
For every $i\in\{1,\dots, 2\nu+4\}$, we compute
$$
\epsilon k_C(D)\le 2\nu-2\le 2\frac{\nu^2}{\nu+\delta_{\{i\}}}-\epsilon\nu\le 
\epsilon\nu\left(\frac{m_1}{\nu+\delta_{\{i\}}}-1\right).
$$
In the above expression, 
the first inequality follows from \eqref{low mult of joins}. The second inequality follows from
the assumption    $\epsilon\ll1$; in fact it is enough to take 
 $\epsilon\le\frac{8}{n^2-1}=\frac{2}{\nu^2+\nu}$.
The last inequality follows from 
\eqref{abundance condition}. Furthermore we have
$$
\epsilon k_C(D)\le 2\nu-2\le 2\nu-\epsilon\nu\le \epsilon \nu(d-m_i-1).
$$
The last inequality follows from \eqref{extra nef condition}.
\end{proof}

Consider the linear system 
$$
|D'|:=|D-\beta \Gamma|=|d'H-\sum_{i=1}^{2\nu+4}m'_iE_i|,
$$
with $d':=d-\beta(\nu+1)$,   $m'_1:=m_1-\beta(\nu+1)$ and $m'_i:=m_i-\beta\nu$,
 for all $i\in\{2\dots,2\nu+4\}$.
We have
$$
|D'|+\beta\Gamma\subseteq|D|. 
$$

\begin{proposition}\label{Delta' smooth odd}
In the above notation, the  linear system $|D'|$ is non-empty.
Moreover $|D'|$ has only linear base locus and  
 the strict transform in $Y^\sigma$ of $|D'|$  is base point free.
\end{proposition}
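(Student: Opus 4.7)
The plan is to mirror the argument of Proposition \ref{Delta' smooth even}, with the secant variety $\Sigma$ replaced by the pointed cone $\Gamma$ and with careful tracking of the asymmetric role played by the distinguished point $p_1$.

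First, I would verify non-emptiness of $|D'|$ via the effectivity criterion of \cite[Theorem 5.1]{bdp3}, by checking $0\le m'_i\le d'$ and $k_C(D')=0$. Non-negativity $m'_i\ge 0$ follows from \eqref{definition beta}: $\beta\le m_1/(\nu+1)$ for $i=1$ and $\beta\le m_i/\nu$ for $i\ge 2$. The inequality $m'_i\le d'$ reduces to $d\ge m_1$ for $i=1$ (automatic from effectivity of $D$) and to $\beta\le d-m_i$ for $i\ge 2$ (again from \eqref{definition beta}). A direct computation gives $\sum_i m'_i - n d' = (\sum_i m_i - nd) - \beta\nu$, so the lower bound $\beta\ge k_C(D)/\nu$ forces $k_C(D')=\max\{0,k_C(D)-\beta\nu\}=0$.

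Next, I would prove that $k_I(D')=0$ for every $I$ with $|I|\ge\nu+2$; once this is established, the linear cycles $L_I$ with $2\le|I|\le\nu+1$ belong to $\mathcal{C}^{\textrm{odd}}$ and so are blown up in $Y^\sigma$, and condition \eqref{gg equations Xr} for $r=\nu$ is exactly this vanishing. A short computation gives
\begin{equation*}
\sum_{i\in I} m'_i - (|I|-1)d' = \Bigl(\sum_{i\in I}m_i-(|I|-1)d\Bigr) + \beta\bigl(|I|-\nu-1-\delta_I\bigr),
\end{equation*}
with $\delta_I$ the Kronecker delta from \eqref{Kronecker}. Choose $J\subset I$ with $|J|=\nu+1$, requiring $1\in J$ whenever $1\in I$, and decompose
\begin{equation*}
\sum_{i\in I} m_i-(|I|-1)d = \Bigl(\sum_{i\in J}m_i-\nu d\Bigr) + \sum_{i\in I\setminus J}(m_i-d).
\end{equation*}
The first summand is $\le 0$ because \eqref{low mult of joins} applied to the linear cycle $L_J$ of dimension $\nu$ gives $\epsilon k_J\le\max\{0,n-1-2\nu\}=0$, hence $k_J=0$; the second is $\le -\beta(|I|-\nu-1)$ because $d-m_i\ge\beta$ by \eqref{definition beta} and $|I\setminus J|=|I|-\nu-1$. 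Combining yields $\sum_{i\in I} m'_i-(|I|-1)d' \le -\beta\delta_I\le 0$, so $k_I(D')=0$.

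Finally, because $k_C(D')=0$ forces $\sum_i m'_i\le nd'$, a one-line check shows $k_{I,\sigma_t}(D')=0$ for every $t\ge 1$: together with $m'_i\le d'$ and $t\ge 1$, the estimate $t\sum_i m'_i + \sum_{i\in I} m'_i - ((n+1)t+|I|-1)d'\le (1-t)d'\le 0$ holds. Thus the base locus of $D'$ is purely linear, contained in the cycles $L_I$ with $|I|\le\nu+1$ that are blown up in $Y^\sigma$, and Theorem \ref{gg theorem Xr} gives base point freeness of the strict transform of $D'$ in $Y^\sigma$. The main technical obstacle is the asymmetry at $p_1$: because $\Gamma$ has multiplicity $\nu+1$ at $p_1$ versus $\nu$ at the other points, the correction $\beta(|I|-\nu-1-\delta_I)$ changes sign depending on whether $1\in I$, and forcing $1\in J$ whenever $1\in I$ is precisely what makes the splitting absorb the extra Kronecker contribution.
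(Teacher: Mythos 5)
Your proposal is correct and follows essentially the same route as the paper: effectivity via $0\le m'_i\le d'$ and $k_C(D')=k_C(D)-\beta\nu\le 0$, then $k_I(D')=0$ for $|I|\ge\nu+2$ by splitting off a subset $J$ of cardinality $\nu+1$ with $1\in J$ whenever $1\in I$ and bounding the two pieces by \eqref{low mult of joins} and \eqref{definition beta}, and finally Theorem \ref{gg theorem Xr}. The only differences are cosmetic: you carry the Kronecker term $\delta_I$ through a single unified computation (the paper treats $1\in I$ and leaves $1\notin I$ to the reader), and you verify $k_{I,\sigma_t}(D')=0$ for $t\ge1$ directly, where the paper deduces "only linear base locus" from $k_C(D')=0$ via \cite[Lemma 4.1]{bdp3}.
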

\begin{proof}
By \eqref{definition beta}, we have $0\le m'_i\le d$ for all $i\in\{1,\dots, 2\nu+4\}$.
Moreover, as in the proof of Proposition \ref{Delta' smooth even}, we compute 
$$
k_C(D')=\max\left\{0, \epsilon\left(\sum_{i=1}^{2\nu+4}m'_i-(2\nu+1)d'\right)
\right\}=\max\left\{0,k_C(D)-\beta\nu\right\}=0.
$$
This proves the first statement.

To prove the second statement, 
we notice that if $|I|\le\nu+1$ the pair $(I,0)\in\mathcal{C}^{\textrm{odd}}$, while 
 we claim that $k_I(D')=0$ if $|I|\ge\nu+2$. This and Theorem \ref{gg theorem Xr}
imply the second statement.
We prove the claim for $1\in I$; the case $1\notin I$ is similar and we leave the details to the reader.
 Choose $J\subset I$ with $|J|=\nu+1$ and $1\in J$. Let us 
 compute

\begin{align*}
\epsilon k_{I}(D)&=\max\left\{0, \epsilon\left(\sum_{i\in I}m_i-(|I|-1)d\right)+
\epsilon\beta(|I|-\nu-2)\right\}\\
&=\max\left\{0,\epsilon\left(\sum_{i\in J}m_i-(\nu)d\right)+
\epsilon\left(\sum_{i\in i\setminus J}(m_i-d)\right)+\epsilon \beta(|I|-\nu-2)\right\}\\
&\le \max\{0,-\epsilon \beta\}\\ &=0.
\end{align*}
The inequality holds thanks to \eqref{low mult of joins} and
 \eqref{definition beta}.
\end{proof}

\begin{proposition}\label{linear series of strict transform odd}
In the above notation, there exist non-negative numbers
 $\beta_{I,\sigma_t}\in\mathbb{Z}$ such that 
$$
|\tilde{D'}^\sigma|+\beta\tilde{\Gamma}^\sigma+\sum_{(I,t)\in\mathcal{C}^{\textrm{odd}}}
\beta_{I,\sigma_t} E_{I,\sigma_t}\subseteq 
|\tilde{D}^\sigma|.
$$
\end{proposition}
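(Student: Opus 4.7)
The plan is to follow the scheme of the even-dimensional case (Proposition \ref{linear series of strict transform even}), substituting the cone $\Gamma = \J(L_{\{1\}}, \sigma_\nu)$ for $\Sigma$ and using the odd-dimensional data encoded in \eqref{definition beta} and \eqref{mult Sigma odd}. The first step is to pull back the linear equivalence $D \sim D' + \beta \Gamma$ from $X$ to $Y^\sigma$. Expressing each of $\tilde{D}^\sigma$, $\tilde{D'}^\sigma$, $\tilde{\Gamma}^\sigma$ via the formula \eqref{D tilde sigma} and cancelling the $H$-part and the $\{-m_i E_i\}$ components on both sides yields
\begin{equation*}
\tilde{D}^\sigma \sim \tilde{D'}^\sigma + \beta \tilde{\Gamma}^\sigma + \sum_{(I,t) \in \mathcal{C}^{\textrm{odd}}} \beta_{I,\sigma_t}\, E_{I,\sigma_t},
\end{equation*}
with $\beta_{I,\sigma_t} := k_{I,\sigma_t}(D') + \beta\, k_{I,\sigma_t}(\Gamma) - k_{I,\sigma_t}(D)$. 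Here the index set restricts to $\mathcal{C}^{\textrm{odd}}$ because each exceptional divisor contributing nontrivially to any of the three strict transforms arises from a center already blown up in the construction of $Y^\sigma$: for $D$, by Lemma \ref{tildeD intersect exc}; for $D'$, by Proposition \ref{Delta' smooth odd}, which forces $k_{I,\sigma_t}(D')=0$ for $t\ge 1$ and constrains the linear part $k_I(D')$; for $\Gamma$, by \eqref{mult Sigma odd}, which gives $k_{I,\sigma_t}(\Gamma)>0$ only when $|I|+t\le \nu + \delta_I$, a condition placing $(I,t)$ in $\mathcal{C}^{\textrm{odd}}$.

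The second step is to verify $\beta_{I,\sigma_t} \ge 0$. The cleanest route is via a test element: for any sufficiently general $D_1 \in |D'|$, the sum $D_1 + \beta\Gamma$ lies in $|D|$, and by additivity of multiplicities together with generality of $D_1$, its multiplicity along $\J(L_I,\sigma_t)$ equals exactly $k_{I,\sigma_t}(D') + \beta\, k_{I,\sigma_t}(\Gamma)$. On the other hand, Lemma \ref{base locus lemma II} guarantees that every element of $|D|$ vanishes along $\J(L_I,\sigma_t)$ to order at least $k_{I,\sigma_t}(D)$. Comparing these bounds yields
\begin{equation*}
k_{I,\sigma_t}(D) \le k_{I,\sigma_t}(D') + \beta\, k_{I,\sigma_t}(\Gamma),
\end{equation*}
which is precisely $\beta_{I,\sigma_t} \ge 0$. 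The linear equivalence then translates into the claimed inclusion of linear series, since the right-hand side is a sum of effective classes.

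The main technical subtlety will be the bookkeeping forced by the asymmetry of $\Gamma$. Unlike $\Sigma$, which is symmetric in all $n+3$ points, the cone $\Gamma$ has augmented multiplicity along every join $\J(L_I,\sigma_t)$ meeting its vertex $L_{\{1\}}$, encoded by the Kronecker $\delta_I$ in \eqref{mult Sigma odd}. Ensuring that this asymmetric multiplicity pattern matches the defining condition of $\mathcal{C}^{\textrm{odd}}$ (which itself singles out $I=\emptyset$ and $I=\{1\}$), and tracking the $t=0$ linear contributions accurately against the explicit computation of $k_I(D')$ carried out in the proof of Proposition \ref{Delta' smooth odd}, is where the odd case genuinely diverges from the even one. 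Once this is handled, the remainder of the argument proceeds in parallel to the even case.
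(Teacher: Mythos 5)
Your proposal is correct, and its skeleton coincides with what the paper intends: the paper's own proof of this proposition is a one-line deferral to the even case (Proposition \ref{linear series of strict transform even}) together with Proposition \ref{Delta' smooth odd}, and that even-case argument is exactly your first step — pull back $D\sim D'+\beta\Gamma$ to $Y^\sigma$ and read off the exceptional coefficients $\beta_{I,\sigma_t}=k_{I,\sigma_t}(D')+\beta\, k_{I,\sigma_t}(\Gamma)-k_{I,\sigma_t}(D)$. Where you genuinely diverge is the non-negativity step. The paper establishes it numerically, via the pointwise inequality $k_{I,\sigma_t}(\alpha\Sigma)\ge k_{I,\sigma_t}(D)$ of \eqref{Sigma is more singular than Delta}, extracted from the multiplicity bounds \eqref{low mult of joins} and \eqref{mult Sigma}; in the odd case this would require redoing those estimates with the asymmetric formula \eqref{mult Sigma odd} and the Kronecker correction $\delta_I$. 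You instead argue via the containment of linear series: a general $D_1+\beta\Gamma$ with $D_1\in|D'|$ is a member of $|D|$, has multiplicity exactly $k_{I,\sigma_t}(D')+\beta\, k_{I,\sigma_t}(\Gamma)$ along $\J(L_I,\sigma_t)$ by additivity, and Lemma \ref{base locus lemma II} forces this to be at least $k_{I,\sigma_t}(D)$. This is cleaner and more robust — it sidesteps the $\delta_I$ bookkeeping entirely and proves the slightly weaker statement (non-negativity of the sum rather than of $\beta\, k_{I,\sigma_t}(\Gamma)-k_{I,\sigma_t}(D)$ alone), which is all that is needed. One small inaccuracy in your justification: it is not literally true that $k_{I,\sigma_t}(\Gamma)>0$ forces $(I,t)\in\mathcal{C}^{\textrm{odd}}$ (e.g.\ $|I|=1$ with $1\notin I$ and $t=\nu-1$ satisfies $|I|+t\le\nu$ but violates $2r_{I,\sigma_t}\le 2\nu$ once $\nu\ge 3$); this is harmless, because the sum over $\mathcal{C}^{\textrm{odd}}$ is by construction the full list of exceptional divisors existing on $Y^\sigma$, so no further contributions can arise, but the stated reason should be replaced by this observation.
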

\begin{proof}
The proof follows the same lines as that of Proposition 
 \ref{linear series of strict transform even} and it uses Proposition \ref{Delta' smooth odd}. We leave the details to the reader. 
\end{proof}

\begin{proof}[Proof of Theorem \ref{base point free tilde-sigma-D}, case $n$ odd]
The proof follows the same idea as that of the case $n$ even, at the end of Section
\ref{proof base point freeness D case even}.

\end{proof}

\subsubsection{The pair $(X,\epsilon D)$ is lc, for $D$ general}

Notice that  the canonical divisor of $Y^\sigma=X^\sigma_{n+3,(n-2)}$ is
$$
K_{Y^\sigma}=-(n+1)H+(n-1)\sum E_i+\sum_{r=1}^{n-2}(n-r-1)
\sum_{\substack{I,t:\\ r_{I,\sigma_t}=r}}E_{I,\sigma_t}.
$$

We are now ready to prove Theorem \ref{abundance Q} for $s=n+3$.

\begin{proof}[Proof of Theorem \ref{abundance Q}, case $s=n+3$]
By Corollary \ref{general tildesigma smooth},
$(Y^\sigma,\tilde{\Delta})$ 
is log smooth and $\pi:Y\to X$ is a log resolution of $(X,\Delta)$.

To complete the proof, 
similarly to the case of only linearly obstructed divisors, 
we are going to show that \eqref{abundance condition} implies 
 
\begin{equation}\label{lc' condition}
  \epsilon k_{I,\sigma_t}  \le n-|I|-2t+1, 
\quad  \forall I(r), \ 2\le r_{I,\sigma_t}\le n-2,
\end{equation}
that in turns implies that $\textrm{discrep}(X,\Delta)\ge-1$.
This follows from the inequalities \eqref{low mult of joins} computed in the proof of  Proposition \ref{tildeD intersect exc}.
\end{proof}


\end{document}